\documentclass{article}
\usepackage{amsmath,amssymb}
\usepackage{amsopn,amsthm}
\usepackage{subcaption}
\usepackage{fullpage}
\usepackage{graphicx}
\usepackage{color}
\usepackage{psfrag}
\usepackage{tikz}
\usepackage{paralist}

\usepackage{mathrsfs} 
\usepackage{bm}
\usepackage{cases}

\usepackage{algpseudocode} 
\usepackage{algorithm}
\floatstyle{plain} \newfloat{myalgo}{tbhp}{mya}

\usepackage[a]{esvect}
\usepackage{float}

\newenvironment{Algorithm}[1][tbh]%
{\begin{myalgo}[#1]
    \centering
    \begin{minipage}{0.9\textwidth}
      \begin{algorithm}[H]}%
      {\end{algorithm}
    \end{minipage}
  \end{myalgo}}

\usepackage[colorlinks=true]{hyperref}
\hypersetup{urlcolor=blue, citecolor=blue, linkcolor=blue}

\newtheorem{theorem}{Theorem}[section]
\newtheorem{proposition}[theorem]{Proposition}
\newtheorem{lemma}[theorem]{Lemma}

\theoremstyle{definition}

\newtheorem{example}[theorem]{Example}

\theoremstyle{remark} \newtheorem{remark}[theorem]{Remark}

\numberwithin{equation}{section}
\numberwithin{figure}{section}
\numberwithin{algorithm}{section}

\definecolor{darkgreen}{rgb}{0.0, 0.5, 0.0}

\newcommand{\field}[1]{{\mathbb{#1}}}
\newcommand{\C}{\field{C}}
 
\newcommand{\N}{\field{N}}
\newcommand{\R}{\field{R}}

\newcommand{\G}{\mathbb{G}}
\newcommand{\I}{\mathbb{I}}
\newcommand{\M}{\mathbb{M}}

\newcommand{\Ocal}{\mathcal{O}}
\newcommand{\Pcal}{\mathcal{P}}

\newcommand{\bs}{\boldsymbol} 

\newcommand{\bfb}{{\bs b}}
 
\newcommand{\bfd}{{\bs d}}
\newcommand{\bfe}{{\bs e}}

\newcommand{\bfh}{{\bs h}}
\newcommand{\bfn}{{\bs n}}
\newcommand{\bfp}{{\bs p}} 
 
\newcommand{\bfr}{{\bs r}}
\newcommand{\bft}{{\bs t}}

\newcommand{\bfv}{{\bs v}} 

\newcommand{\bfx}{{\bs x}}
\newcommand{\bfy}{{\bs y}}

\newcommand{\bfA}{{\bs A}}

\newcommand{\bfE}{{\bs E}}
\newcommand{\bfF}{{\bs F}}

\newcommand{\bfH}{{\bs H}}

\newcommand{\bfV}{{\bs V}}

\newcommand{\bfnu}{{\bs\nu}}

\newcommand{\bftheta}{{\bs\theta}}
\newcommand{\bfxi}{{\bs\xi}}

\newcommand{\bfDelta}{{\bs\Delta}}

\newcommand{\loc}{{\mathrm{loc}}}
\newcommand{\tor}{\mathrm{tor}}

\newcommand{\ol}[1]{\overline{#1}}

\newcommand{\tm}{\subseteq} 
\newcommand{\di}{\partial}
\newcommand{\trans}{{\top}}

\newcommand{\ds}{\, \dif s}

\newcommand{\dx}{\, \dif \bfx}

\newcommand{\dmu}{\,  \dif\mu}

\newcommand{\dez}{\, \dif(\eta,\zeta)}

\newcommand{\xhat}{\widehat{\bfx}}

\newcommand{\vtilde}{{\widetilde v}}
\newcommand{\wtilde}{{\widetilde w}}

\newcommand{\Atilde}{{\widetilde A}}

\newcommand{\gammatilde}{{\widetilde\gamma}}

\newcommand{\rmi}{\mathrm{i}}

\newcommand{\eps}{\varepsilon}

\newcommand{\Sone}{{S^1}}
\newcommand{\Stwo}{{S^2}}
\newcommand{\Rd}{{\R^3}}
\newcommand{\Rtwo}{{\R^2}}
\newcommand{\Cd}{{\C^3}}
\newcommand{\Rdd}{{\R^{3\times3}}}

\newcommand{\curl}{{\mathbf{curl}}}
\renewcommand{\div}{\mathrm{div}}
\newcommand{\grad}{\nabla}

\newcommand{\crossgrad}{\grad_{\eta,\zeta}'}
\newcommand{\crossdiv}{\div_{\eta,\zeta}'}

\newcommand{\Ei}{\bfE^i}
\newcommand{\Hi}{\bfH^i}

\newcommand{\Esrho}{\bfE^s_\rho}
\newcommand{\Hsrho}{\bfH^s_\rho}
\newcommand{\Etrho}{\bfE_\rho}
\newcommand{\Htrho}{\bfH_\rho}
\newcommand{\Einftyrho}{\bfE^\infty_\rho}

\newcommand{\Einftyrhotilde}{\widetilde{\bfE^\infty_{\rho}}}

\newcommand{\Esrhon}{\bfE^s_{\rho_n}}

\newcommand{\Einftyrhon}{\bfE^\infty_{\rho_n}}

\newcommand{\rhon}{{\rho_n}}

\newcommand{\Frho}{F_\rho}
\newcommand{\Trho}{T_\rho}

\newcommand{\BR}{{B_R(0)}}
\newcommand{\Br}{B_r'(0)}
\newcommand{\Brho}{B_{\rho}'(0)}
\newcommand{\Brhon}{B_{\rho_n}'(0)}
\newcommand{\Drho}{D_\rho}
\newcommand{\Drhon}{D_{\rho_n}}

\newcommand{\Omegar}{{\Omega_r}}

\newcommand{\Meps}{\M^\eps}
\newcommand{\Mmu}{\M^\mu}
\newcommand{\Mgamma}{\M^\gamma}

\newcommand{\meps}{m^\eps}
\newcommand{\mmu}{m^\mu}
\newcommand{\mgamma}{m^\gamma}

\newcommand{\Wrhonxi}{W^{(\bfxi)}_{\rhon}}
\newcommand{\Wrhonej}{W_{\rhon}^{(e_j)}}
\newcommand{\Wrhonxitilde}{\widetilde{W}^{(\bfxi)}_{\rhon}}

\newcommand{\wrhonxi}{w^{(\bfxi')}_{\rhon}}

\newcommand{\wrhonxitheta}{w^{(\Rthetarhon^{-1}\bfxi')}_{\rhon}}
\newcommand{\wrhonxithetatilde}{\widetilde{w}^{(\Rthetarhon^{-1}\bfxi')}_{\rhon}}

\newcommand{\frhon}{f_{\rhon}}

\newcommand{\grhon}{g_{\rhon}}
\newcommand{\hrho}{h_\rho}

\newcommand{\Rtheta}{R_\theta}
\newcommand{\Rthetaprime}{\frac{\di \Rtheta}{\di s}}

\newcommand{\thetarho}{{\theta}}
\newcommand{\thetarhon}{{\theta}}
\newcommand{\thetarhonprime}{\frac{\di\theta}{\di s}}

\newcommand{\Rthetarho}{R_\thetarho}
\newcommand{\Rthetarhon}{R_\thetarhon}

\newcommand{\Kext}{\Gamma}

\newcommand{\pK}{\bfp_\Kext}
\newcommand{\rK}{\bfr_\Kext}

\newcommand{\tK}{\bft_\Kext}
\newcommand{\nK}{\bfn_\Kext}
\newcommand{\bK}{\bfb_\Kext}

\newcommand{\tKprime}{\frac{\di\tK}{\di s}}
\newcommand{\nKprime}{\frac{\di\nK}{\di s}}
\newcommand{\bKprime}{\frac{\di\bK}{\di s}}

\newcommand{\tp}{\bft_\bfp}
\newcommand{\np}{\bfn_\bfp}
\newcommand{\bp}{\bfb_\bfp}

\newcommand{\JrK}{J_\Kext}

\newcommand{\rrhon}{r_{\rho_n}}
\newcommand{\wrho}{w_\rho}
\newcommand{\wrhotilde}{\widetilde{w}_\rho}

\newcommand{\kappamax}{\kappa_{\max}}

\newcommand{\tri}{\bigtriangleup}
\newcommand{\Ptri}{\Pcal_\tri}

\newcommand{\xvec}{\vv{\bfx}}
\newcommand{\RelDiff}{\mathtt{RelDiff}}
\newcommand{\smax}{s_{\mathrm{max}}}

\newcommand{\thetaprime}{\frac{\di\theta}{\di s}}

\DeclareMathOperator{\dif}{d\!}

\DeclareMathOperator{\diag}{diag}

\DeclareMathOperator{\supp}{supp}

\DeclareMathOperator{\argmin}{argmin}

\begin{document}

\title{An asymptotic representation formula for scattering
  by thin tubular structures and an application in inverse scattering} 
\author{Yves Capdeboscq\footnote{Universit\'e de Paris and Sorbonne
    Universit\'e, CNRS, Laboratoire Jacques-Louis Lions (LJLL),
    F-75006 Paris, France, 
    ({\tt yves.capdeboscq@u-paris.fr})}\,, 
  Roland Griesmaier\footnote{Institut
    f\"ur 
    Angewandte und Numerische Mathematik, Karlsruher Institut f\"ur
    Technologie, Englerstr.~2, 76131 Karlsruhe, Germany 
    ({\tt roland.griesmaier@kit.edu}).}
  and Marvin Kn\"oller\footnote{Institut
    f\"ur 
    Angewandte und Numerische Mathematik, Karlsruher Institut f\"ur
    Technologie, Englerstr.~2, 76131 Karlsruhe, Germany 
    ({\tt marvin.knoeller@kit.edu}).}
}
\date{\today}

\maketitle

\begin{abstract}
  We consider the scattering of time-harmonic electromagnetic waves by
  a penetrable thin tubular scattering object in three-dimensional
  free space. 
  We establish an asymptotic representation formula for the scattered
  wave away from the thin tubular scatterer as the radius
  of its cross-section tends to zero. 
  The shape, the relative electric permeability and the relative
  magnetic permittivity of the scattering object enter this
  asymptotic representation formula by means of the center curve of
  the thin tubular scatterer and two electric and magnetic
  polarization tensors. 
  We give an explicit characterization of these two three-dimensional
  polarization tensors in terms of the center curve and of the two
  two-dimensional polarization tensor for the cross-section of the
  scattering object. 
  As an application we demonstrate how this formula may be used to
  evaluate the residual and the shape derivative in an efficient
  iterative reconstruction algorithm for an inverse scattering problem
  with thin tubular scattering objects. 
  We present numerical results to illustrate our theoretical
  findings. 
\end{abstract}

{\small\noindent
  Mathematics subject classifications (MSC2010): 
  35C20, 	%
  (65N21,  %
  78A46) 	%
  \\\noindent 
  Keywords: Electromagnetic scattering, Maxwell's equations, thin
  tubular object, asymptotic analysis, polarization tensor, 
  inverse scattering
  \\\noindent
  Short title: Scattering by thin tubular structures
}

\section{Introduction}
\label{sec:Introduction}
In this work we study time-harmonic electromagnetic waves in three
dimensional free space that are being scattered by a thin tubular
object. 
We assume that this object can be described as a thin tubular
neighborhood of a smooth center curve with arbitrary, but fixed,
cross-section, possibly twisting along the center curve. 
Assuming that the electric permittivity and the magnetic permeability
of the medium inside this scatterer are real valued and
positive, we discuss an asymptotic representation formula for the
scattered field away from the thin tubular scattering object as the
radius of its cross-section tends to zero.
The goal is to describe the effective behaviour of the scattered field
due to a thin tubular scattering object. 
Our primary motivation is the application of this result to inverse
problems or shape optimization. 

Various low volume expansions for electrostatic potentials, as well as
elastic and electromagnetic fields are available in the literature, 
(see, e.g., \cite{AlbCap18,AmmKan03,AmmKanNakTan02,AmmVogVol01,BerBonFraMaz12,BerFra06,CapVog03a,CedMosVog98,DapVog17,FriVog89,Gri11}).
The framework we use in this work was first introduced in
\cite{CapVog03a,CapVog03b,CapVog06} for electrostatic potentials.
The very general low volume perturbation formula for time-harmonic
Maxwell's equations in bounded domains from \cite{AlbCap18,Gri11} can
be extended to the electromagnetic scattering problem in
unbounded free space as considered in this work using an integral
equation technique developed in \cite{AmmIakMos03,AmmVol05}. 
Applying this result to the special case of thin tubular scattering
objects, the first observation is that the scattered field away from
the scatterer converges to zero as the diameter of its cross-section
tends to zero. 
We consider the lowest order term in the corresponding asymptotic
expansion of the scattered field, which can be written as an integral
over the center curve of the thin tubular scattering object in terms
of (i) the dyadic Green's function of time-harmonic Maxwell's
equations in free space, (ii) the incident field, and (iii) two
effective polarization tensors. 
The range of integration and the electric and magnetic polarization
tensors are the signatures of the shape and of the material parameters
of the thin tubular scattering object in this lowest order term. 

The main contribution of this work is a pointwise characterization of
the eigenvalues and eigenvectors of these polarization tensors for
thin tubular scattering objects.
We show that in each point on the center curve the polarization
tensors have one eigenvector corresponding to the eigenvalue $1$ that
is tangential to the center curve.  
Since polarization tensors are symmetric $3\times3$-matrices, this
implies that there are other two eigenvectors perpendicular to the
center curve. 
We prove that in the plane spanned by these two eigenvectors 
the three-dimensional polarization tensors coincide with the
corresponding two-dimensional polarization tensors for the
cross-section of the thin tubular scattering object. 
This extends an earlier result from \cite{BerCapdeGFra09} for straight 
cylindrical scatterers with arbitrary cross-sections of small area. 
The asymptotic representation formula for the scattered field together
with this pointwise description of the polarization tensors yields an
efficient simplified model for scattering by thin tubular structures. 

For the special case, when the cross-section of the thin tubular
scattering object is an ellipse, explicit formulas for the
two-dimensional polarization tensors of the cross-section are
available, which then gives a completely explicit asymptotic
representation formula for the scattered field. 
We will exemplify how to use this asymptotic representation formula in
possible applications by discussing an inverse scattering problem
with thin tubular scattering objects with circular cross-sections. 
The goal is to recover the center curve of such a scatterer from far
field observations of a single scattered field. 
We make use the asymptotic representation formula to develop an
inexpensive iterative reconstruction scheme that does not require to 
solve a single Maxwell system during the reconstruction process. 
A similar method for electrical impedance tomography has been
considered in \cite{GriHyv11} (see also \cite{BerCapdeGFra09} for a
related inverse problem with thin straight cylinders).
Further applications of asymptotic representation formulas for
electrostatic potentials as well as elastic and electromagnetic fields
with thin objects in inverse problems, image processing, or shape
optimization can, e.g., be found in
\cite{AmmBerFra04,AmmBerFra06,BerGraMusSch14,Dap20,Gri10,ParLes09}. 

The outline of this paper is as follows.
After providing the mathematical model for electromagnetic scattering
by a thin tubular scattering object in the next section, we summarize
the results on the general asymptotic analysis from
\cite{AlbCap18,Gri11} for the special case of thin tubular scattering
objects in Section~\ref{sec:AsymptoticFormula}. 
In Section~\ref{sec:PolarizationTensor} we state and prove our main
theoretical result concerning the explicit characterization of the
polarization tensor of a thin tubular scattering object.
As an application of these theoretical results, we discuss an inverse
scattering problem with thin tubular scattering objects in
Section~\ref{sec:InverseProblem}, and in
Section~\ref{sec:NumericalResults} we provide numerical examples.

\section{Scattering by thin tubular structures}
\label{sec:Setting}
We consider time-harmonic electromagnetic wave propagation in the
unbounded domain $\Rd$ occupied by a homogenous 
\emph{background medium} with constant \emph{electric permittivity}
$\eps_0>0$ and constant \emph{magnetic permeability} $\mu_0>0$. 
Accordingly, the \emph{wave number} $k$ at \emph{frequency} $\omega>0$
is given by~$k=\omega\sqrt{\eps_0\mu_0}$, and an \emph{incident field}
$(\Ei,\Hi)$ is an entire solution to Maxwell's equations 
\begin{equation}
  \label{eq:MaxwellIncident}
  \curl\Ei - \rmi\omega\mu_0\Hi \,=\, 0 \,, \quad
  \curl\Hi + \rmi\omega\eps_0\Ei \,=\, 0 \qquad\text{in }\Rd \,.
\end{equation}

We assume that the homogeneous background medium is perturbed by a
thin tubular scattering object, which shall be given as follows.
Let $\BR\tm\Rd$ be a ball of radius $R>0$ centered at the origin, and
let $\Kext\Subset\BR$ be a simple (i.e., non-self-intersecting but
possibly closed) curve with $C^3$ parametrization by arc length
$\pK:(-L,L)\to\Rd$. 
Assuming that $\pK'(s)\times \pK''(s)\not=0$ for all $s\in (-L,L)$,
the \emph{Frenet-Serret frame} $(\tK,\nK,\bK)$ for $\Kext$ is defined
by 
\begin{equation}
  \label{eq:FrenetSerretFrame}
  \tK(s) \,:=\, \pK'(s) \,, \quad
  \nK(s) \,:=\, \frac{\pK''(s)}{|\pK''(s)|} \,, \quad
  \bK(s) \,:=\, \tK(s)\times \nK(s) \,, \qquad s\in(-L,L) \,.
\end{equation}
For any $\thetarho \in C^1([-L,L])$ let 
\begin{equation}
  \label{eq:DefRtheta}
  \Rthetarho(s) \,:=\,
  \begin{bmatrix}
    \cos(\thetarho(s)) & -\sin(\thetarho(s))\\
    \sin(\thetarho(s)) & \cos(\thetarho(s))
  \end{bmatrix}
  \in \R^{2\time 2} \,, \qquad s\in (-L,L) \,,
\end{equation}
be a two-dimensional parameter-dependent rotation matrix, 
which will be used to twist the cross-section around the curve
$\Kext$ while extruding it along the curve in the geometric 
description of the thin tubular scattering object. 

The tubular neighborhood theorem (see, e.g., 
\cite[Thm.~20, p.~467]{Spi79}) shows that there exists a radius $r>0$
sufficiently small such that the map
\begin{equation}
  \label{eq:LocalCoordsK}
  \rK: (-L,L)\times \Br \to \Rd \,, \quad 
  \rK(s,\eta,\zeta) \,:=\, \pK(s) 
  + \begin{bmatrix}
    \nK(s) & \bK(s)
  \end{bmatrix} 
  \Rthetarho(s)
  \begin{bmatrix}
    \eta\\\zeta  
  \end{bmatrix} \,,
\end{equation}
where $\Br\tm\Rtwo$ is the disk of radius $r$ centered at the origin,
defines a local coordinate system around $\Kext$.
We denote its range by
\begin{equation}
  \label{eq:DefOmegar}
  \Omegar
  \,:=\, \bigl\{ \rK(s,\eta,\zeta) \;\big|\; 
  s\in(-L,L) \,,\; (\eta,\zeta)\in \Br \bigr\} \,.
\end{equation}
Given $0<\ell<L$ and $0<\rho<r/2$ we consider a 
\emph{cross-section} $\Drho'\tm\Brho$ that is just supposed to be 
measurable, and accordingly we  define a 
\emph{thin tubular scattering object} by 
\begin{equation}
  \label{eq:DefDrho}
  \Drho 
  \,:=\, \bigl\{ \rK(s,\eta,\zeta) \;\big|\; 
  s\in(-\ell,\ell) \,,\; 
  (\eta,\zeta) \in \Drho' \bigr\} 
\end{equation}
(see Figure~\ref{fig:Sketch} for a sketch).
In the following we call
\begin{equation*}
  K \,:=\, \bigl\{ \pK(s) \;\big|\; s\in(-\ell,\ell) \bigr\}
\end{equation*}
the \emph{center curve} of $\Drho$, and the parameter $\rho$ is called
the \emph{radius} of the cross-section $\Drho'$ of $\Drho$, or
sometimes just the radius of $\Drho$. 

\begin{figure}[t]
  \centering
  \includegraphics[scale=0.5]{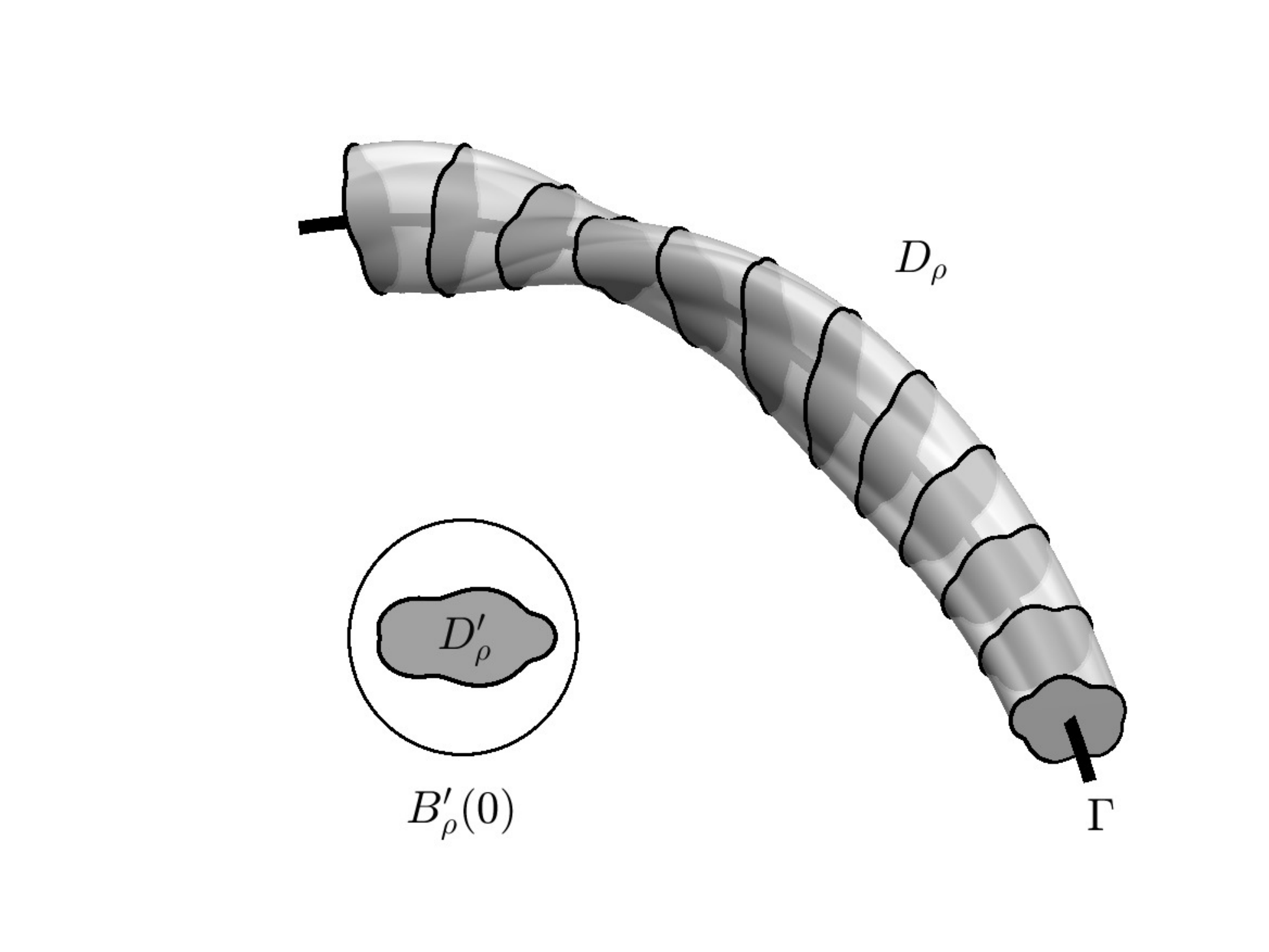}
  \caption{Sketch of a thin tubular scattering object $\Drho$ with
    cross-section $\Drho'\tm\Br$.} 
  \label{fig:Sketch}
\end{figure}

\begin{remark}
  The definition \eqref{eq:DefDrho} does not cover thin tubular
  scattering objects $\Drho$ with closed center curves~$K$. 
  However, the results established in the
  Theorems~\ref{thm:GeneralAsymptotics} and
  \ref{thm:CharacterizationPolTen} below remain valid in this case,
  and the proofs can actually be simplified because one does not have
  to take into account the ends of the tube.
  \hfill$\lozenge$
\end{remark}

We suppose that the medium inside the thin tubular scattering object
has constant electric permittivity $\eps_1>0$ and constant magnetic
permeability $\mu_1>0$. 
Accordingly, the permittivity and permeability distributions in the
entire domain are given by 
\begin{equation}
  \label{eq:Defeps/murho}
  \eps_\rho(\bfx) \,:=\,
  \begin{cases}
    \eps_1\,, & \bfx \in \Drho\,,\\
    \eps_0\,, & \bfx \in \Rd\setminus\overline{\Drho} \,,
  \end{cases}
  \qquad\text{and}\qquad
  \mu_\rho(\bfx) \,:=\,
  \begin{cases}
    \mu_1\,, & \bfx \in \Drho\,,\\
    \mu_0\,, & \bfx \in \Rd\setminus\overline{\Drho} \,.
  \end{cases}
\end{equation}
We also use the notation $\eps_r:=\eps_1/\eps_0$ and
$\mu_r:=\mu_1/\mu_0$ for the \emph{relative electric permittivity} and
the \emph{relative magnetic permeability}, respectively. 
The electromagnetic field $(\Etrho,\Htrho)$ in the perturbed medium 
satisfies 
\begin{equation}
  \label{eq:MaxwellTotal}
  \curl\Etrho - \rmi\omega\mu_\rho\Htrho \,=\, 0 \,, \quad
  \curl\Htrho + \rmi\omega\eps_\rho\Etrho \,=\, 0 \qquad\text{in }\Rd \,.
\end{equation}
Rewriting this \emph{total field} as a superposition 
\begin{equation*}
  (\Etrho,\Htrho)
  \,=\, (\Ei,\Hi)+(\Esrho,\Hsrho)
\end{equation*}
of the incident field $(\Ei,\Hi)$ and a \emph{scattered field}
$(\Esrho,\Hsrho)$, we assume that the scattered field satisfies the
Silver-M\"uller radiation condition 
\begin{equation}
  \label{eq:SilverMullerEsHs}
  \lim_{|\bfx|\to\infty} (\sqrt{\mu_0}\Hsrho(\bfx)\times \bfx 
  - |\bfx|\sqrt{\eps_0}\Esrho(\bfx)) 
  \,=\, 0 
\end{equation}
uniformly with respect to all directions 
$\xhat := \bfx/|\bfx| \in\Stwo$. 

In the following we will work with the electric field only.
Eliminating the magnetic field from the system
\eqref{eq:MaxwellIncident} gives  
\begin{subequations}
  \label{eq:ScatteringProblem}
  \begin{equation}
    \label{eq:MaxwellEi}
    \curl\curl\Ei - k^2 \Ei 
    \,=\, 0 \qquad\text{in }\Rd \,,
  \end{equation}
  while \eqref{eq:MaxwellTotal} reduces to 
  \begin{equation}
    \label{eq:MaxwellE}
    \curl \Bigl(\frac1{\mu_\rho} \curl\Etrho\Bigr) 
    - \omega^2 \eps_\rho \Etrho 
    \,=\, 0 \qquad\text{in }\Rd \,,
  \end{equation}
  and \eqref{eq:SilverMullerEsHs} turns into 
  \begin{equation}
    \label{eq:SilverMullerEs}
    \lim_{|\bfx|\to\infty} (\curl\Esrho(\bfx)\times \bfx 
    - \rmi k |\bfx|\Esrho(\bfx)) 
    \,=\, 0 \,.
  \end{equation}
\end{subequations}

\begin{remark}
  Throughout this work, Maxwell's equations are always to be
  understood in weak sense.
  For instance, $\Etrho\in H_\loc(\curl;\Rd)$ is a solution to
  \eqref{eq:MaxwellE} if and only if
  \begin{equation*}
    \int_\Rd \Bigl( \frac1{\mu_\rho}\curl\Etrho\cdot\curl\bfV 
    - \omega^2\eps_\rho\Etrho\cdot\bfV \Bigr) \dx
    \,=\, 0 \qquad
    \text{for all } \bfV\in H_0(\curl;\Rd) \,.
  \end{equation*}
  Standard regularity results yield smoothness of $\Etrho$ and
  $\Esrho$ in $\Rd\setminus\ol{\BR}$ for some $R>0$ sufficiently
  large, and the entire solution~$\Ei$ is smooth throughout~$\Rd$. 
  In particular the Silver-M\"uller radiation condition
  \eqref{eq:SilverMullerEs} is well defined.~\hfill$\lozenge$
\end{remark}

\begin{lemma}
  Suppose that the incident field $\Ei \in H_\loc(\curl;\Rd)$
  satisfies \eqref{eq:MaxwellEi}.  
  Then there exists a constant $\rho_0>0$ depending only on $R$,
  $\omega$, $\eps_0$ and $\mu_0$ such that for all
  $0<\rho<\rho_0$ the scattering problem 
  \eqref{eq:MaxwellE}--\eqref{eq:SilverMullerEs} has a unique solution
  $\Etrho \in H_\loc(\curl;\Rd)$. 
  Furthermore, the scattered field~$\Esrho$ has the asymptotic
  behaviour 
  \begin{equation*}
    \Esrho(\bfx) 
    \,=\, \frac{e^{\rmi k|\bfx|}}{4\pi|\bfx|} \bigl(\Etrho^\infty(\xhat) 
    + \Ocal(|\bfx|^{-1})\bigr) 
    \qquad \text{as } |\bf\bfx|\to\infty
  \end{equation*}
  uniformly in $\xhat = \bfx/|\bfx|$.
  The vector function $\Einftyrho$ is called the 
  \emph{electric far field pattern}. 
\end{lemma}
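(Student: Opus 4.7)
The plan is to follow the volume integral equation approach of \cite{AmmIakMos03,AmmVol05}. Denote by $\bfG_k$ the dyadic Green's function of Maxwell's equations in the background medium, characterized by $\curl\curl \bfG_k - k^2 \bfG_k = \delta_0 I$ together with the Silver--Müller radiation condition. Any radiating weak solution of \eqref{eq:MaxwellE} admits the Lippmann--Schwinger-type representation
\begin{equation*}
\Etrho(\bfx) \,=\, \Ei(\bfx)
+ k^2(\eps_r-1)\int_{\Drho}\bfG_k(\bfx,\bfy)\Etrho(\bfy)\dy
+ (1-1/\mu_r)\,\curl\int_{\Drho}\bfG_k(\bfx,\bfy)\curl\Etrho(\bfy)\dy
\end{equation*}
for $\bfx\in\Rd$. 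Restricting this identity to $\Drho$ yields an equation of the form $(I-\Trho)\Etrho=\Ei$ on $H(\curl;\Drho)$, and any solution of the integral equation extends by the same formula to a solution of \eqref{eq:MaxwellE}--\eqref{eq:SilverMullerEs} on all of $\Rd$.

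Next I would establish that the operator norm $\|\Trho\|$ on $H(\curl;\Drho)$ tends to zero as $\rho\to0$. The weakly singular part $k^2(\eps_r-1)\int_{\Drho}\bfG_k(\cdot,\bfy)\,(\cdot)\dy$ is easily seen to satisfy $\|\cdot\|\lesssim|\Drho|^{1/2}=O(\rho)$ by Young-type bounds, using that $\bfG_k$ is locally $L^1$. For the strongly singular $\curl\int_{\Drho}\bfG_k(\cdot,\bfy)\,(\cdot)\dy$ term one splits off the free (identity) term coming from the principal-value decomposition and invokes Calderón--Zygmund estimates; the thinness of $\Drho$ (cross-sectional area $O(\rho^2)$) combined with a scaling in the normal coordinates $(\eta,\zeta)$ from \eqref{eq:LocalCoordsK} provides an additional factor of $\rho^\alpha$ that makes the operator small. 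Together, there exists $\rho_0>0$ depending only on $R$, $\omega$, $\eps_0$, $\mu_0$ (and the material contrasts, which are fixed) such that $\|\Trho\|<1$ for all $0<\rho<\rho_0$. A Neumann series then gives existence and uniqueness of $\Etrho\in H(\curl;\Drho)$, and thus of $\Etrho\in H_\loc(\curl;\Rd)$ after extension. Uniqueness in $H_\loc(\curl;\Rd)$ follows separately from the usual Rellich--type argument for Maxwell scattering: the difference of two solutions is a radiating weak solution with vanishing contrast source, hence zero by unique continuation.

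Finally, the far-field asymptotics are read off the integral representation. One uses the standard expansion
\begin{equation*}
\bfG_k(\bfx,\bfy) \,=\, \frac{e^{\rmi k|\bfx|}}{4\pi|\bfx|}
\bigl((I-\xhat\xhat^{\trans})\,e^{-\rmi k\,\xhat\cdot\bfy}+\Ocal(|\bfx|^{-1})\bigr)
\qquad\text{as }|\bfx|\to\infty,
\end{equation*}
uniformly for $\bfy$ in the compact set $\ol{\Drho}$, and differentiates in $\bfx$ to obtain the corresponding expansion for $\curl_{\bfx}\bfG_k$. Substitution into the representation formula for $\Esrho$ and collection of the leading $|\bfx|^{-1}$ term yields the claimed asymptotic behaviour with $\Einftyrho(\xhat)$ given explicitly as an integral over $\Drho$ of the pair $(\Etrho,\curl\Etrho)$ against these far-field kernels.

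The main obstacle is the second step: bounding the strongly singular $\curl\!\int \bfG_k$ operator with a norm that vanishes as $\rho\to 0$. The kernel has critical homogeneity, so smallness cannot come from the kernel itself but must be extracted from the thinness of $\Drho$, which requires a careful scaling in the local tube coordinates $(s,\eta,\zeta)$ together with the Calderón--Zygmund bound on the principal-value operator, uniformly in the curvature data of $\Kext$ on the compact set $\ol{\BR}$.
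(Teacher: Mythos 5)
Your overall blueprint (volume integral equation of Lippmann--Schwinger type, followed by a far-field expansion of the kernel) is a reasonable starting point and is close in spirit to the approach of \cite{AmmIakMos03,AmmVol05}. It is, however, genuinely different from what the paper does for this particular lemma: the paper establishes well-posedness by the variational route, citing the G\aa rding-inequality/Fredholm machinery of \cite[Sec.~10.3]{Mon03} together with the unique continuation result of \cite[Thm.~3.1]{BalCapTse12} for uniqueness, and it reads off the far field from \cite[Cor.~9.5]{Mon03}. That route requires no smallness of $\rho$ at all --- it gives well-posedness for every admissible $\rho$ --- whereas your Neumann-series argument is tied to smallness.

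And it is precisely the smallness step where your proposal has a genuine gap. You need $\|\Trho\|<1$ on $H(\curl;\Drho)$, so you must also control $\curl$ applied to the right-hand side of your integral representation, i.e.\ the operator $\curl_x\curl_x\int_{\Drho}\Phi_k(\cdot-\bfy)\,\curl\Et(\bfy)\dy$. Using $\curl\curl=-\Delta+\grad\div$ and $-\Delta\Phi_k=\delta+k^2\Phi_k$, this operator equals the \emph{identity} plus weakly singular and boundary terms; consequently the $\mu$-contrast block of $\Trho$ contains the multiple $(1-1/\mu_r)\,\curl\Et$ with a coefficient that is \emph{independent of $\rho$}. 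Thinness of $\Drho$, rescaling in $(\eta,\zeta)$, and Calder\'on--Zygmund bounds cannot shrink this identity contribution; its $L^2$-norm is scale-invariant. In particular, if $\mu_r<1/2$ then $|1-1/\mu_r|>1$, and no amount of geometric smallness rescues the Neumann series. (Note also the side issue that $\bfG_k$ is \emph{not} locally $L^1$, because of the $\grad\div\Phi_k\sim|\bfx-\bfy|^{-3}$ part, so the claimed Young-type bound $\|\cdot\|\lesssim|\Drho|^{1/2}$ for the $\eps$-contrast block is unjustified as stated.) The correct way to get existence from the integral-equation formulation is not contraction but Fredholm of index zero plus uniqueness --- exactly what the paper's cited references supply, and what you would need to invoke here as well. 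The far-field step of your proposal is fine; the well-posedness step as written does not go through.
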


\begin{proof}
  The unique solvability follows, e.g., by combining the arguments in
  \cite[Sec.~10.3]{Mon03} with the uniqueness result
  \cite[Thm.~3.1]{BalCapTse12}.
  The far field expansion is, e.g., shown in \cite[Cor.~9.5]{Mon03}. 
\end{proof}

\section{The asymptotic perturbation formula}
\label{sec:AsymptoticFormula}
We derive an asymptotic perturbation formula for the scattered
electric field $\Esrho$ and the electric far field pattern
$\Einftyrho$ as the radius $\rho$ of the cross-section $\Drho'$ of the
scattering object $\Drho$ in \eqref{eq:DefDrho} tends to zero relative
to the wave length $\lambda=2\pi/k$. 
Expansions of this type are available in the literature for
time-harmonic electromagnetic fields (see, e.g.,
\cite{AlbCap18,AmmVogVol01,AmmVol05,Gri11}). 
However, the existing results for Maxwell's equations are either
formulated on bounded domains, or for scattering problems on unbounded
domains but with different geometrical assumptions on the scattering
objects than considered in this work.
In the following we combine a result for boundary value problems with
scatterers of very general geometries from \cite{AlbCap18,Gri11} and
an integral equation technique developed in
\cite{AmmIakMos03,AmmVol05} to arrive at an asymptotic perturbation
formula that applies to our setup. 

We consider a sequence of radii $(\rhon)_n\tm(0,r/2)$ converging to
zero, a sequence of measurable cross-sections $\Drhon'\tm\Brhon$,
$n\in\N$, and a two-dimensional parameter dependent rotation
matrix~$\Rtheta \in C^1([-L,L],\R^{2\times 2})$ as
in~\eqref{eq:DefRtheta}. 

Then, as $n\infty$,
\begin{equation}
  \label{eq:Defmu'}
  |\Drhon'|^{-1} \chi_{\Drhon'} \text{ converges in the sense of
    measures to } \mu'
\end{equation}
where $\mu'$ is the two-dimensional Dirac measure with support 
in $0$. 
Recalling \eqref{eq:FrenetSerretFrame} we denote
by~$\kappa(s):=|\pK''(s)|$ the \emph{curvature} of $\Kext$ and by 
$\tau(s):=-\bKprime(s)\cdot\nK(s)$ the \emph{torsion} of $\Kext$ at
$\pK(s)$. 
A short calculation (see Appendix~\ref{app:LocCoords}) shows
that the Jacobian determinant of the local coordinates $\rK$
from~\eqref{eq:LocalCoordsK} is given by 
\begin{equation}
  \label{eq:LocalCoordsKJacobian}
  \JrK(s,\eta,\zeta)
  \,:=\, \det D\rK(s,\eta,\zeta)
  \,=\, 1-\kappa \bfe_1' \cdot \Rthetarhon(s)
  \begin{bmatrix}
    \eta\\\zeta
  \end{bmatrix} \,, \qquad
  s\in(-L,L) \,,\; (\eta,\zeta)\in\Br \,,
\end{equation}
where $\bfe_1'=(1,0)^\trans\in\R^2$.
Since $\Kext$ is a $C^3$ curve, we have 
$\kappamax:=\|\kappa\|_{C(-L,L)}<\infty$, and it has, e.g., been shown
in \cite[Thm.~1]{LitSimDurRaw99} that the radius $r>0$ from
\eqref{eq:LocalCoordsK} must satisfy $r\kappamax<1$. 
In particular, $|\JrK|=\JrK>0$. 
Using the notation
\begin{equation*}
  \crossgrad u 
  \,:=\, \begin{bmatrix}
    \frac{\di u}{\di\eta} &
    \frac{\di u}{\di\zeta}
  \end{bmatrix}^\trans
  \qquad\text{and}\qquad
  \crossdiv \bfv
  \,:=\, \frac{\di\bfv_\eta}{\di\eta} + \frac{\di\bfv_\zeta}{\di\zeta}
\end{equation*}
for the two-dimensional gradient and the two-dimensional divergence
with respect to $(\eta,\zeta)$, we obtain (see
Appendix~\ref{app:LocCoords} for details) that the three-dimensional
gradient satisfies, for $s\in(-L,L)$ and $(\eta,\zeta)\in\Br$, 
\begin{multline}
  \label{eq:LocalCoordsKGradient}
  \grad u(\rK(s,\eta,\zeta))\\
  \,=\, \JrK^{-1}(s,\eta,\zeta) \biggl(
  \frac{\di u}{\di s} 
  + \Bigl(\tau+\thetaprime\Bigr)(s)
  \begin{bmatrix}
    \zeta \\ -\eta
  \end{bmatrix} \cdot \crossgrad u \biggr)\tK(s)
  + \begin{bmatrix}
    \nK(s) & \bK(s)
  \end{bmatrix}
  \Rthetarhon(s) 
  \crossgrad u \,.
\end{multline}

We note that
\begin{equation*}
  |\Drhon|
  \,=\, \int_{-\ell}^\ell \int_{\Drhon'} \JrK(s,\eta,\zeta) \dez \ds
  \,=\, 2\ell|\Drhon'| \bigl( 1 + \Ocal( \kappamax\rhon ) \bigr) \,,
\end{equation*}
and accordingly we obtain from \eqref{eq:Defmu'} that, 
for any $\psi\in C(\ol{\BR})$,
\begin{equation*}
  \begin{split}
    \int_{\BR} \psi\, |\Drhon|^{-1} \chi_{\Drhon} \dx
    &\,=\, \frac{|\Drhon'|}{|\Drhon|} \int_{-\ell}^\ell
    \frac{1}{|\Drhon'|} \int_{\Br}
    \chi_{\Drhon'}(\eta,\zeta) \psi(\rK(s,\eta,\zeta)) 
    \JrK(s,\eta,\zeta) \dez \ds\\
    &\to \frac{1}{2\ell}
    \int_{-\ell}^\ell \int_{\Br}
    \psi(\rK(s,\eta,\zeta)) \dmu' \ds
    \,=\, \frac{1}{2\ell}
    \int_{-\ell}^\ell \psi(\pK(s)) \ds%
  \end{split}
\end{equation*}
as $n\to\infty$.
This means that
\begin{equation}
  \label{eq:Defmu1}
  |\Drhon|^{-1} \chi_{\Drhon} \text{ converges in the sense of
    measures to } \mu \text{ as } n\to\infty \,,
\end{equation}
where $\mu$ is the Borel measure given by
\begin{equation}
  \label{eq:Defmu2}
  \int_{\BR} \psi \dmu
  \,=\, \frac{1}{2\ell} \int_K \psi \ds
  \qquad \text{for any $\psi\in C(\ol{\BR})$} \,.
\end{equation}

The following theorem describes the asymptotic behavior of the
scattered electric field $\Esrhon$ and of the electric far field
pattern $\Einftyrhon$ as the radius $\rhon$ of the scattering object
$\Drhon$ tends to zero. 
The matrix function
\begin{equation*}
  \G(\bfx,\bfy) 
  \,:=\, \Phi_k(\bfx-\bfy)\I_3 
  + \frac{1}{k^2} \grad_\bfx \div_\bfx ( \Phi_k(\bfx-\bfy) \I_3 ) \,, \qquad
  \bfx\not=\bfy \,,
\end{equation*}
where $\I_3\in\Rd$ is the identity matrix and 
$\Phi_k(\bfx-\bfy) := e^{\rmi k |\bfx-\bfy|}/(4\pi|\bfx-\bfy|)$
denotes the fundamental solution of the Helmholtz equation, is called
the \emph{dyadic Green's function} for Maxwell's equations (see, e.g.,
\cite[p.~303]{Mon03}). 

\begin{theorem}
  \label{thm:GeneralAsymptotics}
  Let $K\Subset\BR$ be a simple $C^3$ center curve, and let $r>0$ such
  that the local parametrization in \eqref{eq:LocalCoordsK} is well
  defined. 
  Let $(\rhon)_n\tm (0,r/2)$ be a sequence of radii converging to
  zero, and let $(\Drhon')_n$ be a sequence of measurable
  cross-sections with $\Drhon'\tm\Brhon$ for all~$n\in\N$. 
  Suppose that $(\Drhon)_n\tm\BR$ is the corresponding sequence of
  thin tubular scattering objects as in~\eqref{eq:DefDrho}, where the
  cross-section twists along the center curve subject to a parameter
  dependent rotation matrix $\Rtheta \in C^1([-L,L],\R^{2\times 2})$. 
  Denoting by $(\eps_{\rhon})_n$ and $(\mu_{\rhon})_n$ permittivity and
  permeability distributions as in~\eqref{eq:Defeps/murho}, let
  $\Esrhon$ be the associated scattered electric field solving
  \eqref{eq:ScatteringProblem} for some incident electric field~$\Ei$.
  Then there exists a subsequence, also denoted by $(\Drhon)_n$, and
  matrix valued functions 
  $\Meps,\Mmu \in L^2(K,\Rdd)$ called 
  \emph{electric} and \emph{magnetic polarization tensors},
  respectively, such that
  \begin{multline}
    \label{eq:GenAsyEs}
    \Esrhon(\bfx) 
    \,=\, \frac{|\Drhon|}{2\ell} \biggl( 
    \int_K (\mu_r-1) \curl_x\G(\bfx,\bfy) \Mmu(\bfy) 
    \curl \Ei(\bfy) \ds(\bfy)\\
    +\int_K k^2 (\eps_r-1) \G(\bfx,\bfy) \Meps(\bfy)  
    \Ei(\bfy) \ds(\bfy) \biggr) 
    + o(|\Drhon|) \,, \qquad \bfx \in \Rd\setminus\ol{\BR} \,.
  \end{multline}
  Furthermore, the electric far field pattern satisfies
  \begin{multline}
    \label{eq:GenAsyEinfty}
    \Einftyrhon(\xhat) 
    \,=\,  \frac{|\Drhon|}{2\ell} \biggl(
    \int_K (\mu_r-1) \rmi k e^{-\rmi k \xhat\cdot \bfy}
    (\xhat\times \I_3) \Mmu(\bfy) \curl \Ei(\bfy) \ds(\bfy)\\
    +\int_K k^2 (\eps_r-1) e^{-\rmi k \xhat\cdot \bfy}
    \bigl(\xhat\times(\I_3\times \xhat)\bigr) 
    \Meps(\bfy) \Ei(\bfy) \ds(\bfy)
    \biggr)
    + o(|\Drhon|) \,, \qquad \xhat\in\Stwo \,.
  \end{multline}
  The subsequence $(\Drhon)_n$ and the polarization tensors $\Meps$
  and $\Mmu$ are independent of the incident electric field $\Ei$.
  The terms $o(|\Drhon|)$ in \eqref{eq:GenAsyEs} and
  \eqref{eq:GenAsyEinfty} are such that 
  $\|o(|\Drhon|)\|_{L^\infty(\di\BR)}/|\Drhon|$ and 
  $\|o(|\Drhon|)\|_{L^\infty(\Stwo)}/|\Drhon|$ converge to zero
  uniformly for all $\Ei$ satisfying $\|\Ei\|_{H(\curl;\BR)}\leq C$
  for some fixed $C>0$.
\end{theorem}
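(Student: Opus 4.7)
The plan is to reduce the free-space scattering problem to an equivalent boundary value problem on the large ball $\BR$ so that the general asymptotic framework of \cite{AlbCap18,Gri11} applies, and then pass to the limit using the measure convergence \eqref{eq:Defmu1}.

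First, using the integral equation technique of \cite{AmmIakMos03,AmmVol05}, I would rewrite \eqref{eq:ScatteringProblem} as a transmission problem on $\BR$ by introducing an electric-to-magnetic Calderón operator on $\di\BR$ that encodes the Silver--Müller radiation condition \eqref{eq:SilverMullerEs}. Since $\Drhon\Subset\BR$ for all $n$ sufficiently large and the Calderón operator does not depend on $\rhon$, the hypotheses of the general low-volume expansion for time-harmonic Maxwell's equations in bounded domains from \cite{AlbCap18,Gri11} are satisfied. That result supplies, after extraction of a subsequence, two matrix-valued polarization tensor densities $\Meps_0, \Mmu_0$ associated with the weak-$*$ limit of $|\Drhon|^{-1}\chi_{\Drhon}$ (which is exactly the measure $\mu$ in \eqref{eq:Defmu2}), together with an asymptotic formula of the form
\begin{equation*}
\Esrhon(\bfx) = |\Drhon|\biggl(\int_{\BR}(\mu_r-1)\curl_x\G(\bfx,\bfy)\Mmu_0(\bfy)\curl\Ei(\bfy)\dmu(\bfy) + \int_{\BR}k^2(\eps_r-1)\G(\bfx,\bfy)\Meps_0(\bfy)\Ei(\bfy)\dmu(\bfy)\biggr) + o(|\Drhon|),
\end{equation*}
valid for $\bfx\in\Rd\setminus\ol{\BR}$. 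The remainder in the general theorem is controlled by the $H(\curl;\BR)$-norm of the source (equivalently of $\Ei$), which yields the claimed uniformity statement.

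Next, I would specialize the measure $\mu$ to the line measure over $K$ using \eqref{eq:Defmu2}. This collapses the two bulk integrals over $\BR$ to line integrals over $K$, and it identifies $\Meps := \Meps_0|_K$ and $\Mmu := \Mmu_0|_K$ as elements of $L^2(K,\Rdd)$. This produces \eqref{eq:GenAsyEs}. Independence of the subsequence and of $\Meps, \Mmu$ from $\Ei$ is inherited from the same property in \cite{AlbCap18,Gri11}: the tensors are constructed from geometric correctors (oscillating test fields on $\Drhon$) that do not see the incident field.

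For the far field formula \eqref{eq:GenAsyEinfty} I would substitute into \eqref{eq:GenAsyEs} the standard asymptotic expansions
\begin{equation*}
\G(\bfx,\bfy) = \frac{e^{\rmi k|\bfx|}}{4\pi|\bfx|}\bigl(\xhat\times(\I_3\times\xhat)\bigr)e^{-\rmi k\xhat\cdot\bfy} + \Ocal(|\bfx|^{-2}),\quad \curl_x\G(\bfx,\bfy) = \frac{e^{\rmi k|\bfx|}}{4\pi|\bfx|}\rmi k(\xhat\times\I_3)e^{-\rmi k\xhat\cdot\bfy} + \Ocal(|\bfx|^{-2}),
\end{equation*}
uniformly for $\bfy$ in the compact set $K$, and read off the coefficient of $e^{\rmi k|\bfx|}/(4\pi|\bfx|)$. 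The decay of the $\Ocal(|\bfx|^{-2})$ terms and of the $o(|\Drhon|)$ remainder is uniform in $\xhat$, which gives the required convergence in $L^\infty(\Stwo)$.

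The main technical obstacle is not the form of the limit, which is essentially dictated by \eqref{eq:Defmu1}--\eqref{eq:Defmu2} together with the known general expansion, but rather verifying that the reformulation via the Calderón operator on $\di\BR$ preserves the uniformity of the remainder with respect to $\Ei$ in the $H(\curl;\BR)$-norm. This requires a stability estimate for the coupled integral equation that is independent of $\rhon$ for $\rhon$ small enough; such an estimate can be extracted from the same arguments that underlie the well-posedness statement preceding the theorem, combined with the continuity of the boundary integral operators involved in \cite{AmmIakMos03,AmmVol05}.
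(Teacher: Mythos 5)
Your proposal follows essentially the same route as the paper, whose proof of Theorem~\ref{thm:GeneralAsymptotics} is only a one-line sketch: reduce the free-space problem via the integral equation technique of \cite{AmmIakMos03,AmmVol05}, invoke the bounded-domain low-volume expansion of \cite{AlbCap18,Gri11} in place of \cite{AmmVogVol01}, and collapse the resulting bulk integrals to line integrals over $K$ using the measure convergence \eqref{eq:Defmu1}--\eqref{eq:Defmu2}, with the far field formula read off from the large-argument asymptotics of the dyadic Green's function. Your outline is a reasonable elaboration of that same strategy (including the correct identification of the main technical point, namely $\rhon$-uniform stability of the reformulated problem), so there is nothing to correct.
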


\begin{proof}%
  An analysis similar to \cite{AmmVol05,AmmIakMos03}, using the
  asymptotic perturbation formula for the Maxwell boundary value
  problem from \cite{AlbCap18,Gri11} instead of \cite{AmmVogVol01},
  and applying \eqref{eq:Defmu2} gives the result. 
\end{proof}

All components of the leading order terms in the asymptotic
representation formulas \eqref{eq:GenAsyEs}
and~\eqref{eq:GenAsyEinfty}, except for the polarization tensors 
$\Meps,\Mmu \in L^2(K,\Rdd)$, are either known explicitly
or can be evaluated straightforwardly. 
The polarization tensors are
defined as follows (see~\cite{CapVog03a,CapVog06,Gri11}). 
Let $\gamma \in \{\eps,\mu\}$.
For $\bfxi\in\Stwo$ and $n\in\N$ let $\Wrhonxi\in H^1_0(\BR)$ be
the \emph{corrector potentials} satisfying
\begin{equation}
  \label{eq:DefWrhoxi}
  \div\bigl(\gamma_{\rhon} \grad \Wrhonxi\bigr) 
  \,=\, -\div\bigl((\gamma_{\rhon}-\gamma_0) \bfxi\bigr) \quad 
  \text{in $\BR$} \,, \qquad 
  \Wrhonxi \,=\, 0 \quad \text{on $\di\BR$} \,.
\end{equation}
Then, considering the subsequence $(\Drhon)_n$ from
Theorem~\ref{thm:GeneralAsymptotics}, the polarization tensor $\Mgamma$
is uniquely determined by 
\begin{equation}
  \label{eq:PolTenWxiconst}
  \frac{1}{2\ell} \int_K \bfxi\cdot  \Mgamma \bfxi \psi \ds
  \,=\, \frac{1}{|\Drhon|} \int_{\Drhon} |\bfxi|^2 \psi \dx
  + \frac{1}{|\Drhon|} \int_{\Drhon} 
  \bigl(\bfxi\cdot\grad\Wrhonxi \bigr)\psi \dx
  + o(1)
\end{equation}
for all $\psi\in C(\ol{\BR})$ and any $\bfxi\in\Stwo$.
Similar notions of polarization tensors appear in various contexts.
The term was introduced by Polya, Schiffer and Szeg\"o
\cite{PolSze51,SchSze49}, and they have been widely studied in the
theory of homogenization as the low volume fraction limit of the
effective properties of the dilute two phase composites (see, e.g.,
\cite{Lip93,Mil02,MovSer97}). 
For the specific form considered here, it has been shown in
\cite{CapVog03a,CapVog06} that the values of the functions $\Mgamma$
are symmetric and positive definite in the sense that 
\begin{equation}
  \label{eq:MgammaSymmetry}
  \Mgamma_{ij}(\bfx) \,=\, \Mgamma_{ji}(\bfx) \qquad
  \text{for $1\leq i,j\leq 3$ and a.e.\ } \bfx\in K \,,
\end{equation}
and
\begin{equation}
  \label{eq:MgammaBounds}
  \min \Bigl\{ 1, \frac{\gamma_0(\bfx)}{\gamma_1(\bfx)} \Bigr\}
  \,\leq\, \bfxi \cdot \Mgamma(\bfx) \bfxi 
  \,\leq\, \max \Bigl\{ 1, \frac{\gamma_0(\bfx)}{\gamma_1(\bfx)}
  \Bigr\} \qquad
  \text{for every $\bfxi\in\Stwo$ and a.e.\ $\bfx\in K$} \,.
\end{equation}
Analytic expressions for $\Mgamma$ have been derived for several basic
geometries such as, e.g., when~$(\Drhon)_n$ is a family of
diametrically small ellipsoids (see \cite{AmmKan07}), a family of thin
neighborhoods of a hypersurface (see \cite{BerFraVog03}), or a family
of thin neighborhoods of a straight line segment (see
\cite{BerCapdeGFra09}).  
In the next section we extend the result from \cite{BerCapdeGFra09} to 
thin neighborhoods of smooth curves  of $(\Drhon)_n$ as in
\eqref{eq:DefDrho}, and we derive a spectral representation of the
polarization tensor in terms of the center curve $K$ and the
two-dimensional polarization tensor of the cross-sections
$(\Drhon')_n$. 
In \cite{BerCapdeGFra09,Dap20,GriHyv11}, the authors expressed
interest such a characterization of the polarisation tensor for thin
tubular objects for various applications. 
Therewith, the leading order terms in the asymptotic representation
formulas \eqref{eq:GenAsyEs} and \eqref{eq:GenAsyEinfty} can be evaluated
very efficiently. 

\begin{remark}
  \label{rem:SmallerDomain}
  The characterization of the polarization tensor 
  $\Mgamma\in L^2(K;\Rdd)$ in \eqref{eq:PolTenWxiconst}
  remains valid when the domain $\BR$ in \eqref{eq:DefWrhoxi} is
  replaced by $\Omegar$ from \eqref{eq:DefOmegar} (see
  \cite[Rem.~1]{CapVog06}). 
  The regularity results that are used in the proof of
  \cite[Lmm.~1]{CapVog06} are applicable because $\Omegar$ is $C^2$
  away from the ends of the tube and convex in a neighborhood of the
  ends of the tube. 
  This will be used in Section~\ref{sec:PolarizationTensor} below.
  \hfill$\lozenge$
\end{remark}

\section{The polarization tensor of a thin tubular scattering object}
\label{sec:PolarizationTensor}
Let $\gamma\in\{\eps,\mu\}$.
We assign a two-dimensional polarization tensor 
${\mgamma\in\R^{2\times 2}}$ to the sequence of cross-sections
$(\Drhon')_n$ of the scattering objects $(\Drhon)_n$ as follows. 
Let
\begin{equation*}
  \gamma_{\rhon}'(\eta,\zeta) \,:=\,
  \begin{cases}
    \gamma_1\,, & (\eta,\zeta) \in \Drhon'\,,\\
    \gamma_0\,, & (\eta,\zeta) \in \Br\setminus\overline{\Drhon'} \,,
  \end{cases}
\end{equation*}
i.e., $\gamma_{\rhon}'$ is just the electric permittivity or the
magnetic permeability distribution associated to the cross-section
$\Drhon'$. 
For each $\bfxi'\in \Sone$ we denote by $\wrhonxi \in H^1_0(\Br)$ the 
unique solution to 
\begin{subequations}
  \label{eq:Defwrhoxi}
  \begin{align}
    \crossdiv \bigl( \gamma_{\rhon}' \crossgrad \wrhonxi \bigr) 
    &\,=\, -\crossdiv\bigl((\gamma_1-\gamma_0) 
      \chi_{\Drhon'} \bfxi'\bigr) &&
                                     \text{in $\Br$} \,, \\
    \wrhonxi &\,=\, 0 && \text{on $\di\Br$} \,,
  \end{align}
\end{subequations}
and accordingly we define $\mgamma\in\R^{2\times 2}$ (possibly up to
extraction of a subsequence) by
\begin{equation}
  \label{eq:PolTen2D}
  \bfxi' \cdot \mgamma \bfxi' \, \psi(0)
  \,=\, \frac{1}{|\Drhon'|} \int_{\Drhon'} |\bfxi'|^2 \psi \dx'
  + \frac{1}{|\Drhon'|} \int_{\Drhon'} 
  \bigl(\bfxi'\cdot\crossgrad\wrhonxi\bigr)\psi \dx'
  + o(1)
\end{equation}
for all $\psi\in C(\ol{\Br})$ and any $\bfxi'\in\Sone$.

The following theorem is the main result of this section.

\begin{theorem}
  \label{thm:CharacterizationPolTen}
  Let $K\Subset\BR$ be a simple $C^3$ center curve, and let $r>0$ such
  that the local parametrization in \eqref{eq:LocalCoordsK} is well
  defined. 
  Let $(\rhon)_n\tm (0,r/2)$ be a sequence of radii converging to
  zero, and let $(\Drhon')_n$ be a sequence of measurable
  cross-sections with $\Drhon'\tm\Brhon$ for all~$n\in\N$. 
  Suppose that $(\Drhon)_n\tm\BR$ is the corresponding sequence of
  thin tubular scattering objects as in~\eqref{eq:DefDrho}, where the
  cross-section twists along the center curve subject to a parameter
  dependent rotation matrix $\Rtheta \in C^1([-L,L],\R^{2\times 2})$. 
  Denoting by $(\gamma_{\rhon})_n$ a parameter distribution as
  in~\eqref{eq:Defeps/murho}, let~$\Mgamma$ be the polarization tensor
  corresponding to the thin tubular scattering objects~$(\Drhon)_n$
  from~\eqref{eq:PolTenWxiconst} (defined possibly up to extraction of
  a subsequence). 
  Denoting by $\mgamma$ the polarization tensor corresponding to the
  cross-sections~$(\Drhon')_n$ from \eqref{eq:PolTen2D} (defined
  possibly up to extraction of a subsequence), the following pointwise 
  characterization of $\Mgamma$ holds for a.e.\ $s\in(-\ell,\ell)$:
  \begin{enumerate}[(a)]
  \item The unit tangent vector $\tK(s)$ is an eigenvector of the
    matrix $\Mgamma(\pK(s))$ corresponding to the eigenvalue $1$, i.e., 
    \begin{equation}
      \label{eq:CharacterizationPolTen(a)}
      \tK(s) \cdot \Mgamma(\pK(s)) \tK(s) \,=\, 1 \qquad
      \text{for a.e.\ $s\in(-\ell,\ell)$}\,.
    \end{equation}
  \item Let $\bfxi'\in \Sone$, and let $\bfxi\in C^1(K,\Stwo)$ be
    given by $\bfxi(s) :=
    \begin{bmatrix}
      \nK(s) & \bK(s)
    \end{bmatrix} \bfxi' \in \Stwo$ for
    all~${s\in(-\ell,\ell)}$. 
    Then,
    \begin{equation}
      \label{eq:CharacterizationPolTen(b)}
      \bfxi(s) \cdot \Mgamma(\pK(s)) \bfxi(s) 
      \,=\, \bfxi' \cdot 
      \bigl( \Rthetarhon(s) \mgamma 
      \Rthetarhon^{-1}(s)\bigr)\bfxi' \qquad 
      \text{for a.e.\ $s\in(-\ell,\ell)$}\,.
    \end{equation}
  \end{enumerate}
\end{theorem}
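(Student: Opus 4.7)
The plan is to pull back the 3D corrector $\Wrhonxi$ from \eqref{eq:DefWrhoxi} to the curvilinear coordinates $\rK$ of \eqref{eq:LocalCoordsK}, compare it slice-by-slice in the arc-length parameter $s$ to the 2D corrector $\wrhonxi$ from \eqref{eq:Defwrhoxi}, and read off the pointwise identities by inserting test functions $\psi$ concentrated near Lebesgue points of $\Mgamma$. Define $W_n(s,\eta,\zeta):=\Wrhonxi(\rK(s,\eta,\zeta))$ on $(-L,L)\times\Br$ and, for a constant $\bfxi\in\Stwo$, set $\bfeta(s):=\Rthetarhon^{-1}(s)\begin{bmatrix}\nK(s) & \bK(s)\end{bmatrix}^{\trans}\bfxi\in\R^2$, i.e., the cross-sectional projection of $\bfxi$ expressed in the rotated basis. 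The gradient formula \eqref{eq:LocalCoordsKGradient} yields the pointwise decomposition
\begin{equation*}
  \bfxi\cdot\grad\Wrhonxi(\rK(s,\eta,\zeta))
  \,=\, (\bfxi\cdot\tK(s))\JrK^{-1}\Bigl(
  \tfrac{\di W_n}{\di s}
  +(\tau+\thetaprime)(s)
  \bigl(\zeta\tfrac{\di W_n}{\di\eta}-\eta\tfrac{\di W_n}{\di\zeta}\bigr)
  \Bigr)
  + \bfeta(s)\cdot\crossgrad W_n \,.
\end{equation*}
Combined with $|\Drhon|=2\ell|\Drhon'|(1+\Ocal(\kappamax\rhon))$ and the volume element $\JrK\,ds\,d\eta\,d\zeta$, the identity \eqref{eq:PolTenWxiconst} reduces to identifying the $\rhon\to 0$ limit of $|\Drhon'|^{-1}\int_{\Drhon'}\bfeta(s)\cdot\crossgrad W_n(s,\cdot,\cdot)\,d\eta\,d\zeta$ slice-wise in $s$, and verifying that the tangential contribution (the first term above) is negligible.

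The key intermediate step, and the main technical obstacle, is a slice-decoupling result: as $\rhon\to 0$, for a.e.\ $s\in(-\ell,\ell)$, the transverse behavior of $W_n(s,\cdot,\cdot)$ on $\Drhon'$ is asymptotically governed by the 2D corrector problem \eqref{eq:Defwrhoxi} with source direction $\bfeta(s)$. The heuristic is dimensional: after the rescaling $(\eta,\zeta)\mapsto(\rhon\eta,\rhon\zeta)$ onto a fixed cell, the pulled-back operator has principal part $\crossdiv(\gamma_{\rhon}'\crossgrad\,\cdot\,)$ at order $\rhon^{-2}$, while the $s$-derivatives and the curvature, torsion, and twist contributions from \eqref{eq:LocalCoordsKGradient} enter only at order $\rhon^{-1}$ or lower. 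Mechanistically, the source $-\div((\gamma_{\rhon}-\gamma_0)\bfxi)$ is a dipole layer on $\di\Drhon$ whose leading amplitude on the lateral surface is dictated by the cross-sectional projection $\bfeta(s)$, whereas the tangential component $\bfxi\cdot\tK$ activates the source only on the two endcaps, which carry surface measure of order $\rhon^2$ versus $\ell\rhon$ on the lateral boundary. Tracking this scaling, the tangential contribution in $|\Drhon'|^{-1}\int_{\Drhon'}\ldots$ is $\Ocal(\rhon)$, while \eqref{eq:PolTen2D}, applied with unit direction $\bfeta(s)/|\bfeta(s)|$ and scaled by $|\bfeta(s)|^2$ (and trivially when $\bfeta(s)=0$), yields
\begin{equation*}
  \frac{1}{|\Drhon'|}\int_{\Drhon'}\bfeta(s)\cdot\crossgrad
  W_n(s,\cdot,\cdot)\,d\eta\,d\zeta \;\longrightarrow\;
  \bfeta(s)\cdot\mgamma\bfeta(s)-|\bfeta(s)|^2 \qquad
  \text{for a.e.\ }s\,.
\end{equation*}
Localizing the corrector problem to $\Omegar$ via Remark~\ref{rem:SmallerDomain} and then running a rescaled energy argument (or a two-scale convergence argument) is the natural way to make this rigorous. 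The twist $\Rthetarhon(s)$ must be tracked carefully, as it is the rotation that converts the intrinsic 2D corrector $\mgamma$ into the conjugated tensor appearing in \eqref{eq:CharacterizationPolTen(b)}.

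Granting the slice-decoupling, substituting into \eqref{eq:PolTenWxiconst}, using \eqref{eq:Defmu2} for the $|\bfxi|^2=1$ contribution, and letting $\psi\in C(\ol{\BR})$ concentrate near Lebesgue points of $\Mgamma$ gives
\begin{equation*}
  \bfxi\cdot\Mgamma(\pK(s))\bfxi \,=\,
  (\bfxi\cdot\tK(s))^2+\bfeta(s)\cdot\mgamma\bfeta(s) \qquad
  \text{for a.e.\ }s\in(-\ell,\ell)
\end{equation*}
for every constant $\bfxi\in\Stwo$ (a countable dense subset of constant directions handles the joint a.e.\ statement, and continuity in $\bfxi$ yields the rest). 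Orthogonality of $\Rthetarhon(s)$ and orthonormality of the Frenet frame give $|\bfeta(s)|^2=(\bfxi\cdot\nK(s))^2+(\bfxi\cdot\bK(s))^2=1-(\bfxi\cdot\tK(s))^2$. Choosing $\bfxi=\tK(s_0)$ at a Lebesgue point $s_0$ forces $\bfeta(s_0)=0$ and yields \eqref{eq:CharacterizationPolTen(a)}. Choosing $\bfxi=\begin{bmatrix}\nK(s_0) & \bK(s_0)\end{bmatrix}\bfxi'$ with $\bfxi'\in\Sone$ forces $\bfxi\cdot\tK(s_0)=0$ and $\bfeta(s_0)=\Rthetarhon^{-1}(s_0)\bfxi'$; since $\Rthetarhon^{-\trans}=\Rthetarhon$, one obtains $\bfeta(s_0)\cdot\mgamma\bfeta(s_0)=\bfxi'\cdot(\Rthetarhon(s_0)\mgamma\Rthetarhon^{-1}(s_0))\bfxi'$, which is \eqref{eq:CharacterizationPolTen(b)}.
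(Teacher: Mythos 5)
Your overall strategy is the same as the paper's: work in the curvilinear coordinates $\rK$, split $\bfxi\cdot\grad\Wrhonxi$ via \eqref{eq:LocalCoordsKGradient} into a tangential part and a transverse part, show the tangential part contributes nothing at leading order, and identify the transverse part slice-wise with the two-dimensional corrector, the twist $\Rthetarhon$ producing the conjugation in \eqref{eq:CharacterizationPolTen(b)}. The final bookkeeping (orthogonality of $\Rthetarhon$, $|\bfeta(s)|^2=1-(\bfxi\cdot\tK(s))^2$, Lebesgue points) is fine. But the proposal has a genuine gap: the step you yourself label ``the key intermediate step, and the main technical obstacle'' --- the slice-decoupling statement that $\crossgrad W_n(s,\cdot,\cdot)$ behaves, in the limit, like the gradient of the 2D corrector with source direction $\bfeta(s)$ --- is only supported by a dimensional/scaling heuristic and a pointer to ``a rescaled energy argument (or two-scale convergence).'' That is precisely where essentially all of the work in the actual proof lies, and none of it is carried out: one must build an explicit ansatz of the form $\frhon\,\JrK^{-1}\,\wrhonxithetatilde$ in which the 2D problem is \emph{modified} by the matrix $\I_2+A_\tor$ from \eqref{eq:DefAtor} (the torsion-plus-twist term $\tau+\thetaprime$ does not disappear by scaling; it enters the transverse energy at the same order and must be absorbed into the coefficient), prove an energy estimate $\|\grad\Wrhonxi-\grad\Wrhonxitilde\|_{L^2(\Omegar)}=o(|\Drhon|^{1/2})$ with cut-off functions controlling the ends of the tube (Lemma~\ref{lmm:EstI2}, using \eqref{eq:Propertiesfrho}, \eqref{eq:Propertiesgrho} and the $s$-derivative bounds of Lemma~\ref{lmm:EstDerivativew}), and then remove the curvature and frozen-coefficient discrepancies between the modified and the genuine 2D corrector (the Caccioppoli-type Lemma~\ref{lmm:Caccioppoli} and the $v_1$ correction in \eqref{eq:PDEv}). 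Your dipole-layer argument for the tangential part is likewise only heuristic and, as stated, not quite accurate for a curved, twisting tube, where the lateral normal has an $\Ocal(\rhon(\tau+\thetaprime))$ tangential component; the paper instead kills this term by an integration by parts in $s$ combined with an interior $L^\infty$ bound on the corrector and the estimates \eqref{eq:Estwq1}--\eqref{eq:Estwq2}.

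A second, smaller issue is structural. You insert \emph{constant} directions $\bfxi\in\Stwo$ into \eqref{eq:PolTenWxiconst} and claim the mixed formula $\bfxi\cdot\Mgamma(\pK(s))\bfxi=(\bfxi\cdot\tK(s))^2+\bfeta(s)\cdot\mgamma\bfeta(s)$. For a general constant $\bfxi$ both the tangential and the transverse components of the source are active simultaneously with $s$-dependent weights, so your decoupling must also control the cross terms between the two contributions, which your scaling heuristic does not address. The paper avoids this by first proving Proposition~\ref{pro:PolTenWxi}, which extends \eqref{eq:PolTenWxiconst} to frame-adapted, non-constant fields $\bfxi\in C^1(\Omegar,\Stwo)$ (this extension itself requires the estimates \eqref{eq:PDErrho}--type arguments), and then treats the purely tangential choice $\bfxi=\tK$ and the purely transverse choice $\bfxi=[\,\nK\ \bK\,]\bfxi'$ separately. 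Either you prove an analogue of that proposition, or you must supply the cross-term estimates your stronger mixed identity requires; as written, neither is done.
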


Since the polarization tensor $\Mgamma(\pK(s))$ is symmetric, the
first part of the theorem implies that there are two more eigenvalues
in the plane orthogonal to $\tK(s)$, which is spanned by $\nK(s)$
and~$\bK(s)$. 
The second part of the theorem says that in this plane the
polarization tensor $\Mgamma(\pK(s))$ coincides with the polarization
tensor $\Rthetarhon(s) \mgamma \Rthetarhon^{-1}(s)$ of the twisted
two-dimensional cross-sections. 

The proof of Theorem~\ref{thm:CharacterizationPolTen} relies on the
following proposition, which extends the characterization of the 
polarization tensor $\Mgamma$ in \eqref{eq:PolTenWxiconst} from
constant vectors $\bfxi\in\Stwo$ to vector-valued functions 
$\bfxi\in C^1(\Omegar,\Stwo)$. 

\begin{proposition}
  \label{pro:PolTenWxi}
  Let $\bfxi\in C^1(\Omegar,\Stwo)$, and denote by 
  $\Wrhonxi \in H^1_0(\Omegar)$ the corresponding solution
  to~\eqref{eq:DefWrhoxi}. 
  Then the polarization tensor $\Mgamma$ satisfies 
  \begin{equation*}
    \frac{1}{2\ell} \int_K \bfxi\cdot \Mgamma \bfxi \psi \ds
    \,=\, \frac{1}{|\Drhon|} \int_{\Drhon} |\bfxi|^2 \psi \dx
    + \frac{1}{|\Drhon|} \int_{\Drhon} 
    \bigl(\bfxi\cdot\grad\Wrhonxi\bigr)\psi \dx
    + o(1)
  \end{equation*}
  for all $\psi\in C(\ol{\Omegar})$.
\end{proposition}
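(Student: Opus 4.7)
The plan is to reduce the statement to the known constant-vector identity \eqref{eq:PolTenWxiconst} via a partition of unity argument, exploiting the linearity of the map $\bfxi \mapsto \Wrhonxi$. Fix $\psi \in C(\ol{\Omegar})$ and $\delta > 0$. By the uniform continuity of the $C^1$ vector field $\bfxi$ on $\ol{\Omegar}$, I would choose a finite partition of unity $\{\phi_j\}_{j=1}^J \subset C_c^\infty(\Omegar)$ subordinate to a cover of $\ol{\Omegar}$ by sets of diameter at most $\delta$ (so $J = O(1/\delta)$), together with constants $\bfxi_j \in \Stwo$ satisfying $\|\bfxi - \bfxi_j\|_{L^\infty(\supp \phi_j)} \leq C\delta$. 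Applying \eqref{eq:PolTenWxiconst} (in the form of Remark~\ref{rem:SmallerDomain}, with $\Omegar$ replacing $\BR$) to each constant $\bfxi_j$ with test function $\phi_j \psi$ and summing over $j$, the identities $\sum_j \phi_j \equiv 1$ and $|\bfxi_j| = |\bfxi| = 1$ together with the $L^\infty$-bound on $\Mgamma$ from \eqref{eq:MgammaBounds} imply that the left-hand side equals $\frac{1}{2\ell}\int_K \bfxi \cdot \Mgamma \bfxi\, \psi \ds + O(\delta)$ and the first right-hand-side term equals $\frac{1}{|\Drhon|}\int_{\Drhon} |\bfxi|^2 \psi \dx$ exactly.

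The central task is then to compare
\begin{equation*}
S_n \,:=\, \sum_j \frac{1}{|\Drhon|} \int_{\Drhon} (\bfxi_j \cdot \grad W_{\rhon}^{(\bfxi_j)}) \phi_j \psi \dx
\quad\text{with}\quad
T_n \,:=\, \frac{1}{|\Drhon|} \int_{\Drhon} (\bfxi \cdot \grad \Wrhonxi) \psi \dx.
\end{equation*}
The Lax--Milgram energy estimate $\|\grad W_{\rhon}^{(\bfeta)}\|_{L^2(\Omegar)} \leq C \|\bfeta\|_\infty |\Drhon|^{1/2}$ combined with Cauchy--Schwarz over the partition of unity (needed to absorb the growth $J = O(1/\delta)$) allows one to replace $\bfxi_j$ by $\bfxi$ inside $S_n$ with an error of order $\delta J^{1/2} = O(\delta^{1/2})$. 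Linearity of the corrector map gives $\sum_j W_{\rhon}^{(\phi_j \bfxi_j)} = W_{\rhon}^{(\sum_j \phi_j \bfxi_j)}$, and stability of the defining PDE (with $\|\sum_j \phi_j \bfxi_j - \bfxi\|_\infty \leq C\delta$) yields $\|W_{\rhon}^{(\sum_j \phi_j \bfxi_j)} - \Wrhonxi\|_{H^1_0(\Omegar)} = O(\delta |\Drhon|^{1/2})$, which controls the replacement of $\Wrhonxi$ by $\sum_j W_{\rhon}^{(\phi_j \bfxi_j)}$ inside $T_n$ with an error of order $\delta$.

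I expect the hardest step to be the commutator between multiplication by $\phi_j$ and the corrector map, i.e.\ comparing $\sum_j \phi_j \grad W_{\rhon}^{(\bfxi_j)}$ with $\sum_j \grad W_{\rhon}^{(\phi_j \bfxi_j)}$. The algebraic identity $\grad(\phi_j W_{\rhon}^{(\bfxi_j)}) = \phi_j \grad W_{\rhon}^{(\bfxi_j)} + W_{\rhon}^{(\bfxi_j)} \grad \phi_j$, combined with the telescoping $\sum_j \grad \phi_j \equiv 0$ (from $\sum_j \phi_j \equiv 1$), reduces the commutator to terms of the form $W_{\rhon}^{(\bfxi_j - \bfxi_{j_0})} \grad \phi_j$ with $\bfxi_{j_0}$ a locally fixed reference. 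Because $\grad \phi_j$ is supported only in the overlap of $\phi_j$ with its finitely many neighbors (on which $|\bfxi_j - \bfxi_{j_0}| \leq C\delta$), the Poincar\'e bound $\|W_{\rhon}^{(\bfxi_j - \bfxi_{j_0})}\|_{L^2(\Omegar)} = O(\delta |\Drhon|^{1/2})$ balances $\|\grad \phi_j\|_\infty = O(1/\delta)$, producing an $o(1)$ contribution to $T_n - S_n$ after the $1/|\Drhon|$ rescaling. Collecting all the error contributions yields the claim up to $O(\delta^{1/2}) + o(1)$; sending first $n \to \infty$ and then $\delta \to 0$ completes the proof.
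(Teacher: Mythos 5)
Your plan takes a genuinely different route from the paper (which decomposes $\bfxi=\sum_{i=1}^3\xi_i\bfe_i$ in the \emph{fixed} Cartesian basis, applies \eqref{eq:PolTenWxiconst} to the pairs $(\bfe_i,\bfe_j)$ with test functions $\xi_i\xi_j\psi$, and then devotes the whole proof to showing $\|\grad\Wrhonxi-\sum_j\xi_j\grad\Wrhonej\|_{L^2(\Omegar)}=o(|\Drhon|^{\frac12})$ by an elliptic decomposition into three remainder problems), but as written it has two genuine gaps. First, the quantitative bookkeeping in your comparison of $S_n$ with $T_n$ does not close: a cover of the three-dimensional set $\ol{\Omegar}$ by sets of diameter $\delta$ has $J\sim\delta^{-3}$ patches, not $J=O(1/\delta)$, so the error you obtain from replacing $\bfxi_j$ by $\bfxi$ against $\grad W_{\rhon}^{(\bfxi_j)}$ is $O(\delta J^{1/2})=O(\delta^{-1/2})$, which diverges rather than vanishes. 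The obstruction is structural: the only tool you invoke is the global energy bound $\|\grad W_{\rhon}^{(\bfxi_j)}\|_{L^2(\Omegar)}\le C|\Drhon|^{1/2}$, which cannot localize the corrector's energy to the patch $\supp\phi_j$, so Cauchy--Schwarz over the $J$ patches necessarily pays the factor $J^{1/2}$. Repairing this needs extra input you do not supply (e.g.\ local energy/Caccioppoli bounds for the correctors, or a one-dimensional slab partition along the tube combined with the observation that only the values of $\bfxi$ on $\Drhon$ enter the data in \eqref{eq:DefWrhoxi}, valid once $\rhon\lesssim\delta$). The paper sidesteps the issue entirely: its three coefficient functions $\xi_i$ are fixed $C^1$ functions with bounded gradients, so no $1/\delta$ ever appears.

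Second, your treatment of the ``hardest step'' is not a proof of it. The product rule $\grad(\phi_jW_{\rhon}^{(\bfxi_j)})=\phi_j\grad W_{\rhon}^{(\bfxi_j)}+W_{\rhon}^{(\bfxi_j)}\grad\phi_j$ together with $\sum_j\grad\phi_j\equiv0$ only handles the term $\sum_jW_{\rhon}^{(\bfxi_j)}\grad\phi_j$; it says nothing about the remaining difference between the localized corrector $\phi_jW_{\rhon}^{(\bfxi_j)}$ and the corrector of the localized data $W_{\rhon}^{(\phi_j\bfxi_j)}$, and these are not equal. Their difference solves an elliptic problem whose right-hand side contains, besides $\div(\gamma_{\rhon}W_{\rhon}^{(\bfxi_j)}\grad\phi_j)$, also the sources $\gamma_{\rhon}\grad W_{\rhon}^{(\bfxi_j)}\cdot\grad\phi_j$ and $(\gamma_{\rhon}-\gamma_0)\bfxi_j\cdot\grad\phi_j$ --- exactly the analogues of the three remainder problems \eqref{eq:PDErrho1}--\eqref{eq:PDErrho3} in the paper --- and estimating them is the analytic core of the proposition, requiring \eqref{eq:Estwq2}, \eqref{eq:Estwq3}, Sobolev embedding and Poincar\'e, none of which appears in your sketch. (For fixed $\delta$ these contributions do carry positive powers of $|\Drhon|$ and would vanish as $n\to\infty$, so this part is fixable, but it must actually be carried out; asserting that the commutator ``reduces to terms of the form $W_{\rhon}^{(\bfxi_j-\bfxi_{j_0})}\grad\phi_j$'' mischaracterizes it.) A minor further slip: a partition of unity with $\phi_j\in C_c^\infty(\Omegar)$ cannot satisfy $\sum_j\phi_j\equiv1$ up to $\di\Omegar$; you should use smooth functions on $\ol{\Omegar}$ without the compact-support requirement. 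The steps that are sound are the $O(\delta)$ treatment of the left-hand side via \eqref{eq:MgammaBounds}, the exact identity for the zeroth-order term, the stability estimate $\|\grad(W_{\rhon}^{(\sum_j\phi_j\bfxi_j)}-\Wrhonxi)\|_{L^2(\Omegar)}\le C\delta|\Drhon|^{\frac12}$ from \eqref{eq:Estwq1}, and the order of limits ($n\to\infty$ first, then $\delta\to0$), since $J$ is finite for fixed $\delta$.
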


\begin{proof}
  We denote by $(\bfe_1,\bfe_2,\bfe_3)$ the standard basis of $\Rd$,
  and we consider 
  $\bfxi = \sum_{i=1}^3 \xi_i \bfe_i \in C^1(\Omegar,\Stwo)$.
  Let $\Wrhonxi \in H^1_0(\Omegar)$ be the corresponding solutions
  to~\eqref{eq:DefWrhoxi}, and let $\Wrhonej$, $1\leq j\leq 3$, be the
  solutions to \eqref{eq:DefWrhoxi} with $\bfxi=\bfe_j$.  
  Then, using \eqref{eq:PolTenWxiconst} we find that
  \begin{equation*}
    \begin{split}
      &\frac{1}{2\ell} \int_K 
      \bfxi(\bfx)\cdot \Mgamma(\bfx) \bfxi(\bfx) \psi(\bfx) \ds(\bfx)
      \,=\, \sum_{i,j=1}^3 \frac{1}{2\ell} \int_K 
      \bfe_i\cdot  \Mgamma(\bfx) \bfe_j 
      \bigl( \xi_i\xi_j\psi \bigr)(\bfx) \ds(\bfx)\\
      &\,=\, \sum_{i,j=1}^3 \frac{1}{|\Drhon|} \int_{\Drhon} 
      \bfe_i\cdot  \bfe_j 
      \bigl( \xi_i\xi_j\psi \bigr)(\bfx) \dx
      + \sum_{i,j=1}^3 \frac{1}{|\Drhon|} \int_{\Drhon} 
      \bfe_i\cdot  \grad \Wrhonej(\bfx)
      \bigl( \xi_i\xi_j\psi \bigr)(\bfx) \dx
      + o(1)\\
   &\,=\, \frac{1}{|\Drhon|} \int_{\Drhon} |\bfxi(\bfx)|^2 \psi(\bfx) \dx
      + \frac{1}{|\Drhon|} \int_{\Drhon} \bfxi(\bfx)\cdot
      \grad \Wrhonxi(\bfx) \psi(\bfx) \dx\\
      &\phantom{\,=\,}
      - \frac{1}{|\Drhon|} \int_{\Drhon} \bfxi(\bfx)\cdot 
      \Bigl( \grad \Wrhonxi 
      - \sum_{j=1}^3\xi_j\grad \Wrhonej \Bigr)(\bfx) \psi(\bfx) \dx
      + o(1) \,.
    \end{split}
  \end{equation*}
  Applying H\"older's inequality gives
  \begin{equation}
    \label{eq:ProofPolTenWxi1}
    \biggl|
    \int_{\Drhon} \bfxi\cdot
    \Bigl( \grad \Wrhonxi 
    - \sum_{j=1}^3\xi_j\grad \Wrhonej \Bigr) \psi \dx
    \biggr|
    \,\leq\, C |\Drhon|^{\frac12} \Bigl\| \grad \Wrhonxi 
    - \sum_{j=1}^3\xi_j\grad \Wrhonej \Bigr\|_{L^2(\Omegar)} \,.
  \end{equation}
  To finish the proof, we show that the right hand side of
  \eqref{eq:ProofPolTenWxi1} is $o(|\Drhon|)$ as $n\to\infty$. 

  We note that \eqref{eq:DefWrhoxi} gives
  \begin{equation*}
    \begin{split}
      \div \Bigl( \gamma_{\rhon} \grad \Bigl( \Wrhonxi 
      - \sum_{j=1}^3\xi_j\Wrhonej \Bigr) \Bigr)
      &\,=\, -\div \bigl((\gamma_{\rhon}-\gamma_0)\bfxi\bigr)\\
      &\phantom{\,=\,}
      - \sum_{j=1}^3 
      \div\bigl(\bigl(\gamma_{\rhon}\nabla \Wrhonej\bigr)\xi_j\bigr)
      - \sum_{j=1}^3 \div\bigl(\gamma_{\rhon} \Wrhonej\nabla \xi_j\bigr) \,.
    \end{split}
  \end{equation*}
  Furthermore,
  \begin{equation*}
    \begin{split}
      \sum_{j=1}^3 \div\bigl(
      \bigl(\gamma_{\rhon}\nabla \Wrhonej\bigr)\xi_j\bigr)
      &\,=\, -\sum_{j=1}^3 \div 
      \bigl((\gamma_{\rhon}-\gamma_0)\bfe_j\bigr) \xi_j
      + \sum_{j=1}^3 \gamma_{\rhon} \grad \Wrhonej \cdot \nabla \xi_j \,,
    \end{split}
  \end{equation*}
  and rewriting $\bfxi=\sum_{j=1}^3 \xi_j\bfe_j$ we obtain that
  \begin{equation*}
    \div\bigl( (\gamma_{\rhon}-\gamma_0)\bfxi \bigr)
    \,=\, \sum_{j=1}^3 \div \bigl( (\gamma_{\rhon}-\gamma_0)
    \bfe_j \bigr) \xi_j 
    + \sum_{j=1}^3 (\gamma_{\rhon}-\gamma_0) \bfe_j\cdot \grad \xi_j \,.
  \end{equation*} 
  Accordingly, $\Wrhonxi - \sum_{j=1}^3\xi_j\Wrhonej \in H^1_0(\Omegar)$
  satisfies 
  \begin{equation*}
    \begin{split}
      \div \Bigl( \gamma_{\rhon} \grad \Bigl( \Wrhonxi 
      - \sum_{j=1}^3\xi_j\Wrhonej \Bigr) \Bigr)
      &\,=\, - \sum_{j=1}^3 (\gamma_{\rhon}-\gamma_0)
      \bfe_j\cdot \grad \xi_j
      - \sum_{j=1}^3 \gamma_{\rhon} \grad \Wrhonej \cdot \nabla \xi_j\\
      &\phantom{\,=\,}
      - \sum_{j=1}^3 \div\bigl(\gamma_{\rhon} \Wrhonej\nabla \xi_j\bigr) \,.
    \end{split}
  \end{equation*}

  Now let $\rrhon^{(1)},\rrhon^{(2)},\rrhon^{(3)}\in H^1_0(\Omegar)$ be
  the unique solutions to 
  \begin{subequations}
    \label{eq:PDErrho}
    \begin{align}
      \div(\gamma_{\rhon} \grad \rrhon^{(1)}) 
      &\,=\, -\sum_{j=1}^3 \div(\gamma_{\rhon} \Wrhonej\nabla\xi_j)
      &&\text{in $\Omegar$} \,, \qquad
      &&\rrhon^{(1)} 
         \,=\, 0 \qquad \text{on $\di\Omegar$} \,,\label{eq:PDErrho1}\\
      \div(\gamma_{\rhon} \grad \rrhon^{(2)}) 
      &\,=\, -\sum_{j=1}^3 \gamma_{\rhon} 
        \grad\Wrhonej \cdot \nabla\xi_j
      &&\text{in $\Omegar$} \,,  \qquad
      &&\rrhon^{(2)} 
         \,=\, 0 \qquad \text{on $\di\Omegar$} \,,\label{eq:PDErrho2}\\
      \div(\gamma_{\rhon} \grad \rrhon^{(3)}) 
      &\,=\, -\sum_{j=1}^3 (\gamma_{\rhon}-\gamma_0)
        (\bfe_j\cdot \grad\xi_j)
      &&\text{in $\Omegar$} \,,  \qquad
      &&\rrhon^{(3)} 
         \,=\, 0 \qquad \text{on $\di\Omegar$} \,.\label{eq:PDErrho3}
    \end{align}
  \end{subequations}
  The uniqueness of solutions to the Dirichlet problem implies that
  \begin{equation*}
    \Wrhonxi - \sum_{j=1}^3 \xi_j\Wrhonej 
    \,=\, \rrhon^{(1)} + \rrhon^{(2)} + \rrhon^{(3)} \,,
  \end{equation*}
  and we have the following estimates for 
  $\rrhon^{(1)}, \rrhon^{(2)}$, and $\rrhon^{(3)}$.
  Using the well-posedness of~\eqref{eq:PDErrho1}
  and~\eqref{eq:Estwq2} we find that 
  \begin{equation*}
    \begin{split}
      \|\rrhon^{(1)}\|_{H^1(\Omegar)}
      &\,\leq\, C \Bigl\| \sum_{j=1}^3 \gamma_{\rhon} \Wrhonej 
      \grad \xi_j \Bigr\|_{L^2(\Omegar)}
      \,\leq\, C \max_j\|\grad\xi_j\|_{L^\infty(\Omegar)}
      \|\Wrhonej\|_{L^2(\Omegar)}\\
      &\,\leq\, C \|\bfxi\|_{C^1(\Omegar)}
      |\Drhon|^{\frac34} \,.
    \end{split}
  \end{equation*}
  Similarly, using Poincare's inequality, the weak formulation of
  \eqref{eq:PDErrho2}, 
  H\"older's inequality, Sobolev's embedding theorem (see, e.g.,
  \cite[p.~158]{GilTru01}), and \eqref{eq:Estwq3} we obtain that
  \begin{equation*}
    \begin{split}
      &\|\rrhon^{(2)}\|_{H^1(\Omegar)}^2
      \,\leq\, C \Bigl| \int_\Omegar \gamma_{\rhon} \grad \rrhon^{(2)} 
      \cdot \grad \rrhon^{(2)} \dx \Bigr|
      \,=\, C \Bigl| \int_\Omegar \sum_{j=1}^3 \gamma_{\rhon} 
      \bigl( \grad\Wrhonej \cdot \nabla\xi_j \bigr) \rrhon^{(2)} 
      \dx \Bigr|\\
      &\,\leq\, C \Bigl\|\sum_{j=1}^3\gamma_{\rhon}
      \grad\Wrhonej\Bigr\|_{L^{\frac{6}{5}}(\Omegar)} 
      \max_j\|\nabla\xi_j\|_{L^\infty(\Omegar)} 
      \|\rrhon^{(2)}\|_{L^{6}(\Omegar)}\\
      &\,\leq\, C |\Drhon|^{\frac56}
      \|\bfxi\|_{C^1(\Omegar)} 
      \|\grad \rrhon^{(2)}\|_{L^2(\Omegar)}
      \,\leq\, C |\Drhon|^{\frac56}
      \|\bfxi\|_{C^{1}(\Omegar)} 
      \|\rrhon^{(2)}\|_{H^1(\Omegar)}\,.
    \end{split}
  \end{equation*}
  For the third term $\rrhon^{(3)}$ we note that, using Poincare's
  inequality, the weak formulation of \eqref{eq:PDErrho3}, H\"older's
  inequality, and Sobolev's embedding theorem, 
  \begin{equation*}
    \begin{split}
      \|\rrhon^{(3)}\|_{H^1(\Omegar)}^2
      &\,\leq\, C \Bigl| \int_\Omegar \gamma_{\rhon} 
      \grad\rrhon^{(3)}\cdot \grad\rrhon^{(3)} \dx \Bigr|
      \,=\, C \Bigl| \int_\Omegar (\gamma_{\rhon}-\gamma_0) 
      \sum_{j=1}^3(\bfe_j\cdot\grad\xi_j) \rrhon^{(3)} \dx \Bigr|\\
      &\,\leq\, C \|\gamma_{\rhon}-\gamma_0\|_{L^\frac65(\Omegar)} 
      \max_j\|\grad\xi_j\|_{L^\infty(\Omegar)}
      \|\rrhon^{(3)}\|_{L^6(\Omegar)}\\
      &\,\leq\, C |\Drhon|^{\frac56} 
      \|\bfxi\|_{C^1(\Omegar)}
      \|\grad\rrhon^{(3)}\|_{L^2(\Omegar)}
      \,\leq\, C |\Drhon|^{\frac56} 
      \|\bfxi\|_{C^1(\Omegar)}
      \|\rrhon^{(3)}\|_{H^1(\Omegar)} \,.
    \end{split}
  \end{equation*}
  Accordingly,
  \begin{equation*}
     \Bigl\| \Wrhonxi - \sum_{j=1}^3 \xi_j\Wrhonej \Bigr\|_{H^1(\Omegar)}
     \,\leq\, C \|\bfxi\|_{C^1(\Omegar)} |\Drhon|^{\frac34} \,,
   \end{equation*}
  and, using \eqref{eq:Estwq2},
  \begin{multline*}
    \Bigl\| \grad\Wrhonxi 
    - \sum_{j=1}^3 \xi_j\grad\Wrhonej \Bigr\|_{L^2(\Omegar)}
    \,\leq\, \Bigl\| \grad\Wrhonxi 
    - \grad\Bigl(\sum_{j=1}^3 \xi_j\Wrhonej\Bigr) \Bigr\|_{L^2(\Omegar)}
    + \Bigl\| \sum_{j=1}^3 \Wrhonej \grad\xi_j \Bigr\|_{L^2(\Omegar)}\\
    \,\leq\, C \|\bfxi\|_{C^1(\Omegar)} |\Drhon|^{\frac34}
    + \|\Wrhonej\|_{L^2(\Omegar)} \max_j\|\grad\xi_j\|_{L^\infty(\Omegar)}
    \,\leq\, C \|\bfxi\|_{C^1(\Omegar)} |\Drhon|^{\frac34} \,.
  \end{multline*}
\end{proof}

Next we prove the first part of
Theorem~\ref{thm:CharacterizationPolTen}. 

\begin{proof}[Proof of Theorem~\ref{thm:CharacterizationPolTen}(a)]
  Let $\bfxi\in C^1(\Omegar)$ be defined by $\bfxi(\bfx) := \tK(s)$ for
  any $\bfx = \rK(s,\eta,\zeta)\in\Omegar$.
  Using Proposition~\ref{pro:PolTenWxi} we find that, for any 
  $\psi\in C^1(\ol{\Omegar})$,
  \begin{multline}
    \label{eq:ProofChar1}
    \frac{1}{2\ell} \int_{-\ell}^{\ell} \tK(s) 
    \cdot \Mgamma(\pK(s)) \tK(s) \psi(\pK(s))
    \ds\\
    \,=\, \frac1{|\Drhon|}\int_{\Drhon} \psi(\bfx) \dx
    + \frac1{|\Drhon|}\int_{\Drhon} 
    \bigl( \bfxi(\bfx) \cdot \grad \Wrhonxi(\bfx) \bigr) 
    \psi(\bfx) \dx
    + o(1) \,.
  \end{multline}
  Working in local coordinates, recalling
  \eqref{eq:LocalCoordsKJacobian}--\eqref{eq:LocalCoordsKGradient}, 
  and integrating by parts we obtain that
  \begin{equation*}
    \begin{split}
      &\biggl| \int_{\Drhon} 
      \bigl( \bfxi(\bfx) \cdot \grad \Wrhonxi(\bfx) \bigr) 
      \psi(\bfx) \dx \biggr|\\
      &\,=\, \biggl| \int_{-\ell}^{\ell} \int_{\Drhon'} \!\!\!
      \Bigl( \frac{\Wrhonxi}{\di s}(\rK(s,\eta,\zeta))
      +\Bigl(\tau+\thetarhonprime\Bigr)(s) \!
      \begin{bmatrix}
        \zeta \\ -\eta
      \end{bmatrix} \cdot \crossgrad 
      \Wrhonxi(\rK(s,\eta,\zeta)) \Bigr) 
      \psi(\rK(s,\eta,\zeta)) \!\dez \!\ds \biggr|\\
      &\,\leq\, \biggl| \biggl[ 
      \int_{\Drhon'} \Wrhonxi(\rK(s,\eta,\zeta)) 
      \psi(\rK(s,\eta,\zeta)) \dez \biggr]_{s=-\ell}^{\ell} \biggr|\\
      &\phantom{\,=\,}
      + \biggl| \int_{\Drhon'} \int_{-\ell}^{\ell} \Wrhonxi(\rK(s,\eta,\zeta)) 
      \frac{\di }{\di s}\psi(\rK(s,\eta,\zeta)) \ds \dez \biggr|\\
      &\phantom{\,=\,}
      + \biggl| \int_{-\ell}^{\ell} \int_{\Drhon'} 
      \Bigl(\tau+\thetarhonprime\Bigr)(s) \begin{bmatrix}
        \zeta \\ -\eta
      \end{bmatrix} \cdot \crossgrad 
      \Wrhonxi(\rK(s,\eta,\zeta)) 
      \psi(\rK(s,\eta,\zeta)) \dez \ds  \biggr|\,.
    \end{split}
  \end{equation*}
  Using the interior regularity estimate \cite[Thm.~8.24]{GilTru01}
  and \eqref{eq:Estwq2} gives
  \begin{equation*}
    \|\Wrhonxi\|_{L^\infty(\Omegar)}
    \,\leq\, C \bigl( \|\Wrhonxi\|_{L^2(\Omegar)} 
    + \|(\gamma_{\rhon}-\gamma_0)\bfxi\|_{L^4(\Omegar)} \bigr)
    \,\leq\, C |\Drhon|^{\frac14} \,.
  \end{equation*}
  Therefore, using \eqref{eq:Estwq1}--\eqref{eq:Estwq2}, 
  \begin{equation*}
     \begin{split}
      &\biggl| \int_{\Drhon} 
      \bigl( \bfxi(\bfx) \cdot \grad \Wrhonxi(\bfx) \bigr) 
      \psi(\bfx) \dx \biggr|\\
      &\,\leq\, C |\Drhon'| \|\Wrhonxi\|_{L^\infty(\Drhon)} 
      + C |\Drhon|^{\frac12} \|\Wrhonxi\|_{L^2(\Drhon)} 
      + C \rhon |\Drhon|^{\frac12} 
      \|\grad \Wrhonxi\|_{L^2(\Omegar)}\\
      &\,\leq\, C |\Drhon| |\Drhon|^{\frac14}
      + C |\Drhon|^{\frac12} |\Drhon|^{\frac34}
      + C \rhon |\Drhon|^{\frac12} |\Drhon|^{\frac12} \
      \,=\, o(|\Drhon|) \,.
    \end{split}
  \end{equation*}

  Inserting this estimate into \eqref{eq:ProofChar1}, using
  \eqref{eq:Defmu1}--\eqref{eq:Defmu2}, and letting $n\to\infty$, we
  obtain that 
  \begin{equation*}
    \frac{1}{2\ell} \int_{-\ell}^{\ell} 
    \tK(s) \cdot \Mgamma(\pK(s)) \tK(s) \psi(\pK(s))
    \ds
    \,=\, \frac{1}{2\ell} \int_{-\ell}^{\ell} 
    \psi(\pK(s)) \ds \,.
  \end{equation*}
  Since $\psi\in C^1(\ol{\Omegar})$ was arbitrary, this implies
  \eqref{eq:CharacterizationPolTen(a)}. 

  Recalling the symmetry of $\Mgamma(\pK(s))$ in
  \eqref{eq:MgammaSymmetry} and the polarization tensor
  bounds~\eqref{eq:MgammaBounds} shows that~$1$~is either the
  maximal or minimal eigenvalue of $\Mgamma(\pK(s))$ for a.e.\
  $s\in(-\ell,\ell)$, and that $\tK(s)$ is the corresponding
  eigenvector. 
\end{proof}

Next we prove the second part of
Theorem~\ref{thm:CharacterizationPolTen}. 

\begin{proof}[Proof of Theorem~\ref{thm:CharacterizationPolTen}(b)]
  Let $\bfxi' \in \Sone$, and let $\bfxi\in C^1(\Omegar,\Rd)$ be
  defined by 
  \begin{equation*}
    \bfxi(\bfx) 
    \,:=\, 
    \begin{bmatrix}
      \nK(s) & \bK(s)
    \end{bmatrix}
    \bfxi'
    \qquad \text{for any } \bfx = \rK(s,\eta,\zeta)\in\Omegar \,,
  \end{equation*}
  and let $\Wrhonxi \in H^1_0(\Omegar)$ be the corresponding solution 
  to~\eqref{eq:DefWrhoxi}. 

  The main idea of this proof is to approximate $\grad\Wrhonxi$ by the
  gradient of a product of functions involving the solution 
  $\wrhonxitheta\in H^1_0(\Br)$ of \eqref{eq:Defwrhoxi} with $\bfxi'$ 
  replaced by $\Rtheta^{-1}\bfxi'$.
  To do so we first introduce a modified corrector potential
  $\wrhonxithetatilde\in H^1_0(\Br)$ as the unique solution to 
  \begin{subequations}
    \label{eq:Defwrhoxitilde}
    \begin{align}
      \crossdiv \Bigl( (\I_2+A_\tor) \gamma_{\rhon}'
      \crossgrad \wrhonxithetatilde \Bigr) 
      &\,=\, -\crossdiv\bigl((\gamma_1-\gamma_0) 
        \chi_{\Drhon'} R_\theta^{-1}\bfxi'\bigr) &&
                                                    \text{in $\Br$} \,, \\
      \wrhonxithetatilde &\,=\, 0 && \text{on $\di\Br$} \,,
    \end{align}
  \end{subequations}
  where
  \begin{equation}
    \label{eq:DefAtor}
    A_\tor(s,\eta,\zeta) 
    \,:=\, \JrK^{-2}(s,\eta,\zeta) 
    \Bigl( \tau+\thetarhonprime \Bigr)^2(s) 
    \begin{bmatrix}
      \zeta^2 & -\eta\zeta\\ -\eta\zeta &\eta^2
    \end{bmatrix}
    \,, \qquad s\in(-L,L) \,,\; 
    (\eta,\zeta) \in \Br \,.
  \end{equation}
  The term $A_\tor$ will be used to account for the twisting of the
  cross-sections along the center curve~$K$ in the estimates below. 
  We note that for any $s\in(-\ell,\ell)$ the matrix $A_\tor(s)$
  is symmetric and positive semi-definite, and
  therefore \eqref{eq:Defwrhoxitilde} has a unique solution.
  Both $\wrhonxitheta$ and $\wrhonxithetatilde$ depend on the
  parameter $s\in(-L,L)$ although we do not indicate this through our
  notation. 

  We define
  \begin{equation*}
    \Wrhonxitilde(\rK(s,\eta,\zeta))
    \,:=\, \frhon(s) \JrK^{-1}(s,\eta,\zeta) \wrhonxithetatilde(\eta,\zeta) \,,
    \qquad s\in (-L,L) \,,\; (\eta,\zeta)\in \Br \,,
  \end{equation*}
  where $\frhon\in C^1([-L,L])$ is a cut-off function satisfying
  \begin{subequations}
    \label{eq:Propertiesfrho}
    \begin{align}
      &0\leq \frhon\leq 1 \,,\qquad 
        \frhon\chi_{(-\ell,\ell)} \,=\, \chi_{(-\ell,\ell)} \,,\\
      &\|\frhon'\|_{L^2((-L,L))} 
        \,\leq\, C |\Drhon'|^{-\frac{1}{8}} \,, \qquad
        \|\frhon(1-\chi_{(-\ell,\ell)})\|_{L^2((-L,L))} 
        \,\leq\, C|\Drhon'|^{\frac{1}{8}}
    \end{align}
  \end{subequations}
  (see \cite[Lmm.~3.6]{BerCapdeGFra09}).
  Using Proposition~\ref{pro:PolTenWxi} we find that, for any 
  $\psi\in C^1(\ol{\Omegar})$,
  \begin{equation}
    \label{eq:ProofThm4.1-1}
    \begin{split}
      \frac{1}{2\ell} \int_{-\ell}^{\ell} \bfxi(s) 
      \cdot \Mgamma(\pK(s))& \bfxi(s) \psi(\pK(s)) \ds\\
      &\,=\, \frac{1}{|\Drhon|} \int_{\Drhon} \psi \dx
      + \frac{1}{|\Drhon|} \int_{\Drhon} 
      \bigl(\bfxi\cdot\grad\Wrhonxi\bigr) \psi \dx
      + o(1)\\
      &\,=\, \frac{1}{|\Drhon|} \int_{\Drhon} \psi \dx
      + \frac{1}{|\Drhon|}
      \int_{\Drhon} \bigl(\bfxi\cdot\grad\Wrhonxitilde\bigr)\psi \dx\\
      &\phantom{\,=\,}
      + \frac{1}{|\Drhon|}
      \int_{\Drhon} \bigl(\bfxi\cdot \bigl( 
      \grad\Wrhonxi - \grad\Wrhonxitilde \bigr) \bigl)\psi \dx
      + o(1) \,.
     \end{split}
  \end{equation}

  We consider the three integrals on the right hand side
  of \eqref{eq:ProofThm4.1-1} separately. 
  Recalling~\eqref{eq:LocalCoordsKJacobian}
  and~\eqref{eq:LocalCoordsKGradient} we obtain that
  \begin{equation*}
    \begin{split}
      &\int_{\Drhon} \bigl(\bfxi\cdot\grad\Wrhonxitilde\bigr)
      \psi \dx\\
      &\,=\, \int_{-\ell}^{\ell} \int_{\Drhon'} 
      \Bigl( \bfxi(\rK(s,\eta,\zeta))
      \cdot \grad\Wrhonxitilde (\rK(s,\eta,\zeta) 
      \Bigr) 
      \psi(\rK(s,\eta,\zeta)) \JrK(s,\eta,\zeta) \dez \ds\\
      &\,=\, \int_{-\ell}^{\ell} \int_{\Drhon'} 
      \Bigl( \bfxi'
      \cdot \Bigl(\Rthetarhon(s)
      \crossgrad
      \Bigl(\JrK^{-1}(s,\eta,\zeta)\wrhonxithetatilde(\eta,\zeta)\Bigr) 
      \Bigr) \Bigr) 
      \psi(\rK(s,\eta,\zeta)) \JrK(s,\eta,\zeta) \dez \ds\\
      &\,=\, \int_{-\ell}^{\ell} \int_{\Drhon'} 
      \Bigl( \bigl( \Rthetarhon^{-1}(s)\bfxi'\bigr) 
      \cdot \crossgrad\wrhonxithetatilde(\eta,\zeta) 
      \Bigr)
      \psi(\rK(s,\eta,\zeta)) \dez \ds\\
      &\phantom{\,=\,}
      + \Ocal\Bigl( |\Drhon|^{\frac12} 
      \bigl\|\wrhonxithetatilde\bigr\|_{L^2(\Drhon')} \Bigr) \,,
    \end{split}
  \end{equation*}
  and applying \eqref{eq:Estwq2} and Lemma~\ref{lmm:Caccioppoli} gives
  \begin{equation*}
    \begin{split}
      &\int_{\Drhon} \bigl(\bfxi\cdot\grad\Wrhonxitilde\bigr)
      \psi \dx\\
      &\,=\, \int_{-\ell}^{\ell} \int_{\Drhon'} 
      \Bigl( \bigl( \Rthetarhon^{-1}(s)\bfxi'\bigr) 
      \cdot \crossgrad\wrhonxitheta(\eta,\zeta) 
      \Bigr)
      \psi(\rK(s,\eta,\zeta)) \dez \ds\\
      &\phantom{\,=\,}
      + \Ocal\Bigl( |\Drhon|^{\frac12} 
      \bigl\|\crossgrad\wrhonxitheta
      -\crossgrad\wrhonxithetatilde\bigr\|_{L^2(\Drhon)} \Bigr)
      + o(|\Drhon|)\\
      &\,=\, \int_{-\ell}^{\ell} \int_{\Drhon'} 
      \Bigl( \bigl( \Rthetarhon^{-1}(s)\bfxi'\bigr) 
      \cdot \crossgrad\wrhonxitheta(\eta,\zeta) 
      \Bigr)
      \psi(\rK(s,\eta,\zeta)) \dez \ds
      + o(|\Drhon|) \,.
    \end{split}
  \end{equation*}
  Accordingly, using \eqref{eq:PolTen2D} we obtain for the first two
  terms in \eqref{eq:ProofThm4.1-1} that 
  \begin{equation}
    \label{eq:ProofThm4.1-b_last}
    \begin{split}
      &\frac{1}{|\Drhon|} \int_{\Drhon} \psi \dx
      + \frac{1}{|\Drhon|}
      \int_{\Drhon} \bigl(\bfxi\cdot\grad\Wrhonxi\bigr)\psi \dx\\
      &\,=\, \frac{|\Drhon'|}{|\Drhon|} \int_{-\ell}^{\ell}
      \biggl( \frac{1}{|\Drhon'|} \int_{\Drhon'}
      \psi(\rK(s,\eta,\zeta)) \dez\\
      &\phantom{\,=\,\frac{|\Drhon'|}{|\Drhon|}}
      + \frac{1}{|\Drhon'|} \int_{\Drhon'} 
      \Bigl( \bigl(\Rthetarhon^{-1}(s)\bfxi'\bigr)
      \cdot \crossgrad\wrhonxitheta(\eta,\zeta) \Bigr)
      \psi(\rK(s,\eta,\zeta)) \dez \biggr)\ds 
      + o(1)\\
      &\,\to\, \frac{1}{2\ell} \int_{-\ell}^{\ell}
      \bfxi' \cdot \bigl( \Rthetarhon(s) \mgamma 
      \Rthetarhon^{-1}(s) \bigr) \bfxi' \psi(\pK(s)) \ds
    \end{split}
  \end{equation}
  as $n\to\infty$.

  For the last integral on the right hand side of
  \eqref{eq:ProofThm4.1-1} we find, using H\"older's inequality, that
  \begin{equation*}
    \biggl| \int_{\Drhon} \bigl(\bfxi\cdot \bigl( 
    \grad\Wrhonxi - \grad\Wrhonxitilde \bigr) \bigl)\psi \dx
    \biggr|
    \,\leq\, C \| \bfxi \|_{L^\infty(\Omegar)} |\Drhon|^{\frac12}
    \bigl\| \grad\Wrhonxi 
    - \grad\Wrhonxitilde \bigr\|_{L^2(\Omegar)} \,.
  \end{equation*}
  To finish the proof, we will show that 
  $\| \grad\Wrhonxi - \grad\Wrhonxitilde \bigr\|_{L^2(\Omegar)}$
  is $o\bigl(|\Drhon|^{\frac12}\bigr)$ as $n\to\infty$. 
  Then \eqref{eq:CharacterizationPolTen(b)} follows from 
  \eqref{eq:ProofThm4.1-1} and \eqref{eq:ProofThm4.1-b_last}.
  This is done in Lemma~\ref{lmm:EstI2} below.
\end{proof}

\begin{lemma}
  \label{lmm:EstI2}
  Let $\bfxi'\in\Sone$, and let $\bfxi \in C^1(\Omegar,\Stwo)$ be
  given by
  \begin{equation*}
    \bfxi(\rK(s,\eta,\zeta)) 
    \,:=\, 
    \begin{bmatrix}
      \nK(s) & \bK(s)
    \end{bmatrix}
    \bfxi'
    \qquad \text{for any }\, \bfx = \rK(s,\eta,\zeta)\in\Omegar \,.
  \end{equation*}
  Let $\Wrhonxi \in H^1_0(\Omegar)$ be the corresponding solution 
  to~\eqref{eq:DefWrhoxi}, and define
  \begin{equation}
    \label{eq:DefWrhonxitilde}
    \Wrhonxitilde(\rK(s,\eta,\zeta))
    \,:=\, \frhon(s) \JrK^{-1}(s,\eta,\zeta) 
    \wrhonxithetatilde(\eta,\zeta) \,,
    \qquad s\in (-L,L) \,,\; (\eta,\zeta)\in \Br \,,
  \end{equation}
  where $\frhon\in C^1([-L,L])$ is a cut-off
  functions satisfying \eqref{eq:Propertiesfrho}, and 
  $\wrhonxithetatilde \in H^1_0(\Br)$
  solves \eqref{eq:Defwrhoxitilde}. 
  Then,
  \begin{equation*}
     \bigl\| \grad\Wrhonxi - \grad\Wrhonxitilde \bigr\|_{L^2(\Omegar)}
    \,=\, o\bigl(|\Drhon|^{\frac12}\bigr) \,.
  \end{equation*}
\end{lemma}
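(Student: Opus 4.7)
The plan is to proceed by the energy method. Both $\Wrhonxi$ and $\Wrhonxitilde$ lie in $H^1_0(\Omegar)$ --- the cut-off $\frhon$ makes $\Wrhonxitilde$ vanish near the caps $s=\pm L$ of the tube, while $\wrhonxithetatilde\in H^1_0(\Br)$ handles the lateral boundary --- so $v:=\Wrhonxi-\Wrhonxitilde$ is admissible as a test function in the weak form of \eqref{eq:DefWrhoxi}. Subtracting $\int_{\Omegar}\gamma_{\rhon}\grad\Wrhonxitilde\cdot\grad v\dx$ from both sides of this weak form and using uniform ellipticity $\gamma_{\rhon}\geq c>0$ gives
\begin{equation*}
c\,\|\grad v\|_{L^2(\Omegar)}^2 \,\leq\, \Bigl|\int_{\Omegar}(\gamma_{\rhon}-\gamma_0)\bfxi\cdot\grad v\dx - \int_{\Omegar}\gamma_{\rhon}\grad\Wrhonxitilde\cdot\grad v\dx\Bigr|,
\end{equation*}
so the claim reduces to bounding this residual linear functional of $\grad v$ by $o(|\Drhon|^{1/2})\|\grad v\|_{L^2(\Omegar)}$.

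To analyze the residual I would transfer to local coordinates $\rK$ and apply \eqref{eq:LocalCoordsKGradient}--\eqref{eq:LocalCoordsKJacobian}. Differentiating $\Wrhonxitilde=\frhon(s)\JrK^{-1}\wrhonxithetatilde(\eta,\zeta)$, the cross-sectional part of $\grad\Wrhonxitilde$ has leading contribution $\frhon\JrK^{-1}[\nK,\bK]\Rthetarhon\crossgrad\wrhonxithetatilde$ (after dropping derivatives of $\frhon$ and of $\JrK^{-1}$), while the tangential part has leading contribution $\frhon\JrK^{-2}(\tau+\thetaprime)(\zeta,-\eta)^\trans\!\cdot\crossgrad\wrhonxithetatilde$ times $\tK$. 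Pairing against the analogous decomposition of $\grad v$, integrating with volume element $\JrK\dez\ds$, and using orthonormality of $(\tK,\nK,\bK)$, the leading part of $\int_{\Omegar}\gamma_{\rhon}\grad\Wrhonxitilde\cdot\grad v\dx$ collapses (for each $s$) to the weak form of \eqref{eq:Defwrhoxitilde} with coefficient $(\I_2+A_\tor)\gamma'_{\rhon}$, tested against the cross-sectional profile of $v$. By construction of $A_\tor$ in \eqref{eq:DefAtor}, this exactly cancels the leading part of $\int_{\Omegar}(\gamma_{\rhon}-\gamma_0)\bfxi\cdot\grad v\dx$, which is supported in $\Drhon$.

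The surviving residual splits into three families. First, terms in which a derivative falls on the cut-off $\frhon$ are controlled using $\|\frhon'\|_{L^2}\leq C|\Drhon'|^{-1/8}$ from \eqref{eq:Propertiesfrho} together with the standard 2D corrector bound $\|\wrhonxithetatilde\|_{L^2(\Br)}=O(|\Drhon'|^{3/4})$. Second, terms in which a derivative falls on $\JrK^{-1}$, on the frame $(\tK,\nK,\bK)$, or on $\Rthetarhon$ each acquire an extra factor $\rhon$, since $(\eta,\zeta)\in\Drhon'$ forces $|(\eta,\zeta)|\leq\rhon$. Third, contributions from the transition interval $|s|\in(\ell,L)$ are bounded by $\|\frhon(1-\chi_{(-\ell,\ell)})\|_{L^2}\leq C|\Drhon'|^{1/8}$ together with $\|\crossgrad\wrhonxithetatilde\|_{L^2(\Br)}=O(|\Drhon'|^{1/2})$. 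Summing, the residual is $o(|\Drhon|^{1/2})\|\grad v\|_{L^2(\Omegar)}$, and dividing by $\|\grad v\|_{L^2(\Omegar)}$ finishes the proof.

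The hard part is the local-coordinate bookkeeping in the middle step: the several $\JrK$ factors, the twist matrix $\Rthetarhon$, and the asymmetric tangential coupling $(\tau+\thetaprime)(\zeta,-\eta)^\trans$ must combine in precisely the right way to reproduce the operator $\crossdiv((\I_2+A_\tor)\gamma'_{\rhon}\crossgrad\cdot)$ from \eqref{eq:Defwrhoxitilde}. It is exactly this matching that dictates the form of $A_\tor$ in \eqref{eq:DefAtor}; a cruder decomposition that absorbed twist/torsion effects as errors would leave an $O(1)$ residual rather than the $o(1)$ required after the $|\Drhon|^{-1/2}$ rescaling.
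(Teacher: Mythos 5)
Your overall strategy (energy identity with test function $\psi=\Wrhonxi-\Wrhonxitilde$, passage to local coordinates, and cancellation of the leading cross-sectional part against the weak form of \eqref{eq:Defwrhoxitilde} with coefficient $(\I_2+A_\tor)\gamma_{\rhon}'$) is the same as the paper's, but the error analysis has a genuine gap in your ``second family'' of residuals. You claim every term in which a derivative hits $\JrK^{-1}$, the frame, or the twist gains a factor $\rhon$ ``since $(\eta,\zeta)\in\Drhon'$ forces $|(\eta,\zeta)|\leq\rhon$''. That reasoning only applies to terms whose integrand is supported in $\Drhon$ (e.g.\ the curvature mismatch in the source, which the paper isolates in the auxiliary corrector $v_1$). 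It fails for the torsion/twist coupling term
\begin{equation*}
  \int \gamma_{\rhon}\,\frhon\,\JrK^{-1}\,\frac{\di \psi}{\di s}\,
  \Bigl( \bfd_\tor' \cdot \crossgrad\wrhonxithetatilde \Bigr)\,\JrK \dez\ds \,,
\end{equation*}
because $\crossgrad\wrhonxithetatilde$ is spread over all of $\Br$ (not supported in $\Drhon'$), and on $\Br$ the factor $(\zeta,-\eta)^\trans$ in $\bfd_\tor'$ is only $O(r)$, not $O(\rhon)$. With the available bound $\|\crossgrad\wrhonxithetatilde\|_{L^2(\Br)}\leq C|\Drhon'|^{1/2}$ this term is merely $O(|\Drhon|^{1/2})\,\|\grad\psi\|_{L^2(\Omegar)}$ --- exactly the same order as the left-hand side of your energy inequality, so it cannot be absorbed and the argument stalls at this borderline term.

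The paper resolves precisely this point by an extra device that has no counterpart in your proposal: it integrates by parts in $s$ (using a second cut-off $\grhon$ with the properties \eqref{eq:Propertiesgrho}) to move the $s$-derivative off $\psi$, pairs the result with $\|\Wrhonxi-\Wrhonxitilde\|_{L^2(\Omegar)}\leq C|\Drhon|^{3/4}$ instead of the gradient norm, and controls the differentiated factor via the estimates of Lemma~\ref{lmm:EstDerivativew} on $\frac{\di}{\di s}\crossgrad\wrhonxithetatilde$ (see \eqref{eq:BeforeIntbyParts}--\eqref{eq:ProofLemmaEstI2Final}). Related to this, your bookkeeping treats $\wrhonxithetatilde$ as a function of $(\eta,\zeta)$ alone, whereas it depends on $s$ through $\Rtheta(s)$, $\tau(s)$ and $A_\tor$; the terms where the $s$-derivative falls on $\wrhonxithetatilde$ are absent from your three families and also require Lemma~\ref{lmm:EstDerivativew}. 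Without the integration-by-parts step and these $s$-derivative estimates, the claimed $o(|\Drhon|^{1/2})$ bound is not established.
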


In the proof of Lemma~\ref{lmm:EstI2} we use the following technical
result, which can be shown using the same arguments as in the proof of
\cite[Lmm.~3.4]{BerCapdeGFra09}. 

\begin{lemma}
  \label{lmm:EstDerivativew}
  Let $\bfxi'\in\Sone$, and let $\wrhonxithetatilde \in H^1_0(\Br)$ be the
  solution to \eqref{eq:Defwrhoxitilde}.
  Then, for a.e.~$s\in(-\ell,\ell)$, 
  \begin{equation}
    \label{eq:EstDerivativegradw}
    \biggl\| \frac{\di}{\di s} \Bigl(
    \crossgrad\wrhonxithetatilde \Bigr) \biggr\|_{L^2(\Br)}
    \,\leq\, C |\Drhon'|^{\frac12} 
    \qquad\text{and}\qquad
    \biggl\|\frac{\di\wrhonxithetatilde}{\di s} \biggr\|_{L^2(\Br)}
    \,\leq\, C |\Drhon'|^{\frac34} \,.
  \end{equation}
\end{lemma}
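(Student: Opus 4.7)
The plan is to differentiate the defining PDE \eqref{eq:Defwrhoxitilde} with respect to the parameter $s$ and then apply energy and higher-integrability estimates to the resulting equation for $v := \frac{\di}{\di s}\wrhonxithetatilde$. In \eqref{eq:Defwrhoxitilde} the domain $\Br$, the cross-section $\Drhon'$, the coefficient $\gamma_\rhon'$, and the unit vector $\bfxi'$ are all independent of $s$; the $s$-dependence enters only through $A_\tor(s)$ and $R_\theta^{-1}(s)$, both of class $C^1$ because $K\in C^3$ (so $\tau,\kappa\in C^1$) and $\theta\in C^1$. Consequently $\wrhonxithetatilde$ depends smoothly on $s$ as an element of $H^1_0(\Br)$, and differentiating the weak form of \eqref{eq:Defwrhoxitilde} shows that $v\in H^1_0(\Br)$ solves
\begin{equation*}
-\crossdiv\bigl((\I_2+A_\tor)\gamma_\rhon'\crossgrad v\bigr)
\,=\, \crossdiv\bigl(\tfrac{\di A_\tor}{\di s}\gamma_\rhon'\crossgrad\wrhonxithetatilde\bigr)
+ \crossdiv\bigl((\gamma_1-\gamma_0)\chi_{\Drhon'}\tfrac{\di R_\theta^{-1}}{\di s}\bfxi'\bigr)
\quad \text{in } \Br,
\end{equation*}
with homogeneous Dirichlet boundary values; here $\frac{\di A_\tor}{\di s}$ and $\frac{\di R_\theta^{-1}}{\di s}$ are bounded on $\Br$ and $[-L,L]$, respectively, uniformly in $n$ and $s$.

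For the first estimate I would test the weak form of this PDE against $v$ itself. The matrix $(\I_2+A_\tor)\gamma_\rhon'$ is uniformly positive definite (since $A_\tor$ is positive semi-definite and $\gamma_\rhon'\geq\min(\gamma_0,\gamma_1)>0$), while the standard energy estimate applied to \eqref{eq:Defwrhoxitilde} itself gives $\|\crossgrad\wrhonxithetatilde\|_{L^2(\Br)}\leq C|\Drhon'|^{1/2}$. Combined with $\|\chi_{\Drhon'}\|_{L^2(\Br)}=|\Drhon'|^{1/2}$ and Cauchy--Schwarz, this yields $c\|\crossgrad v\|_{L^2(\Br)}^2\leq C|\Drhon'|^{1/2}\|\crossgrad v\|_{L^2(\Br)}$, from which the first claimed bound follows at once.

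The second estimate is the delicate one: Poincar\'e's inequality applied to the first bound would only give $\|v\|_{L^2(\Br)}\leq C|\Drhon'|^{1/2}$, which is strictly weaker than $|\Drhon'|^{3/4}$ as $\rhon\to 0$. To recover the extra power I would exploit that the right-hand side of the equation for $v$ is the divergence of a field whose $L^{4/3}(\Br)$-norm is of order $|\Drhon'|^{3/4}$: the second summand satisfies $\|\chi_{\Drhon'}\|_{L^{4/3}(\Br)}=|\Drhon'|^{3/4}$ directly, while for the first summand the same order of magnitude for $\|\crossgrad\wrhonxithetatilde\|_{L^{4/3}(\Br)}$ is obtained by applying Meyers' higher-integrability theorem to \eqref{eq:Defwrhoxitilde}. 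A second application of Meyers to the equation for $v$ then yields $\|\crossgrad v\|_{L^{4/3}(\Br)}\leq C|\Drhon'|^{3/4}$, and the two-dimensional Sobolev embedding $W^{1,4/3}_0(\Br)\hookrightarrow L^4(\Br)$ together with H\"older's inequality on the fixed disk $\Br$ gives $\|v\|_{L^2(\Br)}\leq|\Br|^{1/4}\|v\|_{L^4(\Br)}\leq C|\Drhon'|^{3/4}$, as required.

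The main obstacle is justifying Meyers' estimate at the exponent $4/3$ uniformly in $n$ and $s$: the coefficient $(\I_2+A_\tor)\gamma_\rhon'$ jumps across $\di\Drhon'$, and Meyers' theorem only provides an open range $(p_0,p_0')\ni 2$ of admissible exponents whose size is controlled by the ellipticity ratio. The ratio is uniformly bounded in $n$ and $s$ (on $\Br$ one has $|A_\tor|\leq Cr^2$ independent of $n$ and $s$, and $\gamma_\rhon'$ takes only the two values $\gamma_0,\gamma_1$), so the admissible range is uniform; should it fail to contain $4/3$ for the specific values of $\gamma_1/\gamma_0$ at hand, the second Meyers step may instead be replaced by an explicit duality/Aubin--Nitsche argument against the adjoint problem with source $v$, following the template of \cite[Lmm.~3.4]{BerCapdeGFra09} and exploiting the concentration of the source on $\Drhon'$ directly.
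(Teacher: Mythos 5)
Your proposal is correct in substance and follows the same strategy that the paper itself delegates to the proof of \cite[Lmm.~3.4]{BerCapdeGFra09}: differentiate the cell problem \eqref{eq:Defwrhoxitilde} with respect to $s$, obtain the first bound by testing the differentiated equation with $\partial_s\wrhonxithetatilde$ and using $\|\crossgrad\wrhonxithetatilde\|_{L^2(\Br)}\le C|\Drhon'|^{1/2}$, and then extract the finer $|\Drhon'|^{3/4}$ bound for the lower-order norm. Two remarks. First, the detour through Meyers' theorem (and the ensuing worry about whether $p=4/3$ lies in the admissible range) is unnecessary: both $\|\crossgrad\wrhonxithetatilde\|_{L^{4/3}(\Br)}\le C|\Drhon'|^{3/4}$ and the analogous bound for the differentiated problem follow from the decomposition already used to prove \eqref{eq:Estwq3} in Lemma~\ref{lmm:Estwrho} --- put the smooth, uniformly elliptic coefficient $(\I_2+A_\tor)\gamma_0$ on the left, which yields an isomorphism from $W^{1,p}_0(\Br)$ to $W^{-1,p}(\Br)$ for every $1<p<\infty$ uniformly in $n$ and $s$, and note that the remaining discontinuous contribution $(\I_2+A_\tor)(\gamma_0-\gamma_{\rhon}')\crossgrad(\cdot)$ is supported in $\Drhon'$, so H\"older together with the already established $L^2$ gradient bounds controls it in $L^{4/3}$ by $C|\Drhon'|^{3/4}$; alternatively, your fallback via a duality argument in the spirit of \eqref{eq:Estwq2} also works. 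Second, your regularity justification is slightly overstated: $\pK\in C^3$ only gives $\tau\in C^0$ (not $C^1$), and $\theta\in C^1$ only gives $\theta'\in C^0$, so differentiating $A_\tor$ with respect to $s$ needs a bit more smoothness than you claim; this, however, is implicit in the statement of the lemma itself (which presupposes that the $s$-derivative of the corrector exists for a.e.~$s$) and is therefore not a defect specific to your argument.
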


\begin{proof}[Proof of Lemma~\ref{lmm:EstI2}]
  Recalling \eqref{eq:DefWrhoxi} we note that 
  $\Wrhonxi\in H^1_0(\Omegar)$ fulfills
  \begin{equation}
    \label{eq:ProofEstI2-0}
    \int_{\Omegar} \gamma_{\rhon} \grad \Wrhonxi \cdot \grad \psi \dx
    \,=\, - \int_{\Omegar} (\gamma_{\rhon}-\gamma_0) \bfxi 
    \cdot \grad \psi \dx
  \end{equation}
  for all $\psi\in H^1_0({\Omegar})$.
  Furthermore, recalling~\eqref{eq:LocalCoordsKJacobian} and
  \eqref{eq:LocalCoordsKGradient} we find that 
  $\Wrhonxitilde=\frhon\JrK^{-1}\wrhonxithetatilde$ satisfies
  \begin{equation}
    \label{eq:ProofEstI2-1}
    \begin{split}
      &\int_{\Omegar} \gamma_{\rhon} \grad \Wrhonxitilde 
      \cdot \grad \psi \dx\\
       &\,=\, \int_{-L}^L \int_{\Br} \gamma_{\rhon} \Bigl( 
      \crossgrad\Wrhonxitilde \cdot \crossgrad\psi%
       + \bigl(\tK\cdot\grad\Wrhonxitilde\bigr)
      \bigl(\tK\cdot\grad\psi\bigr)
      \Bigr) \JrK \dez \ds\\
      &\,=\, \int_{-L}^L \int_{\Br} \gamma_{\rhon}
      \biggl( \chi_{(-\ell,\ell)} \Bigl( 
      \wrhonxithetatilde \crossgrad\JrK^{-1} 
      + \JrK^{-1} \crossgrad \wrhonxithetatilde \Bigr) 
      \cdot \crossgrad\psi\\
      &\phantom{\,=\,\qquad}
      + (1-\chi_{(-\ell,\ell)}) \Bigl(
      \frhon\wrhonxithetatilde\crossgrad\JrK^{-1}
      + \frhon\JrK^{-1}\crossgrad\wrhonxithetatilde \Bigr)      
      \cdot\crossgrad\psi\\
      &\phantom{\,=\,\qquad}
      + \Bigl( \wrhonxithetatilde\Bigl(\tK\cdot\grad(\frhon\JrK^{-1})\Bigr)
      + \frhon\JrK^{-1}\Bigl(\tK\cdot\grad\wrhonxithetatilde\Bigr)
       \Bigr) 
      \bigl( \tK\cdot\grad\psi \bigr) \biggr) \JrK \dez \ds \,.
    \end{split}
  \end{equation}
  Using \eqref{eq:LocalCoordsKGradient} once more we further decompose
  the last term on the right hand side of \eqref{eq:ProofEstI2-1} to
  obtain 
  \begin{multline*}
    \Bigl(\tK\cdot\grad\wrhonxithetatilde\Bigr)
    \bigl( \tK\cdot\grad\psi \bigr)
    \,=\, \JrK^{-2} \frac{\di \wrhonxithetatilde}{\di s} 
    \frac{\di \psi}{\di s}
    + A_\tor \crossgrad\wrhonxithetatilde 
    \cdot \crossgrad\psi\\
    + \frac{\di \wrhonxithetatilde}{\di s} 
    \bigl( \bfd_\tor' \cdot \crossgrad\psi \bigr)
    + \frac{\di \psi}{\di s} 
    \Bigl( \bfd_\tor' \cdot \crossgrad\wrhonxithetatilde \Bigr) \,,
  \end{multline*}
  where $A_\tor$ has been defined in \eqref{eq:DefAtor} and
  \begin{equation*}
    \bfd_\tor'(s,\eta,\zeta)
    \,:=\, \Bigl( \tau+\thetarhonprime \Bigr)(s) 
    \JrK^{-1}(s,\eta,\zeta)
    \begin{bmatrix}
      \zeta\\ -\eta
    \end{bmatrix} \,, \qquad s\in(-L,L) \,,\; 
    (\eta,\zeta) \in \Br \,. 
  \end{equation*}
  Accordingly,
  \begin{equation}
    \label{eq:ProofEstI2-4}
    \begin{split}
      &\int_{\Omegar} \gamma_{\rhon} \grad \Wrhonxitilde 
      \cdot \grad \psi \dx
      \,=\, \int_{-\ell}^\ell \int_{\Br} (\I_2+A_\tor) \gamma_{\rhon}
      \crossgrad \wrhonxithetatilde \cdot \crossgrad\psi \dez \ds\\
      &\phantom{\,=\,}
      + \int_{-L}^L \int_{\Br} \gamma_{\rhon} \biggl( 
      \chi_{(-\ell,\ell)} \wrhonxithetatilde 
      \crossgrad\JrK^{-1}\cdot\crossgrad\psi\\
      &\phantom{\,=\,\qquad}
      + (1-\chi_{(-\ell,\ell)}) \Bigl(
      \frhon\wrhonxithetatilde\crossgrad\JrK^{-1}
      + \frhon\JrK^{-1}(\I_2+A_\tor)\crossgrad\wrhonxithetatilde \Bigr)      
      \cdot\crossgrad\psi\\
      &\phantom{\,=\,\qquad}
      + \wrhonxithetatilde\Bigl(\tK\cdot\grad(\frhon\JrK^{-1})\Bigr)
      \bigl( \tK\cdot\grad\psi \bigr)
      + \frhon \JrK^{-3} \frac{\di \wrhonxithetatilde}{\di s} 
      \frac{\di \psi}{\di s}\\
      &\phantom{\,=\,\qquad}
      + \frhon\JrK^{-1} 
      \biggl(\frac{\di \wrhonxithetatilde}{\di s} 
      \bigl( \bfd_\tor' \cdot \crossgrad\psi \bigr)
      + \frac{\di \psi}{\di s} 
      \Bigl( \bfd_\tor' \cdot \crossgrad\wrhonxithetatilde \Bigr) 
      \biggr)
      \biggr) \JrK \dez \ds \,.
    \end{split}
  \end{equation}

  Now let $v\in H^1_0(\Br)$ satisfy
  \begin{equation*}
    \crossdiv\bigl( (\I_2+A_\tor) \gamma_{\rhon}' \crossgrad v \bigr)
    \,=\, -\crossdiv\bigl( 
    \JrK (\gamma_{\rhon}'-\gamma_0')
    \bigl(\Rthetarhon^{-1}\bfxi'\bigr) \bigr) 
    \qquad \text{in } \Br \,.
  \end{equation*}
  Using \eqref{eq:LocalCoordsKJacobian} we find that
  \begin{equation}
    \label{eq:PDEv}
    \begin{split}
      \crossdiv\bigl( (\I_2+A_\tor)\gamma_{\rhon}' \crossgrad v \bigr)
      &\,=\, -\crossdiv\bigl( (\gamma_{\rhon}'-\gamma_0')
      \bigl(\Rthetarhon^{-1}\bfxi'\bigr) \bigr)\\
      &\phantom{\,=\,}
      +\crossdiv\Bigl( \kappa\Bigl(\bfe_1'\cdot\Rthetarhon
      \begin{bmatrix}
        \eta\\\zeta
      \end{bmatrix}\Bigr) (\gamma_{\rhon}'-\gamma_0') 
      \bigl(\Rthetarhon^{-1}\bfxi'\bigr) \Bigr) \,.
    \end{split}
  \end{equation}
  Together with \eqref{eq:Defwrhoxi} and the uniqueness of solutions
  to the Dirichlet problem this implies that
  $\wrhonxithetatilde=v-v_1$, where $v_1\in H^1_0(\Br)$ satisfies
  \begin{equation*}
    \crossdiv\bigl( (\I_2+A_\tor) \gamma_{\rhon}' \crossgrad v_1 \bigr)
    \,=\, -\crossdiv\Bigl( \kappa\Bigl(\bfe_1'\cdot\Rthetarhon
    \begin{bmatrix}
      \eta\\\zeta
    \end{bmatrix}\Bigr) (\gamma_{\rhon}'-\gamma_0') 
    \bigl(\Rthetarhon^{-1}\bfxi'\bigr)\Bigr) 
    \qquad\text{in } \Br \,.
  \end{equation*}
  Using \eqref{eq:Estwq1} we obtain the estimate
  \begin{equation*}
    \|\crossgrad v_1\|_{L^2(\Br)}
    \,\leq\, C \kappamax\rhon\, |\Drhon'|^{\frac12} \,.
  \end{equation*}
  Accordingly \eqref{eq:PDEv} and \eqref{eq:LocalCoordsKGradient} give 
  \begin{equation}
    \label{eq:ProofEstI2-9}
    \begin{split}
      \int_{-\ell}^\ell& \int_{\Br} (\I_2+A_\tor) \gamma_{\rhon}
      \crossgrad \wrhonxithetatilde \cdot \crossgrad\psi\, \dez \ds\\
      &\,=\, \int_{-\ell}^\ell \int_{\Br}  (\I_2+A_\tor)
      \gamma_{\rhon} \crossgrad v \cdot \crossgrad\psi\, \dez \ds 
      + o\bigl(|\Drhon|^{\frac{1}{2}} \|\grad\psi\|_{L^2(\Omegar)}\bigr)\\
      &\,=\, -\int_{-\ell}^\ell \int_{\Br} 
      (\gamma_{\rhon}-\gamma_0) \bigl(\Rthetarhon^{-1}(s)\bfxi'\bigr)
      \cdot \crossgrad\psi\, \JrK \dez \ds 
      + o\bigl(|\Drhon|^{\frac{1}{2}} \|\grad\psi\|_{L^2(\Omegar)}\bigr)\\
      &\,=\, -\int_{{\Omegar}} (\gamma_{\rhon}-\gamma_0) 
      \bfxi \cdot \grad \psi \dx 
      + o\bigl(|\Drhon|^{\frac{1}{2}}\|\grad\psi\|_{L^2(\Omegar)}\bigr) \,.
    \end{split}
  \end{equation}
  Combining \eqref{eq:ProofEstI2-0}, \eqref{eq:ProofEstI2-4} and
  \eqref{eq:ProofEstI2-9} we find that, for all 
  $\psi\in H^1_0({\Omegar})$, 
  \begin{equation}
    \label{eq:BeforeIntbyParts}
    \begin{split}
      &\int_{\Omegar} \gamma_{\rhon} 
      \bigl( \grad \Wrhonxi - \grad \Wrhonxitilde \bigr)
      \cdot \grad \psi \dx\\
      &\,=\, -\int_{-L}^L \int_{\Br} \gamma_{\rhon} \biggl( 
      \chi_{(-\ell,\ell)} \wrhonxithetatilde 
      \crossgrad\JrK^{-1}\cdot\crossgrad\psi\\
      &\phantom{\,=\,}
      + (1-\chi_{(-\ell,\ell)}) \Bigl(
      \frhon\wrhonxithetatilde\crossgrad\JrK^{-1}
      + \frhon\JrK^{-1}(\I_2+A_\tor)\crossgrad\wrhonxithetatilde \Bigr)      
      \cdot\crossgrad\psi\\
      &\phantom{\,=\,}
      + \wrhonxithetatilde\Bigl(\tK\cdot\grad(\frhon\JrK^{-1})\Bigr)
      \bigl( \tK\cdot\grad\psi \bigr) 
      + \frhon \JrK^{-3} \frac{\di \wrhonxithetatilde}{\di s} 
      \frac{\di \psi}{\di s}\\
      &\phantom{\,=\,}
      + \frhon\JrK^{-1} 
      \biggl(\frac{\di \wrhonxithetatilde}{\di s} 
      \bigl( \bfd_\tor' \cdot \crossgrad\psi \bigr)
      + \frac{\di \psi}{\di s} 
      \Bigl( \bfd_\tor' \cdot \crossgrad\wrhonxithetatilde \Bigr) 
      \biggr)
      \biggr) \JrK \dez \ds\\
      &\phantom{\,=\,} 
      + o\bigl(|\Drhon|^{\frac{1}{2}}\|\grad\psi\|_{L^2(\Omegar)}\bigr) \,.
    \end{split}
  \end{equation}
  Now let $\grhon\in C^1([-L,L])$ be a cut-off function satisfying
  \begin{subequations}
    \label{eq:Propertiesgrho}
    \begin{align}
      &0\leq \grhon\leq 1 \,, \qquad 
        \supp(\grhon) = [-\ell,\ell] \,, \qquad
        \grhon\chi_{(-\frac{\ell}{2},\frac{\ell}{2})} 
        \,=\, \chi_{(-\frac{\ell}{2},\frac{\ell}{2})} \,,\\
      &\|\grhon'\|_{L^2((-L,L))} 
        \,\leq\, C |\Drhon'|^{-\frac{1}{8}} \,, \qquad
        \bigl\|\frhon(1-\grhon)\bigr\|_{L^2((-L,L))} 
        \,\leq\, C|\Drhon'|^{\frac{1}{8}} \,,
    \end{align}
  \end{subequations}
  where $\frhon$ denotes the cut-off function from
  \eqref{eq:Propertiesfrho} (see \cite[Lmm.~3.6]{BerCapdeGFra09} for a
  similar construction).
  Integrating by parts shows that the last term in the integral on
  the right hand side of \eqref{eq:BeforeIntbyParts} satisfies
  \begin{equation*}
    \begin{split}
      - \int_{-L}^L \int_{\Br}
      \gamma_{\rhon} \frhon \frac{\di \psi}{\di s} 
      \Bigl(& \bfd_\tor' \cdot \crossgrad\wrhonxithetatilde \Bigr) \dez \ds\\
      &\,=\, \int_{-L}^L \int_{\Br}
      \psi \frac{\di}{\di s} \biggl( \gamma_{\rhon} \frhon \grhon
      \Bigl( \bfd_\tor' \cdot \crossgrad\wrhonxithetatilde \Bigr) 
      \biggr) \dez \ds \\
      &\phantom{\,=\,}
      - \int_{-L}^L \int_{\Br}
      \gamma_{\rhon} \frhon (1-\grhon) 
      \Bigl( \bfd_\tor' \cdot \crossgrad\wrhonxithetatilde \Bigr) 
      \frac{\di \psi}{\di s} \dez \ds \,.
    \end{split}
  \end{equation*}
  Combining this with \eqref{eq:BeforeIntbyParts} and choosing 
  $\psi = \Wrhonxi-\Wrhonxitilde$ this implies that
  \begin{equation}
    \label{eq:ProofLemmaEstI2Final}
    \begin{split}
      &\bigl\| \grad\Wrhonxi - \grad\Wrhonxitilde \bigr\|_{L^2(\Omegar)}^2
      \,\leq\, C \biggl(
      \Bigl\| \chi_{(-\ell,\ell)}
      \wrhonxithetatilde \crossgrad\JrK^{-1} \Bigr\|_{L^2(\Omegar)}\\
      &+ \Bigl\| (1-\chi_{(-\ell,\ell)}) 
      \frhon\wrhonxithetatilde\crossgrad\JrK^{-1} \Bigr\|_{L^2(\Omegar)}
      + \Bigl\| (1-\chi_{(-\ell,\ell)})\frhon\JrK^{-1} (\I_2+A_\tor)
      \crossgrad\wrhonxithetatilde \Bigr\|_{L^2(\Omegar)}\\
      &+ \Bigl\| \wrhonxithetatilde 
      \Bigl(\tK\cdot\grad\bigl(\frhon\JrK^{-1}\bigr)\Bigr)
      \Bigr\|_{L^2({\Omegar})}
      + \Bigl\| \frhon \JrK^{-3} 
      \frac{\di \wrhonxithetatilde}{\di s} \Bigr\|_{L^2(\Omegar)}
      + \Bigl\| \frhon\JrK^{-1}
      \frac{\di \wrhonxithetatilde}{\di s} \bfd_\tor'
      \Bigr\|_{L^2(\Omegar)}\\
      &+ \Bigl\| \gamma_{\rhon} \frhon (1-\grhon) 
      \Bigl( \bfd_\tor' \cdot \crossgrad\wrhonxithetatilde \Bigr) 
      \Bigr\|_{L^2(\Omegar)}
      \biggr) 
      \bigl\|\grad\Wrhonxi-\grad\Wrhonxitilde
      \bigr\|_{L^2(\Omegar)}\\
      &+ C \Bigl\| \frac{\di}{\di s} \Bigl(
      \gamma_{\rhon} \frhon \grhon \JrK^{-1}
      \Bigl( \bfd_\tor'\cdot\crossgrad\wrhonxithetatilde \Bigr) \Bigr) 
      \Bigr\|_{L^2(\Omegar)} 
      \bigl\|\Wrhonxi-\Wrhonxitilde\bigr\|_{L^2(\Omegar)}\\
      &+ o\Bigl(|\Drhon|^{\frac{1}{2}}
      \bigl\|\grad\Wrhonxi-\grad\Wrhonxitilde
      \bigr\|_{L^2(\Omegar)}\Bigr) \,.
    \end{split}
  \end{equation}
  From \eqref{eq:ProofEstI2-0}, \eqref{eq:DefWrhonxitilde},
  \eqref{eq:Defwrhoxitilde}, and \eqref{eq:Estwq2} we immediately
  obtain that 
  \begin{equation*}
    \bigl\|\Wrhonxi-\Wrhonxitilde \bigr\|_{L^2(\Omegar)}
    \,\leq\, C |\Drhon|^{\frac34} \,.
  \end{equation*}
  Next we estimate the remaining eight terms on the right hand side of
  \eqref{eq:ProofLemmaEstI2Final} separately.
  For the first term we obtain, using \eqref{eq:Estwq2}, that
  \begin{equation*}
    \begin{split}
      \Bigl\| \chi_{(-\ell,\ell)}
      \wrhonxithetatilde \crossgrad\JrK^{-1} \Bigr\|_{L^2(\Omegar)}^2
      &\,\leq\, C \Bigl\| \wrhonxithetatilde \Bigr\|_{L^2(\Br)}^2
      \,\leq\, C |\Drhon|^{\frac32} \,.
    \end{split}
  \end{equation*}
  Similarly, using \eqref{eq:Propertiesfrho} and \eqref{eq:Estwq2} we
  find for the second term on the right hand side of
  \eqref{eq:ProofLemmaEstI2Final} that 
  \begin{equation*}
    \begin{split}
      \Bigl\|  (1-\chi_{(-\ell,\ell)}) 
      \frhon\wrhonxithetatilde\crossgrad\JrK^{-1} \Bigr\|_{L^2(\Omegar)}^2
       &\,\leq\, C \Bigl\| \wrhonxithetatilde \Bigr\|_{L^2(\Br)}^2
      \,\leq\, C |\Drhon|^{\frac32} \,.
    \end{split}
  \end{equation*}
  Applying \eqref{eq:Propertiesfrho} and \eqref{eq:Estwq1} the third
  term on the right hand side of \eqref{eq:ProofLemmaEstI2Final} can
  be estimated by
  \begin{multline*}
    \Bigl\| (1-\chi_{(-\ell,\ell)})\frhon\JrK^{-1}(\I_2+A_\tor)
    \crossgrad\wrhonxithetatilde \Bigr\|_{L^2(\Omegar)}^2\\
    \,\leq\, C \bigl\| (1-\chi_{(-\ell,\ell)})\frhon \bigr\|_{L^2((-L,L))}^2 
    \Bigl\| \crossgrad\wrhonxithetatilde \Bigr\|_{L^2(\Br)}^2
    \,\leq\, C |\Drhon'|^{\frac14} |\Drhon'| 
    \,\leq\, C |\Drhon|^{\frac54} \,.
  \end{multline*}
  For the fourth term on the right hand
  side of \eqref{eq:ProofLemmaEstI2Final} we obtain, using
  \eqref{eq:LocalCoordsKGradient}, \eqref{eq:Propertiesfrho},
  and~\eqref{eq:Estwq2} that 
  \begin{equation*}
    \begin{split}
      &\Bigl\| \wrhonxithetatilde 
      \Bigl(\tK\cdot\grad\bigl(\frhon\JrK^{-1}\bigr)\Bigr)
      \Bigr\|_{L^2({\Omegar})}^2\\
      &\,=\, \int_{-L}^L \int_{\Br}
      \Bigl|\wrhonxithetatilde(\eta,\zeta)\Bigr|^2 
      \Bigl| \JrK^{-2}(s,\eta,\zeta)\frhon'(s) 
      + \frhon\bigl(\tK\cdot\grad\JrK^{-1}\bigr) \Bigr|^2
      \JrK(s,\eta,\zeta) \dez \ds\\
      &\,\leq\, C \Bigl\|\wrhonxithetatilde\Bigr\|_{L^2(\Br)}^2 
      \bigl( \|\frhon'\|_{L^2(-L,L)}^2 + C \bigr)
      \,\leq\, C|\Drhon'|^{\frac32} |\Drhon'|^{-\frac14} 
      \,\leq\, C|\Drhon|^{\frac54} \,.
    \end{split}
  \end{equation*}
  Using \eqref{eq:EstDerivativegradw} we obtain for the fifth and
  sixth term on the right hand side of \eqref{eq:ProofLemmaEstI2Final}
  that 
  \begin{equation*}
    \Bigl\| \frhon \JrK^{-3} 
    \frac{\di \wrhonxithetatilde}{\di s} \Bigr\|_{L^2(\Omegar)}
    \,\leq\, C |\Drhon'|^{\frac32}
    \quad\text{and}\quad
    \Bigl\| \frhon\JrK^{-1}
    \frac{\di \wrhonxithetatilde}{\di s} \bfd_\tor' \Bigr\|_{L^2(\Omegar)}^2
    \,\leq\, C |\Drhon'|^{\frac32} \,.
  \end{equation*}
  Applying \eqref{eq:Propertiesgrho} and \eqref{eq:Estwq1} we find for
  the seventh term on the right hand side of
  \eqref{eq:ProofLemmaEstI2Final} that 
  \begin{multline*}
    \Bigl\| \gamma_{\rhon} \frhon (1-\grhon) 
    \Bigl( \bfd_\tor' \cdot \crossgrad\wrhonxithetatilde \Bigr) 
    \Bigr\|_{L^2(\Omegar)}\\
    \,\leq\, C \| \gamma_{\rhon} \frhon (1-\grhon) \|_{L^2(\Omegar)}
    \Bigl\| \crossgrad\wrhonxithetatilde \Bigr\|_{L^2(\Br)}
    \,\leq\, C |\Drhon'|^{\frac18} |\Drhon'|^{\frac12} \,.
  \end{multline*}
  Finally, combining \eqref{eq:Propertiesgrho},
  \eqref{eq:Propertiesfrho}, \eqref{eq:Estwq1}, 
  and \eqref{eq:EstDerivativegradw} shows that
  \begin{equation*}
    \begin{split}
      &\Bigl\| \frac{\di}{\di s} \Bigl(
      \gamma_{\rhon} \frhon \grhon \JrK^{-1}
      \Bigl( \bfd_\tor'\cdot\crossgrad\wrhonxithetatilde \Bigr) \Bigr) 
      \Bigr\|_{L^2(\Omegar)}\\
      &\,\leq\, C \biggl(
      \Bigl\| \frac{\di}{\di s} 
      \bigl( \gamma_{\rhon} \frhon \grhon \JrK^{-1} \bigr) \Bigr\|_{L^2(\Omegar)}
      \Bigl\| \bfd_\tor'\cdot\crossgrad\wrhonxithetatilde \Bigr\|_{L^2(\Omegar)}
      + \Bigl\| \frac{\di}{\di s} \Bigl(
      \bfd_\tor'\cdot\crossgrad\wrhonxithetatilde \Bigr) 
      \Bigr\|_{L^2(\Omegar)} \biggr) \\
      &\,\leq\, C \Bigl( |\Drho'|^{-\frac18} |\Drho'|^{\frac12} 
      + |\Drho'|^{\frac12} \Bigr) 
      \,\leq\, C |\Drho'|^{\frac38} \,.
    \end{split}
  \end{equation*}
  This ends the proof.
\end{proof}

\begin{remark}
  \label{rem:PolarizationTensor}
  Combining Theorems~\ref{thm:GeneralAsymptotics} and
  \ref{thm:CharacterizationPolTen} gives almost explicit asymptotic
  representation formulas for the scattered electric field $\Esrhon$
  away from the scatterer and for its far field pattern~$\Einftyrhon$
  as $n\to\infty$.
  All components of these formulas, except for the polarization
  tensors $\meps,\mmu\in\R^{2\times 2}$ of the cross-sections
  $(\Drhon')_n$ can be evaluated straightforwardly. 
  
  If we assume some more regularity and consider sequences of
  cross-sections 
  \begin{equation*}
    D_{\rhon}' \,=\, \rhon B' \,, \qquad 0<\rhon<r/2 \,,\; n\in\N \,,
  \end{equation*}
  for some Lipschitz domain $B'\tm B_1'(0)$ then the following
  integral representation for $\mgamma$, $\gamma\in\{\eps,\mu\}$, is
  well known (see, e.g., \cite{CedMosVog98,AmmKan07}). 
  Introducing 
  \begin{equation*}
    \gammatilde'(\bfx') \,:=\,
    \begin{cases}
      \gamma_1\,, & \bfx' \in B'\,,\\
      \gamma_0\,, & \bfx' \in \R^2\setminus\overline{B'} \,,
    \end{cases}
  \end{equation*}
  the polarization tensor 
  $\mgamma = (\mgamma_{i,j})_{i,j} \in \R^{2\times 2}$ corresponding
  to the cross-sections $(\Drhon')_n$ satisfies 
  \begin{equation}
    \label{eq:mgammaij}
    \mgamma_{ij} 
    \,=\, \delta_{ij} + \frac1{|B'|} \int_{B'} 
    \frac{\di \wtilde_j}{\di x'_i}(\bfx') \dx' \,,
    \qquad 1\leq i,j\leq 2 \,,
  \end{equation}
  where $\delta_{ij}$ denotes the Kronecker delta and 
  $\wtilde_j\in H^1_\loc(\R^2)$ denotes the unique solution to the
  transmission problem
  \begin{subequations}
    \label{eq:wj}
    \begin{align}
      \Delta \wtilde_j
      &\,=\, 0 \qquad \quad\text{in } \R^2\setminus\di B \,,\\
      \wtilde_j\big|_{\di B}^+ - \wtilde_j\big|_{\di B}^- 
      &\,=\, 0 \,,\\
      \gamma_0 \frac{\di\wtilde_j}{\di\bfnu}\Big|_{\di B}^+ 
      - \gamma_1 \frac{\di\wtilde_j}{\di\bfnu}\Big|_{\di B}^-
      &\,=\, -(\gamma_0-\gamma_1) \nu_j \,,\\
      \wtilde_j(\bfx') &\to 0 \qquad \quad \text{as $|\bfx'|\to\infty$} \,.
    \end{align}
  \end{subequations}
  In particular the limit in \eqref{eq:PolTen2D} is uniquely determined
  and thus no extraction of a subsequence is required in the
  Theorems~\ref{thm:GeneralAsymptotics} and
  \ref{thm:CharacterizationPolTen} for this class of cross-sections. 
  Given any specific example for $B'$, the functions $\vtilde_j$,
  $j=1,2$, can be approximated by solving the two-dimensional
  transmission problem \eqref{eq:wj} numerically, and then the
  polarization tensor $\mgamma$ can be evaluated by applying a
  quadrature rule to the two-dimensional integral in
  \eqref{eq:mgammaij}. 
  Therewith, the representation formulas \eqref{eq:GenAsyEs} and
  \eqref{eq:GenAsyEinfty} yield a very efficient tool to evaluate the
  scattered electric field due to a thin tubular scattering object and
  its electric far field pattern.

  Explicit formulas for $\mgamma$ are, e.g., available when $B'$ is an 
  ellipse (cf., e.g., \cite{AmmKan07,BruHanVog03}) or a washer (see
  \cite{CapVog04}). 
  In the latter case, the thin tubular scatterer would correspond to a
  thin pipe. 
  In the special case when $B'$ is a disk we have that
  \begin{equation*}
    \mgamma 
    \,=\, 2\frac{\gamma_0}{\gamma_1+\gamma_0} \I_2 \,,
  \end{equation*}
  where $\I_2\in\R^2$ denotes the identity matrix.
  \hfill$\lozenge$
\end{remark}

We will provide numerical results and discuss the accuracy of the
asymptotic perturbation formula established in
Theorems~\ref{thm:GeneralAsymptotics} and
\ref{thm:CharacterizationPolTen} for some specific examples in 
Section~\ref{sec:NumericalResults} below. 
Before we do so, we consider an application and utilize the asymptotic
perturbation formula to develop an efficient iterative reconstruction
method for an inverse scattering problem with thin tubular scattering
objects. 
This is the topic of the next section.

\section{Inverse scattering with thin tubular scattering objects}
\label{sec:InverseProblem}
We consider the inverse problem to recover the shape of a thin tubular
scattering object~$\Drho$ as in~\eqref{eq:DefDrho} from observations
of a single electric far field pattern $\Einftyrho$ due to an incident
field $\Ei$. 
We restrict the discussion to the special case, when the cross-section
of the scatterer is of the form~$\Drho'=\rho B'$, where $B'=B_1(0)'$
is the unit disk. 
We assume that $\rho>0$ is small with respect to the wave length, and
that this radius as well as the material parameters $\eps_1$ and
$\mu_1$ of the scattering object are known \emph{a priori}.
In this case the explicit formulas for the polarization tensors 
$\meps,\mmu\in\R^{2\times 2}$ of the cross-section from
Remark~\ref{rem:PolarizationTensor} can be used in the reconstruction
algorithm, and a possible twisting the cross-section along the base
curve does have to be taken into account. 
Accordingly, the inverse problem reduces to reconstructing the
center curve~$K$ of the scattering object $\Drho$ from observations of
the electric far field pattern $\Einftyrho$.

We suppose that the incident field is a plane wave, i.e.,
\begin{equation}
  \label{eq:PlaneWave}
  \Ei(\bfx) 
  \,=\, \bfA e^{\rmi k \bftheta \cdot \bfx} \,, \qquad \bfx\in\Rd \,,
\end{equation}
with direction of propagation $\bftheta \in \Stwo$ and polarization
$\bfA \in \Cd\setminus\{0\}$ satisfying $\bfA \perp \bftheta$. 
Other incident fields are possible without significant changes.
The corresponding solution to the direct scattering problem
\eqref{eq:ScatteringProblem} defines a nonlinear operator 
\begin{equation*}
  \Frho:\; K \mapsto \Einftyrho \,,
\end{equation*}
which maps the center curve $K$ of the scattering object $\Drho$ onto
the electric far field pattern $\Einftyrho$. 
In terms of this operator the inverse problem consists in solving the
nonlinear and ill-posed equation
\begin{equation}
  \label{eq:InverseProblem}
  \Frho(K) \,=\, \Einftyrho
\end{equation}
for the unknown center curve $K$. 
In the following we will develop a suitably regularized iterative
reconstruction algorithm for this inverse problem. 

Introducing the set of admissible parametrizations,\footnote{We drop
  the assumption that the center curve of the scatterer is
  parametrized by arc-length for the numerical realization
  reconstruction algorithm.} 
\begin{equation*}
  \Pcal
  \,:=\, \bigl\{ \bfp \in C^3([0,1],\Rd) \;\big|\; 
  \bfp([0,1]) \text{ is simple and } \bfp^\prime(t) \neq 0 
  \text{ for all } t\in [0,1] \bigr\} \,,
\end{equation*}
we identify center curves of thin tubular scattering objects as in
\eqref{eq:DefDrho} with their parametrizations, and we denote
for any $\bfp\in\Pcal$ the leading order term in the asymptotic
perturbation formula \eqref{eq:GenAsyEinfty} by 
\begin{multline}
  \label{eq:Einftylead}
  \Einftyrhotilde(\xhat) 
  \,:=\,  (k\rho)^2\pi \biggl( -
  \int_0^1 (\mu_r-1)\, e^{\rmi k (\bftheta-\xhat)\cdot \bfp(s)} \,
  (\xhat\times \I_3) \Mmu_{\bfp}(s) (\bftheta\times\bfA)\, |\bfp'(s)| \ds\\
  +\int_0^1 (\eps_r-1)\, e^{\rmi k (\bftheta-\xhat)\cdot \bfp(s)} \,
  (\xhat\times(\I_3\times \xhat)) \Meps_{\bfp}(s) \bfA\, |\bfp'(s)| \ds
  \biggr) \,,
  \qquad \xhat\in\Stwo \,.
\end{multline}
Here, $\M_{\bfp}^\gamma:=\M^\gamma\circ\bfp$, $\gamma\in\{\mu,\eps\}$,
is the parametrized form of the polarization tensor for the thin
tubular scatterer. 
The parametrized unit tangent vector field $\tp = \bfp'/|\bfp'|$ along
$\bfp\in\Pcal$ can always be completed to a continuous orthogonal
frame $(\tp,\np,\bp)$. 
For instance, if $\bfp'(t)\times \bfp''(t)\not=0$ for all~$t\in[0,1]$,
then we can choose 
\begin{equation*}
  \tp 
  \,=\, \frac{\bfp'}{|\bfp'|} \,, \qquad 
  \np 
  \,=\, \frac{(\bfp'\times \bfp'')\times \bfp'}
  {|(\bfp'\times \bfp'')\times \bfp'|} \,, \qquad 
  \bp 
  \,=\, \tp\times \np \,.
\end{equation*}
The spectral characterization of $\Mgamma_{\bfp}$ from
Theorem~\ref{thm:CharacterizationPolTen} together with the explicit
formula for the polarization tensor of a disk in
Remark~\ref{rem:PolarizationTensor} shows that, for
$\gamma\in\{\eps,\mu\}$, 
\begin{equation*}
  \Mgamma_{\bfp}(s)
  \,=\, V_{\bfp}(s) M^\gamma V_{\bfp}(s)^\top\,, \qquad s\in [0,1] \,,
\end{equation*}
where 
$M^\gamma := \diag(1,2/(\gamma_r+1),2/(\gamma_r+1)) \in\Rdd$
and $V_{\bfp} := [\tp, \np, \bp] \in C^1([0,1],\Rdd)$ is the
matrix-valued function containing the components of the orthogonal
frame $(\tp,\np,\bp)$ as its columns.

Assuming that the radius $\rho>0$ of the thin tubular scattering
object $\Drho$ is sufficiently small such that the last term on the
right hand side of the asymptotic perturbation formula
\eqref{eq:GenAsyEinfty} can be neglected, we approximate the nonlinear
operator $\Frho$ by the nonlinear operator
\begin{equation*}
  \Trho :\, \Pcal \to L^2(\Stwo,\Cd) \,, \qquad 
  \Trho(\bfp) \,:=\, \Einftyrhotilde \,.
\end{equation*}
Accordingly, we consider the nonlinear minimization problem
\begin{equation}
  \label{eq:OLS}
  \frac{\| \Trho(\bfp) - \Einftyrho \|_{L^2(\Stwo)}^2}
  {\| \Einftyrho \|_{L^2(\Stwo)}^2} \to \min
\end{equation}
to approximate a solution to the inverse problem
\eqref{eq:InverseProblem}. 
We note that due to the asymptotic character of
\eqref{eq:GenAsyEinfty} the minimum of \eqref{eq:OLS} will be non-zero
even for exact far field data. 
Below we will apply a Gau{\ss}-Newton method to a regularized version 
of \eqref{eq:OLS}, and thus we require the Fr\'echet derivative of the
operator $\Trho$.

\subsection{The Fr\'echet derivative of $\Trho$}
\label{subsec:FrechetTrho}
The following lemma concerning the Fr\'echet derivative of the mapping 
$\bfp \mapsto \Mgamma_{\bfp}$ has been established in
\cite[Lmm.~4.1]{GriHyv11}. 

\begin{lemma}
  \label{lmm:FrechetDerivativeM}
  The mapping $\bfp \mapsto \Mgamma_{\bfp}$ is Fr\'echet differentiable
  from $\Pcal$ to $C([0,1],\Rdd)$, and its Fr\'echet derivative at
  $\bfp \in \Pcal$ is given by $\bfh \mapsto (\Mgamma_{\bfp,\bfh})'$
  with 
  \begin{equation*}
    (\Mgamma_{\bfp,\bfh})'
    \,=\, V'_{\bfp,\bfh} M^\gamma V_{\bfp}^\top
    + V_{\bfp} M^\gamma (V'_{\bfp,\bfh})^\top \,,
  \end{equation*}
  where the matrix-valued function $V'_{\bfp,\bfh}$ is defined
  columnwise by 
  \begin{equation*}
    V'_{\bfp,\bfh}
    \,:=\, \frac{1}{|\bfp'|} \bigl[
    (\bfh'\cdot\nK)\nK + (\bfh'\cdot \bK)\bK,\,
    -(\bfh' \cdot \nK)\tK,\, 
    -(\bfh'\cdot \bK)\tK \bigr] \,.
  \end{equation*}
\end{lemma}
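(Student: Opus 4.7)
The plan is to bypass direct differentiation of the (non-unique) frame columns $\np$ and $\bp$ by exploiting the special algebraic structure of $M^\gamma$. Setting $c := 2/(\gamma_r+1)$ so that $M^\gamma = \diag(1,c,c)$, and using the orthonormality identity $\tp\tp^\top + \np\np^\top + \bp\bp^\top = \I_3$, one obtains the frame-independent representation
\begin{equation*}
  \Mgamma_{\bfp}
  \,=\, V_{\bfp} M^\gamma V_{\bfp}^\top
  \,=\, \tp\tp^\top + c\bigl(\np\np^\top+\bp\bp^\top\bigr)
  \,=\, c\I_3 + (1-c)\tp\tp^\top \,.
\end{equation*}
Hence $\Mgamma_{\bfp}$ depends on $\bfp$ only through the unit tangent $\tp = \bfp'/|\bfp'|$, and the awkward question of how $\np$ and $\bp$ depend on $\bfp$ never enters the computation of the derivative.

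Next I would differentiate $\bfp\mapsto\tp$. Because $\bfp\in\Pcal$ satisfies $|\bfp'|\geq\delta>0$ on the compact interval $[0,1]$, post-composition with the smooth map $\bfv\mapsto\bfv/|\bfv|$ is Fréchet differentiable on a $C^1$-neighbourhood of $\bfp$, and a first-order Taylor expansion gives, with a $C([0,1],\Rd)$-uniform $o(\|\bfh\|_{C^1})$ remainder,
\begin{equation*}
  \tp'
  \,=\, \frac{1}{|\bfp'|}\bigl(\I_3 - \tp\tp^\top\bigr)\bfh'
  \,=\, \frac{1}{|\bfp'|}\bigl[(\bfh'\cdot\np)\np + (\bfh'\cdot\bp)\bp\bigr] \,.
\end{equation*}
Applying the ordinary product rule to $c\I_3+(1-c)\tp\tp^\top$, the Fréchet derivative of $\Mgamma_{\bfp}$ in direction $\bfh$ then equals $(1-c)(\tp'\tp^\top+\tp(\tp')^\top)$, and the uniform control of the tangent remainder lifts this to Fréchet differentiability as a map into $C([0,1],\Rdd)$.

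The final step is a purely algebraic verification that $V'_{\bfp,\bfh}M^\gamma V_{\bfp}^\top + V_{\bfp}M^\gamma (V'_{\bfp,\bfh})^\top$, with the specific $V'_{\bfp,\bfh}$ given in the statement, coincides with $(1-c)(\tp'\tp^\top+\tp(\tp')^\top)$. Since $M^\gamma$ is symmetric, this sum equals $A+A^\top$ with $A := V'_{\bfp,\bfh}M^\gamma V_{\bfp}^\top$; multiplying out the columns of $V'_{\bfp,\bfh}$ against $V_{\bfp}^\top$, the factor $c$ picked up from columns two and three of $V'_{\bfp,\bfh}$ causes the terms of the form $\tp\np^\top$ and $\tp\bp^\top$ (and their transposes) to combine into precisely $(1-c)(\tp'\tp^\top+\tp(\tp')^\top)$. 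The only subtlety worth flagging is notational rather than genuinely hard: the matrix $V'_{\bfp,\bfh}$ in the statement is \emph{not} the Fréchet derivative of any particular choice of $V_{\bfp}$ (whose second and third columns depend on the frame-completion rule adopted), but a convenient surrogate whose combination with $M^\gamma$ reproduces the true derivative, the freedom being absorbed by the eigenvalue degeneracy $M^\gamma = c\I_3+(1-c)\bfe_1\bfe_1^\top$.
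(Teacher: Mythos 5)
Your proof is correct. The paper itself does not supply a proof of this lemma, instead deferring to \cite[Lmm.~4.1]{GriHyv11}, where the argument (which the present paper implicitly adopts) proceeds by directly differentiating the columns of the frame matrix $V_{\bfp}$ and applying the product rule to $V_{\bfp} M^\gamma V_{\bfp}^\top$. Your route is genuinely different and, in this specific setting, cleaner: the observation that $M^\gamma = c\I_3 + (1-c)\bfe_1\bfe_1^\top$ with $c=2/(\gamma_r+1)$, combined with the resolution of the identity $\tp\tp^\top + \np\np^\top + \bp\bp^\top = \I_3$, collapses the polarization tensor to
\begin{equation*}
\Mgamma_{\bfp} \,=\, c\I_3 + (1-c)\,\tp\tp^\top \,,
\end{equation*}
so that only the (easily computed and manifestly frame-independent) derivative of $\tp$ enters. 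This sidesteps the awkward dependence of $\np$ and $\bp$ on the chosen framing, which is exactly where a head-on computation gets tangled. You also correctly flag the subtlety that $V'_{\bfp,\bfh}$ is \emph{not} the Fr\'echet derivative of $V_{\bfp}$ for the Frenet--Serret choice made in the paper (that derivative would involve $\bfh''$ through $\np$), but a surrogate whose combination with $M^\gamma$ reproduces the derivative of the product precisely because of the eigenvalue degeneracy $m_2=m_3=c$. The one caveat worth recording is that your simplification is tied to circular cross-sections; if $M^\gamma=\diag(1,m_2,m_3)$ with $m_2\ne m_3$ (elliptical cross-sections), then $\Mgamma_{\bfp}$ does depend on the individual normal directions and not just on $\tp$, a specific framing would have to be fixed, and the cited paper's more computational approach would be required.
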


Next we consider the Fr\'echet derivative of the mapping $\Trho$.

\begin{theorem}
  \label{thm:FrechetDerivativeTrho}
  The operator $\Trho: \Pcal \to L^2(\Stwo,\Cd)$ is Fr\'echet
  differentiable and its Fr\'echet derivative at $\bfp \in \Pcal$ is
  given by $\Trho'(\bfp) :\, C^3([0,1],\Rd) \to L^2(\Stwo,\Cd)$, 
  \begin{equation}
    \label{eq:FrechetDerivativeTrho}
    \Trho'(\bfp)\bfh 
    \,=\, (k\rho)^2\pi \biggl(
    -(\mu_r-1)\sum_{j=1}^3 {T_{\rho,\mu,j}'(\bfp)\bfh} 
    + (\eps_r -1)\sum_{j=1}^3 {T_{\rho,\eps,j}'(\bfp)\bfh}
    \biggr) 
  \end{equation}
  with
  \begin{align*}
    T_{\rho,\mu,1}'(\bfp)\bfh 
    &\,=\, \int_0^1 \rmi k 
      \bigl((\bftheta-\xhat)\cdot \bfh(s)\bigr) (\xhat\times \I_3) 
      \Mmu_{\bfp}(s) (\bftheta \times \bfA) 
      e^{\rmi k (\bftheta-\xhat) \cdot \bfp(s)} |\bfp'(s)| \ds \,,\\
    T_{\rho,\mu,2}'(\bfp)\bfh 
    &\,=\, \int_0^1 (\xhat\times \I_3)
      (\Mmu_{\bfp,\bfh})'(s) (\bftheta \times \bfA)
      e^{\rmi k (\bftheta-\xhat) \cdot \bfp(s)} |\bfp'(s)| \ds \,,\\ 
    T_{\rho,\mu,3}'(\bfp)\bfh 
    &\,=\, \int_0^1 (\xhat\times \I_3) 
      \Mmu_{\bfp}(s) (\bftheta \times \bfA)
      e^{\rmi k (\bftheta-\xhat) \cdot \bfp(s)}
      \frac{\bfp'(s)\cdot \bfh'(s)}{|\bfp'(s)|} \ds \,,
  \end{align*}    
  and
  \begin{align*}
    T_{\rho,\eps,1}'(\bfp)\bfh 
    &\,=\, \int_0^1 \rmi k 
      \bigl((\bftheta-\xhat)\cdot \bfh(s)\bigr)
      \bigl(\xhat\times (\I_3 \times \xhat)\bigr)
      \Meps_{\bfp}(s) \bfA 
      e^{\rmi k (\bftheta-\xhat) \cdot \bfp(s)} |\bfp'(s)| \ds \,,\\ 
    T_{\rho,\eps,2}'(\bfp)\bfh 
    &\,=\, \int_0^1 (\xhat\times \bigl(\I_3 \times \xhat)\bigr)
      (\Meps_{\bfp,\bfh})'(s) \bfA 
      e^{\rmi k (\bftheta-\xhat) \cdot \bfp(s)} |\bfp'(s)| \ds \,,\\
    T_{\rho,\eps,3}'(\bfp)\bfh 
    &\,=\, \int_0^1 \bigl(\xhat\times (\I_3 \times \xhat)\bigr)
      \Meps_{\bfp}(s) \bfA 
      e^{\rmi k (\bftheta-\xhat) \cdot \bfp(s)}
      \frac{\bfp'(s)\cdot \bfh'(s)}{|\bfp'(s)|} \ds \,. 
  \end{align*}
\end{theorem}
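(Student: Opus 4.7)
The plan is to differentiate the integrand of $\Einftyrhotilde$ defined in \eqref{eq:Einftylead} with respect to $\bfp$ using the product rule, then verify that the resulting candidate derivative is indeed the Fréchet derivative in the required norm.

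First I would observe that for each fixed $\xhat\in\Stwo$ and $s\in[0,1]$ the integrand in \eqref{eq:Einftylead} is a product of three factors that depend on $\bfp$: the phase $e^{\rmi k(\bftheta-\xhat)\cdot\bfp(s)}$, the polarization tensor $\Mgamma_{\bfp}(s)$, and the arc-length element $|\bfp'(s)|$. All remaining ingredients (the constants $\mu_r,\eps_r,k,\rho$, the projectors $\xhat\times\I_3$ and $\xhat\times(\I_3\times\xhat)$, the incident polarization $\bfA$ and direction $\bftheta$) are independent of $\bfp$. Applying the product rule formally to these three factors yields exactly the six terms $T_{\rho,\mu,j}'(\bfp)\bfh$ and $T_{\rho,\eps,j}'(\bfp)\bfh$, $j=1,2,3$, listed in the statement. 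The chain rule on the exponential produces the factor $\rmi k(\bftheta-\xhat)\cdot\bfh(s)$ appearing in $T_{\rho,\gamma,1}'$; Lemma~\ref{lmm:FrechetDerivativeM} directly supplies $(\Mgamma_{\bfp,\bfh})'$ appearing in $T_{\rho,\gamma,2}'$; and the standard identity $\di|\bfp'| = (\bfp'\cdot\bfh')/|\bfp'|$ produces the factor in $T_{\rho,\gamma,3}'$.

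Second, I would verify linearity and boundedness of the candidate $\Trho'(\bfp)$ as a map from $C^3([0,1],\Rd)$ into $L^2(\Stwo,\Cd)$. Linearity in $\bfh$ is immediate from the formulas. For boundedness, since each integrand is a continuous function of $s$ (using $\bfp\in\Pcal$ so $|\bfp'|$ is bounded away from zero, which makes $V_\bfp$ and hence $\Mgamma_\bfp$ and $(\Mgamma_{\bfp,\bfh})'$ continuous in $s$), one estimates each $T_{\rho,\gamma,j}'(\bfp)\bfh$ pointwise in $\xhat$ by $C\|\bfh\|_{C^1([0,1],\Rd)}$, uniformly on $\Stwo$, so the $L^2(\Stwo)$ norm is controlled by the $C^3$ (in fact $C^1$) norm of $\bfh$.

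Third, and this is the main technical step, I would prove that
\[
  \bigl\| \Trho(\bfp+\bfh) - \Trho(\bfp) - \Trho'(\bfp)\bfh
  \bigr\|_{L^2(\Stwo,\Cd)}
  \,=\, o\bigl(\|\bfh\|_{C^3([0,1],\Rd)}\bigr) \qquad \text{as } \bfh\to 0.
\]
This reduces to showing the analogous estimate for the integrand, uniformly in $\xhat\in\Stwo$ and $s\in[0,1]$. Writing the difference as a telescoping sum across the three varying factors, each remainder has the form ``function evaluated at $\bfp+\bfh$ minus value at $\bfp$ minus linearization.'' For the exponential factor this follows from $|e^{\rmi z}-1-\rmi z|\le\tfrac12|z|^2$ with $z = k(\bftheta-\xhat)\cdot\bfh(s)$, giving an $\Ocal(\|\bfh\|_{C^0}^2)$ bound uniform in $\xhat$. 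For the arc-length factor $|\bfp'|$, smooth differentiability on the open set $\{\bfp'\neq 0\}$ gives an $\Ocal(\|\bfh'\|_{C^0}^2)$ bound. For the polarization tensor, the Fréchet differentiability statement of Lemma~\ref{lmm:FrechetDerivativeM} in $C([0,1],\Rdd)$ supplies the corresponding $o(\|\bfh\|)$ estimate in the sup norm over $s$. The cross terms involving products of two first-order differences are $\Ocal(\|\bfh\|^2)$ by the boundedness just established. Integrating the uniform pointwise bound over $s\in[0,1]$ and taking the $L^2(\Stwo)$ norm then yields the claim.

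The main obstacle I expect is organizing the telescoping carefully so that each remainder is controlled uniformly in $\xhat\in\Stwo$; in particular, ensuring that the constants arising from $\xhat\times\I_3$ and $\xhat\times(\I_3\times\xhat)$ are bounded independently of $\xhat$ (which they are, since $|\xhat|=1$) and that the Fréchet differentiability from Lemma~\ref{lmm:FrechetDerivativeM}, which is stated in $C([0,1],\Rdd)$, is strong enough to survive multiplication by the other bounded factors and integration in $s$. Once this bookkeeping is in place the proof is a routine application of the product rule.
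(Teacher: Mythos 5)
Your argument is correct and is essentially the approach the paper intends: the paper omits the proof, noting only that it follows from Taylor's theorem along the lines of \cite[Thm.~4.2]{GriHyv11}, which is precisely the product-rule linearization of the three $\bfp$-dependent factors (phase, polarization tensor via Lemma~\ref{lmm:FrechetDerivativeM}, arc-length element) plus a uniform-in-$\xhat$ remainder estimate that you carry out. Your bookkeeping of the remainder terms fills in the details the paper leaves to the cited reference, and no step in it is problematic.
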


\begin{proof}
  The Fr\'echet derivative and the Fr\'echet differentiability of
  $\Trho$ can be established using Taylor's theorem along the lines of 
  \cite[Thm.~4.2]{GriHyv11}, where a similar operator has been
  considered in the context of an inverse conductivity problem. 
  The proof is therefore omitted. 
\end{proof}

\subsection{Discretization and regularization}
In the reconstruction algorithm we use interpolating cubic splines
with not-a-knot conditions at the end points of the spline to
discretize center curves $K$ parametrized by $\bfp\in\Pcal$. 
Given a non-uniform partition 
\begin{equation}
  \label{eq:DefTri}
  \tri 
  \,:=\,  \{ 0 = t_1 < t_2 < \dots < t_n = 1 \} \tm [0,1] \,,
\end{equation}
we denote corresponding not-a-knot splines by~$\bfp_\tri$. 
The space of all not-a-knot splines with respect to $\tri$ is
denoted by $\Ptri\not\tm\Pcal$.

Since the inverse problem \eqref{eq:InverseProblem} is ill-posed, we
add two regularization terms to stabilize the minimization of
\eqref{eq:OLS}. 
The functional $\Psi_1: \Ptri \to \R$ is defined by
\begin{equation*}
  \Psi_1(\bfp_\tri) 
  \,:=\, \int_{0}^{1} |\kappa(s)|^2 \ds \,,
\end{equation*}
where
\begin{equation*}
  \kappa(s) 
  \,:=\, \frac{|\bfp_\tri'(s) 
    \times \bfp_\tri''(s)|}{|\bfp_\tri'(s)|^3} \,, \qquad 
  s\in[0,1] \,,
\end{equation*}
denotes the curvature of the curve parametrized by $\bfp_\tri$.
We add $\alpha_1^2\Psi_1$ with a regularization parameter
$\alpha_1>0$ as a penalty term to the left hand side of \eqref{eq:OLS}
to prevent minimizers from being too strongly entangled. 

Furthermore, we define another functional $\Psi_2: \Ptri \to \R$ by
\begin{equation*}
  \Psi_2(\bfp_\tri) 
  \,:=\, \sum_{j=1}^{n-1} 
  \Bigl| \frac{1}{n-1}\int_{0}^{1}|\bfp_\tri'(s)| \ds 
  -  \int_{t_j}^{t_{j+1}}|\bfp_\tri' (s)| \ds \Bigr|^2 \,.
\end{equation*}
Adding $\alpha_2^2\Psi_2$ with a regularization
parameter $\alpha_2>0$ as a penalty term to the left hand side of
\eqref{eq:OLS} promotes uniformly distributed control points along
the spline and therefore prevents clustering of control points during
the minimization process. 

Adding both quadratic regularization terms $\alpha_1^2\Psi_1$ and
$\alpha_2^2\Psi_2$ to the left hand side of \eqref{eq:OLS} gives the 
regularized nonlinear output least squares functional
\begin{equation}
  \label{eq:RegOLS}
  \Phi:\, \Ptri \to \R \,, \qquad 
  \Phi(\bfp_\tri)
  \,=\, 
  \frac{\bigl\| \Trho(\bfp_\tri) - \Einftyrho \bigr\|_{L^2(\Stwo)}^2}
  {\bigl\| \Einftyrho \bigr\|_{L^2(\Stwo)}^2}
  + \alpha_1^2 \psi_1(\bfp_\tri) + \alpha_2^2 \psi_2(\bfp_\tri) \,,
\end{equation}
which we will minimize iteratively.

\subsection{The reconstruction algorithm}
We assume that $2N(N-1)$ observations of the far field
$\Einftyrho\in C^\infty(\Stwo,\Cd)$ are available on an equiangular
grid of points 
\begin{equation}
  \label{eq:StwoGrid}
  \bfy_{jl}
  \,:=\,
  \bigl[ \sin\theta_j\cos\varphi_l,\,
  \sin\theta_j\sin\varphi_l,\,
  \cos\theta_j \bigr]^\top \in \Stwo \,, \qquad
  j=1,...,N-1 \,,\; l=1,...,2N \,,
\end{equation}
with $\theta_j=j\pi/N$ and $\varphi_l=(l-1)\pi/N$ for some $N\in\N$. 
Accordingly, we approximate the $L^2(\Stwo)$-norms in the cost
functional $\Phi$ from \eqref{eq:RegOLS} using a composite trapezoid
rule in horizontal and vertical direction. 
This yields an approximation $\Phi_N$ that is given by  
\begin{equation}
  \label{eq:RegOLSDiscrete}
  \Phi_N(\bfp_\tri)
  \,=\, \frac{\sum_{j=1}^{N-1} \sum_{l=1}^{2N} 
    \frac{\pi^2}{N^2} \sin(\theta_j) 
    \bigl| \bigl(\Trho(\bfp_\tri) 
    - \bfE_\rho^\infty\bigr)(\bfy_{jl}) \bigr|^2}
  {\sum_{j=1}^{N-1} \sum_{l=1}^{2N} 
    \frac{\pi^2}{N^2} \sin(\theta_j) 
    \bigl| \bfE_\rho^\infty(\bfy_{jl}) \bigr|^2}
  + \alpha_1^2 \Psi_1(\bfp_\tri)
  + \alpha_2^2 \Psi_2(\bfp_\tri) \,.
\end{equation}

We denote by $\xvec \in \R^{3n}$ the vector with
coordinates for the control points $\bfx^{(1)}, \dots, \bfx^{(n)}$ of
a not-a-knot spline $\bfp_\tri$. 
We approximate all integrals over the parameter range $[0,1]$ of
$\bfp_\tri$ in~\eqref{eq:RegOLSDiscrete} using a composite Simpson's
rule with $M=2m+1$ nodes on each subinterval of the partition $\tri$.
Accordingly, we can rewrite $\Phi_N$ in the form
\begin{equation}
  \label{eq:NLS}
  \Phi_N(\bfp_\tri) 
  \,=\, |P_N(\xvec)|^2 \,,
\end{equation}
where $P_N: \R^{3n} \to \R^{Q}$ and 
$Q = 12N(N-1)+3((M-1)(n-1)+1)+(n-1)$. 
Storing real and imaginary parts separately, $12N(N-1)$
entries of $P_N(\xvec)$ correspond to the normalized residual term in 
\eqref{eq:RegOLSDiscrete}, $3((M-1)(n-1)+1)$ entries correspond to the
penalty term $\Psi_1$, and $n-1$ entries correspond to the penalty
term $\Psi_2$. 
Consequently, we obtain a real-valued nonlinear least squares problem,
which is solved numerically using the Gau{\ss}-Newton algorithm with a
golden section line search (see, e.g., \cite{NocWri99}). 

In addition to the Fr\'echet derivative of the operator $\Trho$ this
also requires the Fr\'echet derivatives of the mappings
$\psi_1:\, \Pcal \to \R$,
\begin{equation*}
  \psi_1(\bfp) 
  \,:=\, \kappa \,,
\end{equation*}
and $\psi_{2,j}:\, \Pcal \to \R$,
\begin{equation*}
  \psi_{2,j}(\bfp) 
  \,:=\, \frac{1}{n-1} \int_{0}^{1} |\bfp'(s)| \ds 
  - \int_{t_j}^{t_{j+1}} |\bfp'(s)| \ds \,, 
\end{equation*}
$j=1,\ldots,n-1$, corresponding to the penalty terms $\Psi_1$ and
$\Psi_2$ in \eqref{eq:RegOLS}, respectively. 
A short calculation shows that at $\bfp\in\Pcal$ these are given by
$\psi_1'(\bfp) :\, C^3([0,1],\Rd) \to \R$, 
\begin{equation}
  \label{eq:FrechetDerivativePsi1}
  \psi_1'(\bfp)\bfh
  \,=\, \frac{\bfh''}{|\bfp'|^2} 
  - \frac{2 (\bfp')^\trans \bfh'}{|\bfp'|^4} \bfp''
  - \frac{(\bfp'')^\trans \bfp'}{|\bfp'|^4} \bfh'
  - \Bigl( \frac{(\bfp'')^\trans \bfh'}{|\bfp'|^4} 
  + \frac{(\bfh'')^\trans \bfp'}{|\bfp'|^4} 
  - \frac{4((\bfp'')^\trans \bfp') ((\bfp')^\trans \bfh')}
  {|\bfp'|^6} \Bigr) \bfp' \,,
\end{equation}
and $\psi_2'(\bfp) :\, C^3([0,1],\Rd) \to \R$, 
\begin{equation}
  \label{eq:FrechetDerivativePsi2}
  \psi_{2,j}'(\bfp)\bfh 
  \,=\, \frac{1}{n-1} \int_{0}^{1} \frac{(\bfp')^\trans \bfh'}
  {|\bfp'|} \ds 
  - \int_{t_j}^{t_{j+1}} \frac{(\bfp')^\trans \bfh'}
  {|\bfp'|} \ds \,, \qquad j=1,\ldots,n-1 \,.
\end{equation}

In Algorithm~\ref{alg:reconstruction} we describe the optimization
scheme that is used to minimize $|P_N|^2$ from \eqref{eq:NLS}. 
Here we denote the Jacobian of $P_N$ by $J_{P_N}$.
The algorithm uses the following heuristic stopping criterion.
If the optimal step size $s_\ell^*$ determined by the line search in
the current iteration is zero and if the value of the objective
functional $|P(\xvec_\ell)|^2$ is dominated by the normalized residual
term in \eqref{eq:RegOLSDiscrete}, then the algorithm stops. 
However, if the optimal step size $s_\ell^*$ determined by the line
search is zero but the value of the objective functional
$|P(\xvec_\ell)|^2$ is dominated by the contribution of one of the two
regularization terms $\alpha_j^2\Psi_j(\bfp_{\tri,\ell})$, 
$j\in\{1,2\}$, then we conclude that in order to further improve the
reconstruction, the corresponding regularization parameter should be
reduced.   
In this case we replace $\alpha_{j}$ by $\frac{\alpha_{j}}{2}$ and
restart the iteration using the current iterate for the initial
guess. 

\begin{Algorithm}[ht!]
  \caption{\small Reconstruction of a thin tubular scatterer $\Drho$
    with circular cross-section}
  \label{alg:reconstruction}
  Suppose that $\Ei$ (i.e., $k$, $\bftheta$, $\bfA$), $\rho$, $\eps_r$,
  $\mu_r$, and $\Einftyrho$ are given. 

  \begin{algorithmic}[1]

    \State Choose an initial guess 
    $\xvec_0 = \bigl[ \bfx^{(1)},\ldots, \bfx^{(n)} \bigr]$ for the 
    control points of a cubic not-a-knot spline 
    $\bfp_{\tri,0} \in \Ptri$ approximating the unknown center curve
    $K$ of $\Drho$. 

    \State Initialize the regularization parameters 
    $\alpha_{1},\alpha_{2}>0$, and a maximal step size $\smax>0$ for
    the line search.

    \For{$\ell = 0,1,...,\ell_{\mathrm{max}}$}

    \State Use the Fr\'echet derivatives $\Trho'$, $\psi_1'$,
    $\psi_{2,1}',\ldots, \psi_{2,n-1}'$ in
    \eqref{eq:FrechetDerivativeTrho}, \eqref{eq:FrechetDerivativePsi1} 
    and \eqref{eq:FrechetDerivativePsi2} to evaluate the Jacobian
    $J_{P_N}$ of $P_N$ from \eqref{eq:NLS}, which is then used to
    compute the Gau\ss-Newton search direction 
    \begin{equation*}
      \bfDelta_\ell
      \,:=\, -\Bigl( J_{P_N}^\top(\xvec_\ell)
      J_{P_N}(\xvec_\ell) \Bigr)^{-1}
      J_{P_N}^\top(\xvec_\ell)P_N (\xvec_\ell) \,.
    \end{equation*}

    \State Use the golden section line search to compute 
    \begin{equation*}
      s^*_\ell 
      \,=\, \argmin_{s \in [0,\smax]} 
      \Phi_N\bigl(\xvec_\ell + s \bfDelta_\ell\bigr) \,.
    \end{equation*}

    \If{$s^*_\ell>0$}
    \State Update the reconstruction, i.e.,
    \begin{equation*}
      \xvec_{\ell+1} \,=\, \xvec_\ell + s^*_\ell\bfDelta_\ell 
      \qquad\text{and}\qquad
      \ell \,=\, \ell+1 \,.
    \end{equation*}
    
    \ElsIf{$s^*_\ell=0$ \textbf{and} the value of
      $|P(\xvec_\ell)|^2$ is dominated by the contribution of
      $\alpha_j^2\Psi_j(\bfp_{\tri,\ell})$, $j\in\{1,2\}$,
      in~\eqref{eq:RegOLSDiscrete}} 

    \State Reduce the corresponding regularization parameter, i.e.,
    \begin{equation*}
      \alpha_{j} 
      \,=\, \alpha_{j} / 2 \,.
    \end{equation*}    

    \ElsIf{$s^*_\ell=0$ \textbf{and} the value of 
      $|P(\xvec_\ell)|^2$ is dominated by the residual term
      in~\eqref{eq:RegOLSDiscrete}} 

    \Return

    \EndIf

    \EndFor

    \State The entries of $\xvec_\ell$ are the coefficients of the
    reconstruction $\bfp_{\tri,\ell}$ of the unknown center curve $K$
    of $\Drho$.

  \end{algorithmic}
\end{Algorithm}

The fact that not a single partial differential equation has to be
solved during the reconstruction process makes this algorithm
extremely efficient, when compared to traditional iterative
shape reconstruction methods for inverse scattering problems for
Maxwell's equations (see, e.g., \cite{Het12,HagAreBetHet19}).

\section{Numerical results}
\label{sec:NumericalResults}
To further illustrate our theoretical findings we provide numerical
examples.
We discuss the accuracy of the approximation of the electric far field
pattern $\Einftyrho$ by the leading order term $\Einftyrhotilde$
in~\eqref{eq:Einftylead}, and we study the performance of the
regularized Gau{\ss}-Newton reconstruction scheme as outlined in
Algorithm~\ref{alg:reconstruction}. 

Recalling that the electric permittivity and the magnetic permeability
in free space are given by 
\begin{equation*}
  \eps_0 
  \,\approx\, 8.854 \times 10^{-12}\, \mathrm{Fm}^{-1}
  \qquad\text{and}\qquad
  \mu_0 
  \,=\, 4\pi \times 10^{-7}\, \mathrm{Hm}^{-1} \,,
\end{equation*}
we consider in all numerical tests an incident plane
wave~$\Ei$ as in \eqref{eq:PlaneWave} at frequency $f=100$~MHz with 
direction of propagation $\bftheta = \frac1{\sqrt{3}} [1,-1,1]^\trans$
and polarization $\bfA = [-1, \rmi, 1+\rmi]^\trans$. 
Accordingly, the wave number is given by 
$k=\omega \sqrt{\eps_0 \mu_0}\approx 2.1$, where 
$\omega=2\pi f$ denotes the angular frequency, and the wave length
is $\lambda \approx 3.0$.  

We focus on three different examples for thin tubular scattering
objects. 
\begin{figure}[t]
  \begin{subfigure}[c]{0.33\textwidth}
    \includegraphics[width=1.\textwidth]{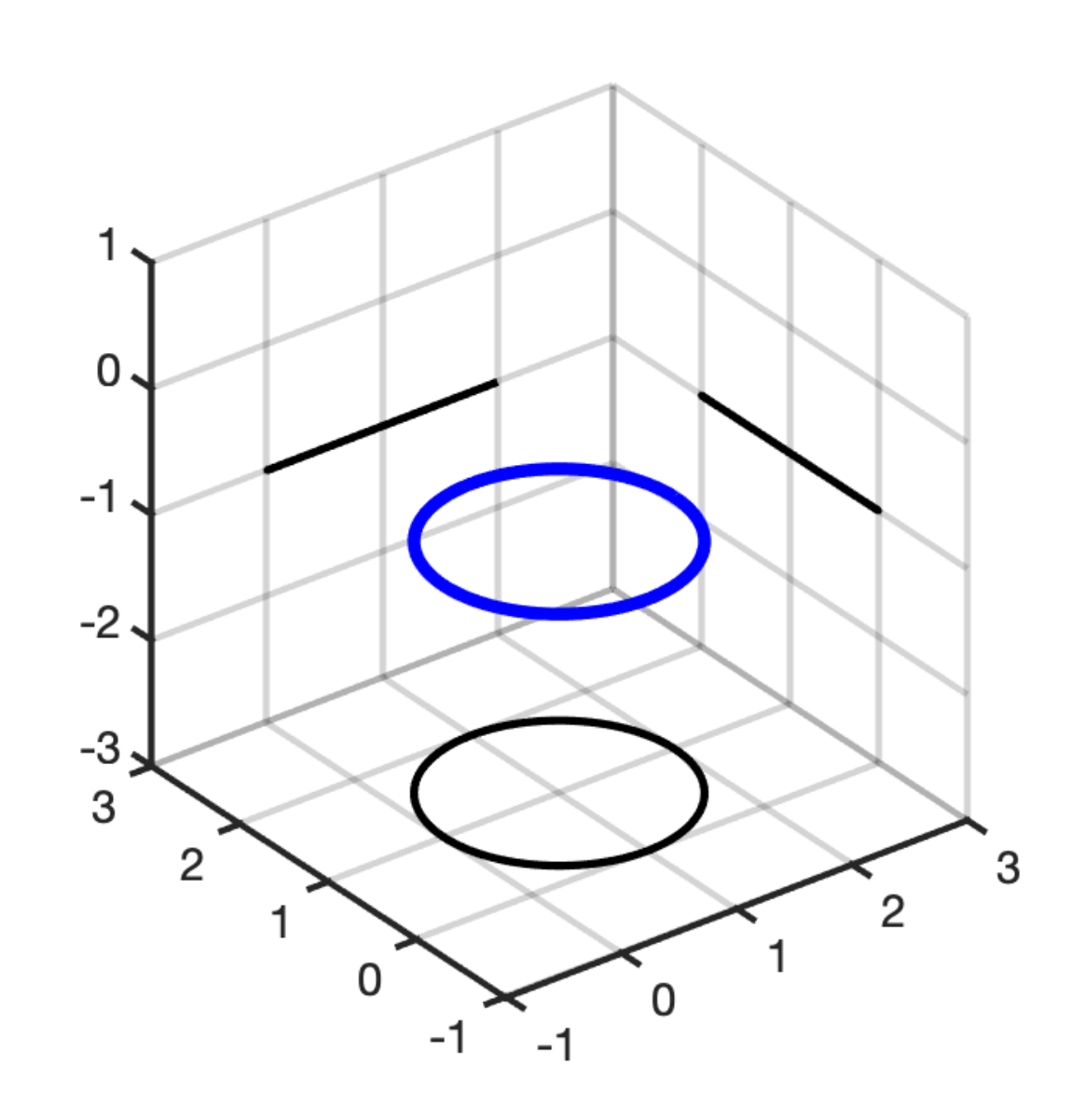}
  \end{subfigure}
  \begin{subfigure}[c]{0.33\textwidth}
    \includegraphics[width=1.\textwidth]{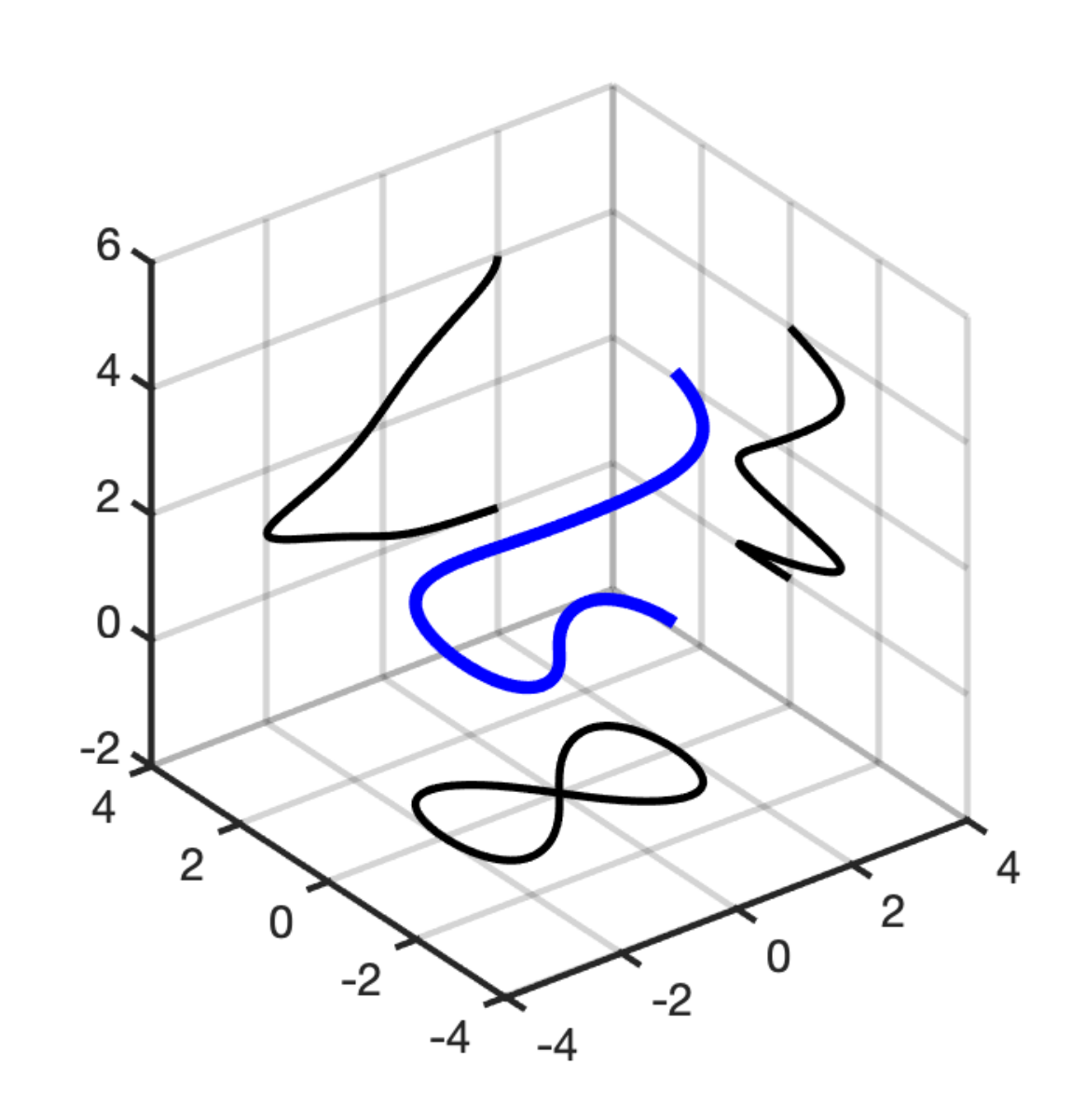}
  \end{subfigure}
  \begin{subfigure}[c]{0.33\textwidth}
    \includegraphics[width=1.\textwidth]{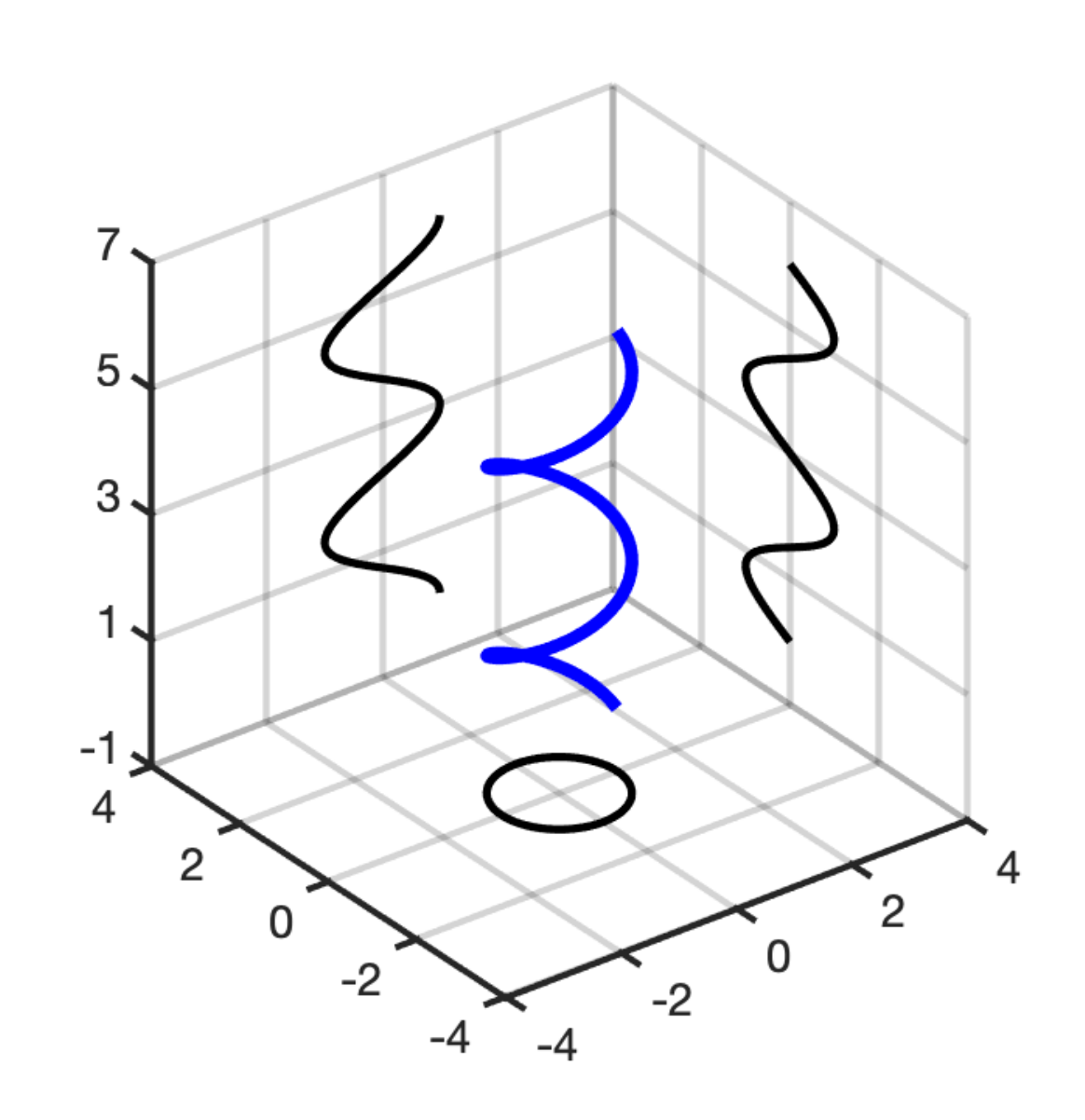}
  \end{subfigure}
  \caption{\small Center curves $K$ (solid blue) of $\Drho$ in
    Examples~\ref{exa:Geometry1} (left), \ref{exa:Geometry2}~(center),
    and \ref{exa:Geometry3}~(right).  Plots also show projections of
    $K$ onto coordinate planes (solid black).} 
  \label{fig:Geometries}
\end{figure}

\begin{example}
  \label{exa:Geometry1}
  In the first example $\Drho$ is a thin torus, where the center curve
  $K$ is a circle parametrized by $\bfp=(p_1,p_2,p_3)^\trans\in\Pcal$
  with 
  \begin{equation*}
    p_1(s) \,=\, \cos(2\pi s) + 1 \,, \quad
    p_2(s) \,=\, \sin(2\pi s) + 1 \,, \quad
    p_3(s) \,=\, -1 \,, \qquad s \in [0,1] \,,
  \end{equation*}
  as shown in Figure~\ref{fig:Geometries} (left).
  The cross-section $\Drho'$ is a disk of radius $\rho>0$, and the
  material parameters of the scattering object are described by the
  relative electric permittivity $\eps_r=2.5$ and the relative
  magnetic permeability $\mu_r=1.6$.~\hfill$\lozenge$ 
\end{example}

\begin{example}
  \label{exa:Geometry2}
  In the second example the scattering object $\Drho$ is a thin tube
  with a center curve parametrized by
  $\bfp=(p_1,p_2,p_3)^\trans\in\Pcal$ with
  \begin{equation*}
    p_1(s) 
    \,=\, 2\frac{\cos(2\pi s)}{1+\sin(2\pi s)^2} \,, \quad
    p_2(s) 
    \,=\, 4\frac{\cos(2\pi s)\sin(2\pi s)}{1+2\sin(2\pi s)^2}\,, \quad
    p_3(s) 
    \,=\, 4s^2 \,, \qquad s \in [0,1] \,,
  \end{equation*}
  as shown in Figure~\ref{fig:Geometries} (center). 
  The cross-section $\Drho'$ is a disk of radius $\rho>0$, and the
  material parameters of the scattering object in this example are 
  described by relative electric permittivity $\eps_r=1.0$ and 
  the relative magnetic permeability $\mu_r= 2.1$, i.e., there is no
  permittivity contrast.~\hfill$\lozenge$ 
\end{example}

\begin{example}
  \label{exa:Geometry3}
  In the third example the scattering object $\Drho$ is a thin tube
  with a center curve that is a two-turn helix parametrized by
  $\bfp=(p_1,p_2,p_3)^\trans\in\Pcal$ with
  \begin{equation*}
    p_1(s) \,=\, \cos(4\pi s) \,, \quad
    p_2(s) \,=\, \sin(4\pi s) \,, \quad
    p_3(s) \,=\, 6s \,, \qquad s \in [0,1] \,,
  \end{equation*}
  a shown in Figure~\ref{fig:Geometries} (right). 
  The cross-section $\Drho'$ is a disk of radius $\rho >0$, and
  the material parameters of the scattering object in this example are 
  described by relative electric permittivity $\eps_r=2.1$ and
  the relative magnetic permeability $\mu_r=1.0$, i.e., there is no
  permeability contrast.~\hfill$\lozenge$ 
\end{example}

\subsection{The accuracy of the asymptotic representation formula} 
We discuss the accuracy of the approximation of the electric far field
pattern $\Einftyrho$ by the leading order term $\Einftyrhotilde$ in
the asymptotic perturbation formula \eqref{eq:GenAsyEinfty}. 
To quantify the approximation error we consider the relative
difference 
\begin{equation}
  \label{eq:DefErr}
  \RelDiff
  \,:=\, \frac{\| \Einftyrhotilde - \Einftyrho \|_{L^2(\Stwo)}}
  {\| \Einftyrho \|_{L^2(\Stwo)}} \,.
\end{equation}
Since the exact far field pattern $\Einftyrho$ is unknown, we simulate
accurate reference far field data~$\Einftyrho$ using the C++ boundary
element library Bempp (see \cite{SmiBetArrPhi15}). 
For this purpose, we consider an integral equation formulation of the
electromagnetic scattering problem \eqref{eq:ScatteringProblem} that
is based on the  multitrace operator. 
The corresponding implementation in Bempp is described in detail
in~\cite{ScrBetBurSmiWou17}. 

To evaluate $\RelDiff$ numerically, we approximate the vector fields
$\Einftyrhotilde$ and $\Einftyrho$ on the equiangular grid on $\Stwo$
from \eqref{eq:StwoGrid} with $N=10$, and accordingly we discretize
the $L^2$-norms in \eqref{eq:DefErr} using a composite trapezoid
rule in horizontal and vertical direction. 
We discuss the following two questions:
\begin{enumerate}
\item[(i)] How many spline segments and how many quadrature points per
  spline segment are sufficient in the approximation $\bfp_\tri$ of
  the center curve $K$ that is used to evaluate $\Einftyrhotilde$
  numerically, to obtain a reasonably good approximation of
  $\Einftyrho$? 
\item [(ii)] How small does the radius $\rho>0$ of the thin tubular
  scattering object $\Drho$ have to be in order that the leading order 
  term~$\Einftyrhotilde$ in the asymptotic perturbation formula
  \eqref{eq:GenAsyEinfty} is a sufficiently good approximation
  of~$\Einftyrho$? 
\end{enumerate}

\begin{figure}[t]
  \begin{subfigure}[c]{0.32\textwidth}
    \includegraphics[width=1.\textwidth]{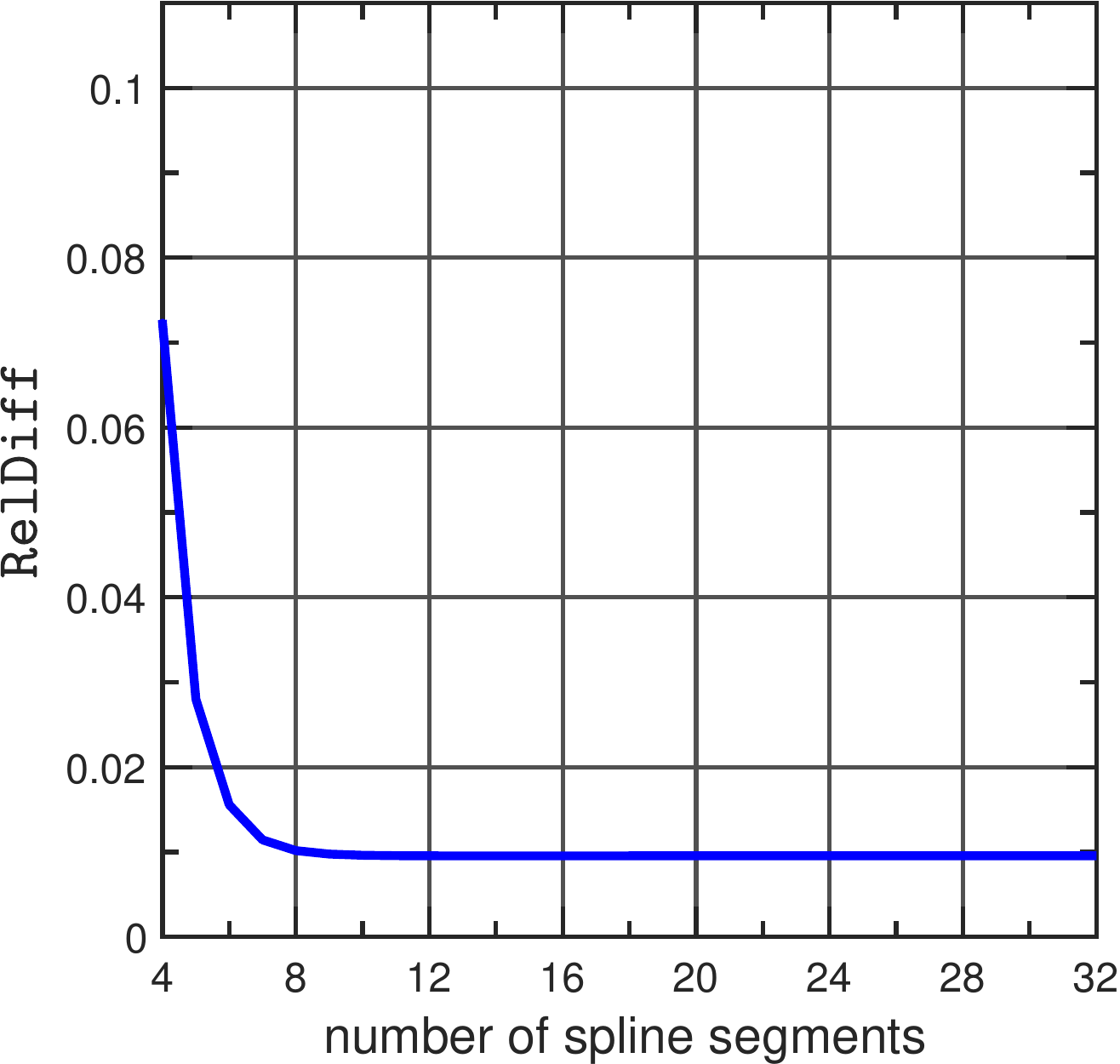}
  \end{subfigure}\,
  \begin{subfigure}[c]{0.32\textwidth}
    \includegraphics[width=1\textwidth]{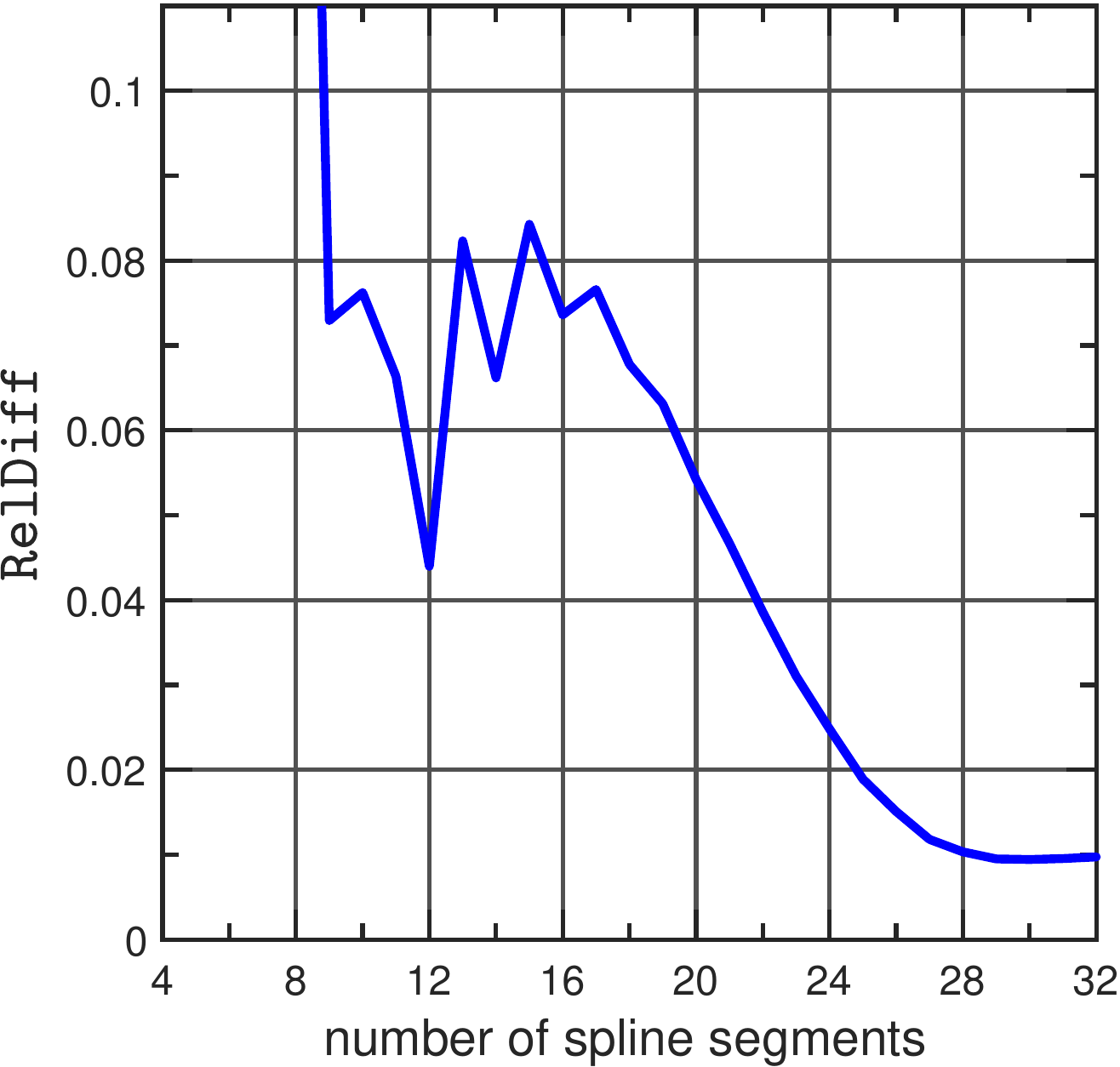}
  \end{subfigure}\,
  \begin{subfigure}[c]{0.32\textwidth}
    \includegraphics[width=1\textwidth]{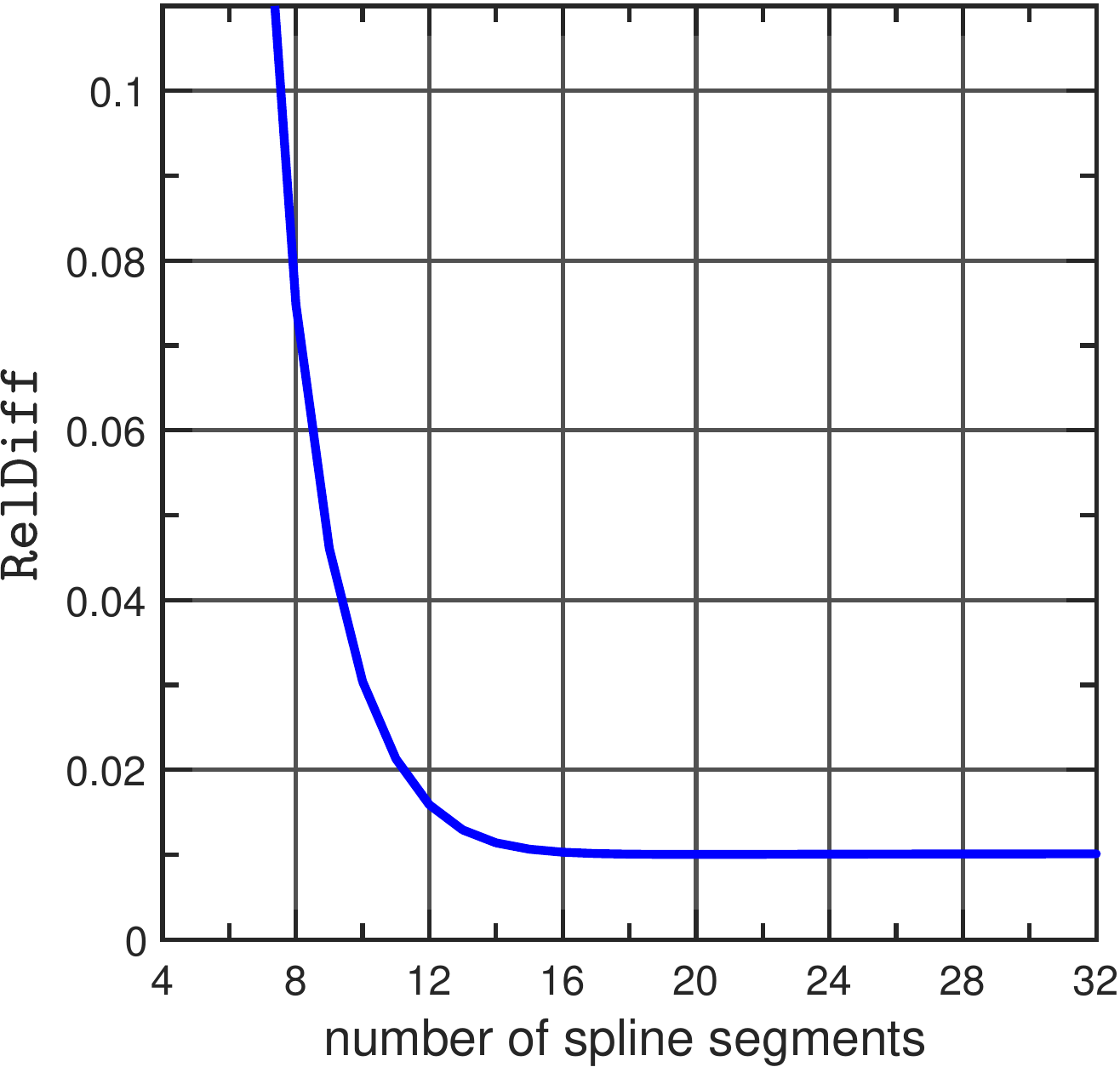}
  \end{subfigure}
  \caption{\small Relative difference $\RelDiff$ between $\Einftyrho$ and the
    leading order term $\Einftyrhotilde$ in \eqref{eq:DefErr} as a 
    function of the number of subsegments in the spline approximation
    $\bfp_\tri$ of the center curve $K$ for
    Examples~\ref{exa:Geometry1} (left), \ref{exa:Geometry2} (center),
    and \ref{exa:Geometry3} (right) with radius $\rho=0.03$.} 
  \label{fig:Accuracy1}
\end{figure}
Concerning the first question, we consider the scattering objects
in Examples~\ref{exa:Geometry1}, \ref{exa:Geometry2},
and \ref{exa:Geometry3} with radius $\rho=0.03$, and we evaluate
reference far field data $\Einftyrho$ using Bempp as described above.
In the corresponding Galerkin boundary element discretization we use a
triangulation of the boundary of the scatterer $\di\Drho$ with 
26698 triangles for Example~\ref{exa:Geometry1}, 
62116 triangles for Example~\ref{exa:Geometry2}, and
62116 triangles for Example~\ref{exa:Geometry3}. 
Then we consider a sequence of increasingly fine equidistant
partitions $\tri$ of $[0,1]$ as in \eqref{eq:DefTri}, we evaluate
$\Einftyrhotilde$ from \eqref{eq:Einftylead}, and we study
the decay of the relative difference $\RelDiff$ from \eqref{eq:DefErr}
as a function of the number of subsegments of the spline
approximations $\bfp_\tri$ of the center curves~$K$. 
We approximate the integrals in~\eqref{eq:Einftylead} using a
composite Simpson's rule with a fixed number of $M=11$ nodes on each
subinterval of $\tri$. 
The results of these tests are shown in Figure~\ref{fig:Accuracy1}. 
In each example the relative error decreases quickly until it reaches
its minimum value. 
Due to the asymptotic character of the expansion
\eqref{eq:GenAsyEinfty}, and due to numerical error in the numerical
approximation of $\Einftyrho$ obtained by Bempp, we do not expect the
relative error $\RelDiff$ to decay to zero.
A relatively low number of spline segments suffices in all three
examples to obtain less than $2\%$ relative difference. 
Of course this number depends on the shape of the center curve.
We note that while the simulation of $\Einftyrho$ using Bempp is
computationally quite demanding, the evaluation of $\Einftyrhotilde$
using \eqref{eq:Einftylead} is simple and extremely fast.

\begin{figure}[t]
  \begin{subfigure}[c]{0.32\textwidth}
    \includegraphics[width=1.\textwidth]{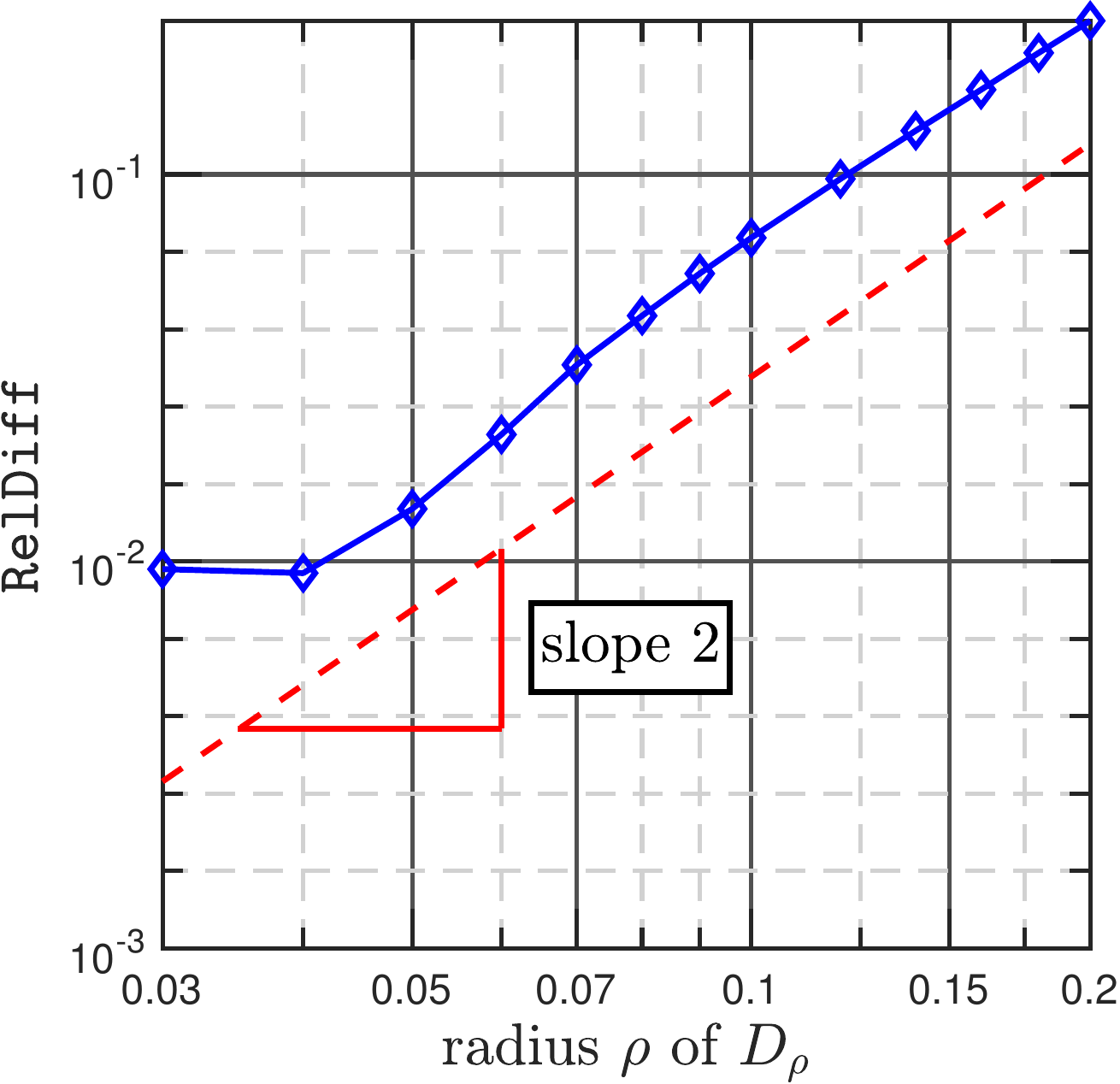}
  \end{subfigure}\,
  \begin{subfigure}[c]{0.32\textwidth}
    \includegraphics[width=1\textwidth]{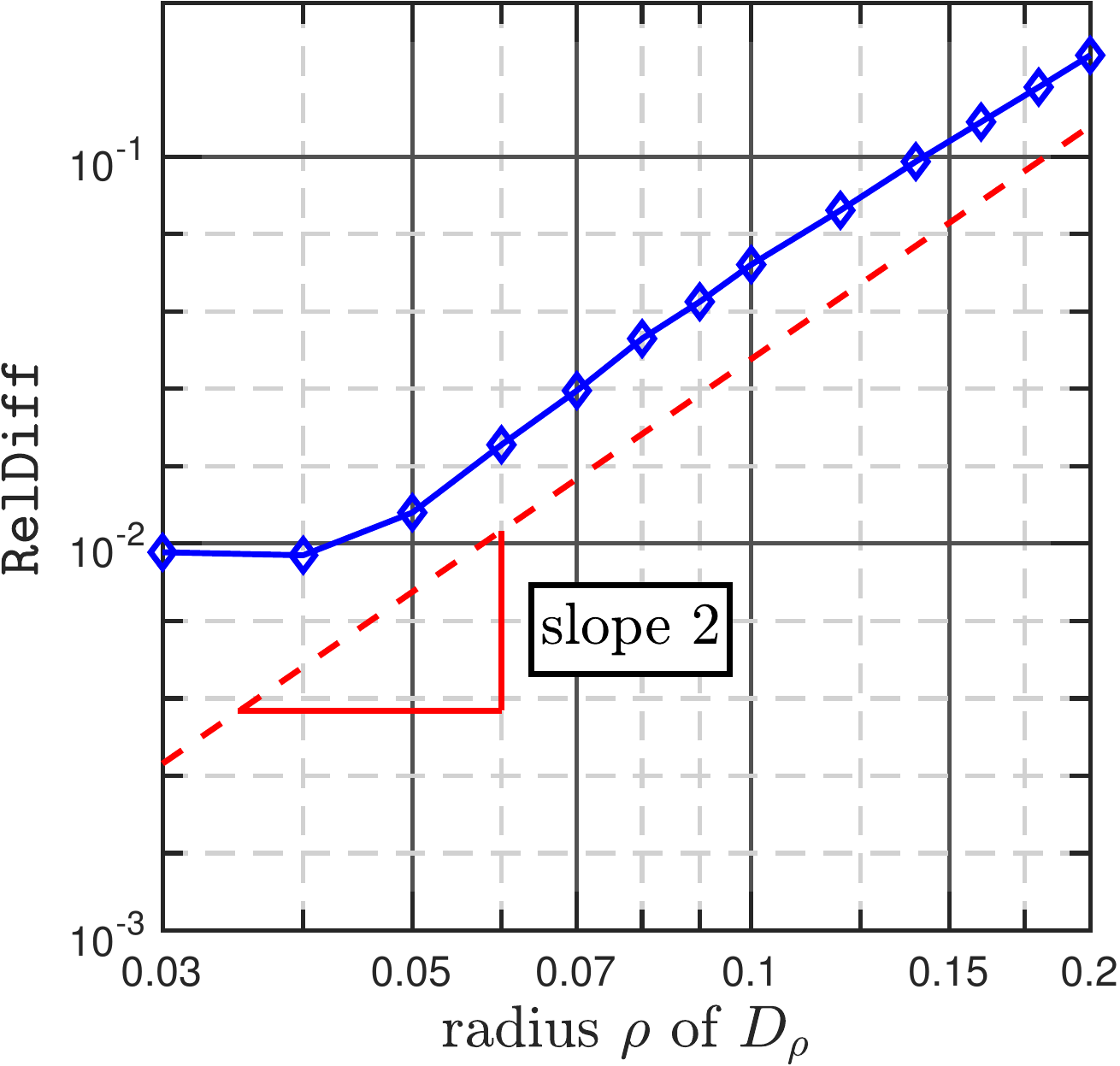}
  \end{subfigure}\,
  \begin{subfigure}[c]{0.32\textwidth}
    \includegraphics[width=1\textwidth]{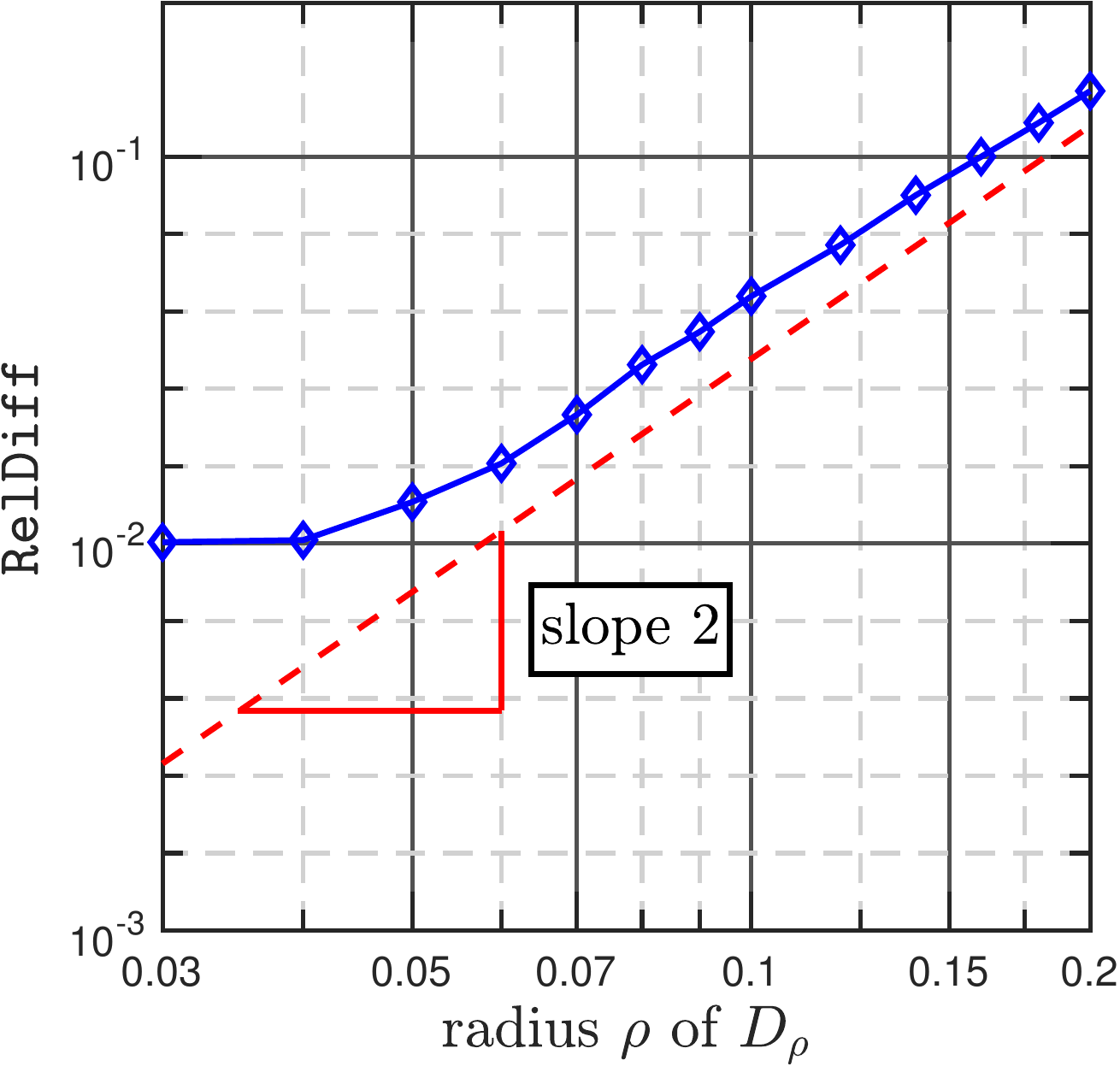}
  \end{subfigure}
  \caption{\small Relative difference $\RelDiff$ (solid blue) between
    $\Einftyrho$ and the leading order term $\Einftyrhotilde$ in
    \eqref{eq:DefErr} as a function of the radius $\rho$ of the thin
    tubular scatterer $\Drho$ for Examples~\ref{exa:Geometry1} (left),
    \ref{exa:Geometry2} (center), and \ref{exa:Geometry3} (right).
    For comparison the plot contains a line of slope $2$ (dashed red).} 
  \label{fig:Accuracy2}
\end{figure}
Concerning the second question from above we again generate reference
far field data $\Einftyrho$ for the thin tubular scattering objects in
Examples~\ref{exa:Geometry1}--\ref{exa:Geometry3} using Bempp, but now
for a whole range of radii 
\begin{equation*}
  \rho 
  \in \{ 0.03,\, 0.04,\, 0.05,\, 0.06,\, 0.07,\, 0.08,\, 0.09,\, 
  0.1,\, 0.12,\, 0.14,\, 0.16,\, 0.18,\, 0.2 \} \,.
\end{equation*}
Here we use increasingly fine triangulations of the boundaries of the
scatterers $\di\Drho$.
We also evaluate the approximations $\Einftyrhotilde$ for these values
of $\rho$ using $29$ spline segments in the spline
approximations~$\bfp_\tri$ of the center curves $K$. 
In Figure~\ref{fig:Accuracy2} we show plots of the relative difference
$\RelDiff$ as a function of $\rho$ (solid blue) for these three
examples. 
For comparison these plots contain a line of slope $2$ (dashed red).
The relative error decays approximately of order $\Ocal(\rho^2)$. 
We note that our theoretical results in
Theorem~\ref{thm:GeneralAsymptotics} do not predict any rate of
convergence.

\subsection{Reconstruction of the center curve $K$ using
  Algorithm~\ref{alg:reconstruction}}  
We return to the inverse problem and apply
Algorithm~\ref{alg:reconstruction} to reconstruct the center 
curves $K$ of the thin tubular scattering objects~$\Drho$ in
Examples~\ref{exa:Geometry1}--\ref{exa:Geometry3} from observations of
a single electric far field pattern $\Einftyrho$.
In all three examples the radius of the scattering object $\Drho$
is $\rho=0.03$.
The previous examples show that $\Einftyrho$ is well approximated by
$\Einftyrhotilde$ in this regime.
We assume that the plane wave incident field $\Ei$ (i.e., the wave
number $k$, the direction of propagation $\bftheta$, and the
polarization~$\bfA$), the shape and the radius of the cross-sections
$\Drho'$ of the scattering objects (i.e., in particular $\rho$) and
the material parameters of the scattering objects (i.e., the relative
electric permittivity $\eps_r$ and the relative magnetic permeability
$\mu_r$) are known \emph{a priori}. 

We simulate the far field data $\Einftyrho$ for each of the three
examples using Bempp, where we use triangulations of the boundaries of 
the tubes $\di\Drho$ with 
26698 triangles for Example~\ref{exa:Geometry1}, 
62116 triangles for Example~\ref{exa:Geometry2}, and
62116 triangles for Example~\ref{exa:Geometry3}. 
The values of $\Einftyrho$ are evaluated on the equiangular grid on
$\Stwo$ from \eqref{eq:StwoGrid} with $N=10$. 

We choose the following parameters in
Algorithm~\ref{alg:reconstruction}: 
\begin{itemize}
\item We use $n=30$ control points (i.e. $29$ spline segments) in
  the spline approximation $\bfp_\tri$ of the unknown center curve $K$.
\item We initialize the regularization parameters in step~2 by
  $\alpha_1 = 0.2$ and $\alpha_2 = 0.9$. 
\item We choose $\smax=1$ in the golden section line search in
  step~5 and we terminate each line search after a fixed number of $10$
  steps. 
\end{itemize}

The results are shown in Figures~\ref{fig:Recon1}--\ref{fig:Recon3}.
Here, the top-left plots show the initial guess, and the bottom-right
plots show the final reconstruction.
The remaining four plots show intermediate approximations of the
iterative reconstruction procedure. 
Each plot contains the exact center curve $K$ (solid blue) and the
current approximation $\bfp_{\tri,\ell}$ of the reconstruction
algorithm after $\ell$ iterations (solid red with stars).
Furthermore, we have included projections of these curves onto the
three coordinate planes to enhance the three-dimensional perspective.  

\begin{example} 
  \label{exa:Recon1}
  \begin{figure}[t]
    \centering
    \begin{subfigure}{.32\textwidth}
      \includegraphics[width=1\textwidth]{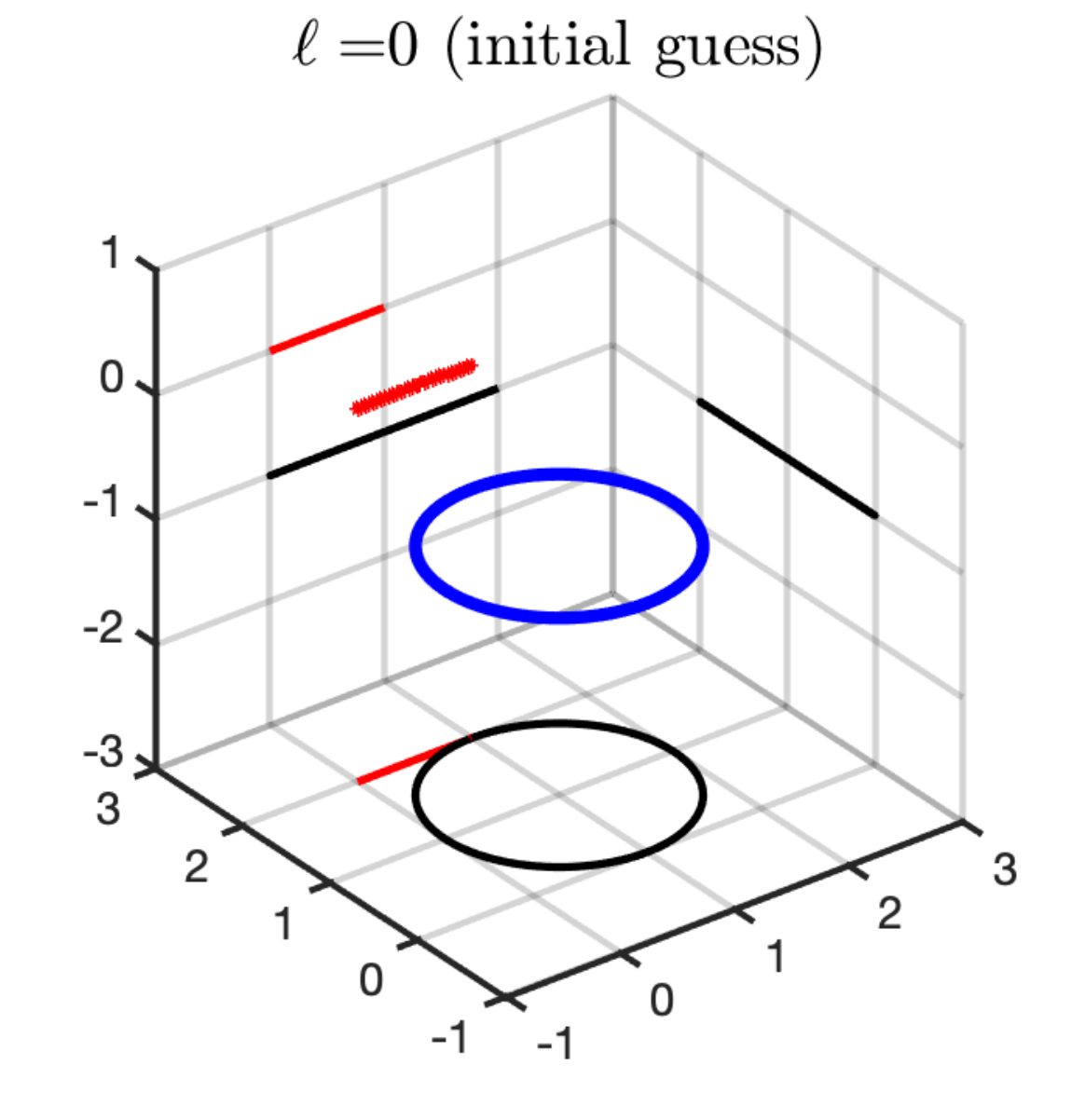}
    \end{subfigure}
    \begin{subfigure}{.32  \textwidth}           
      \includegraphics[width=1.\textwidth]{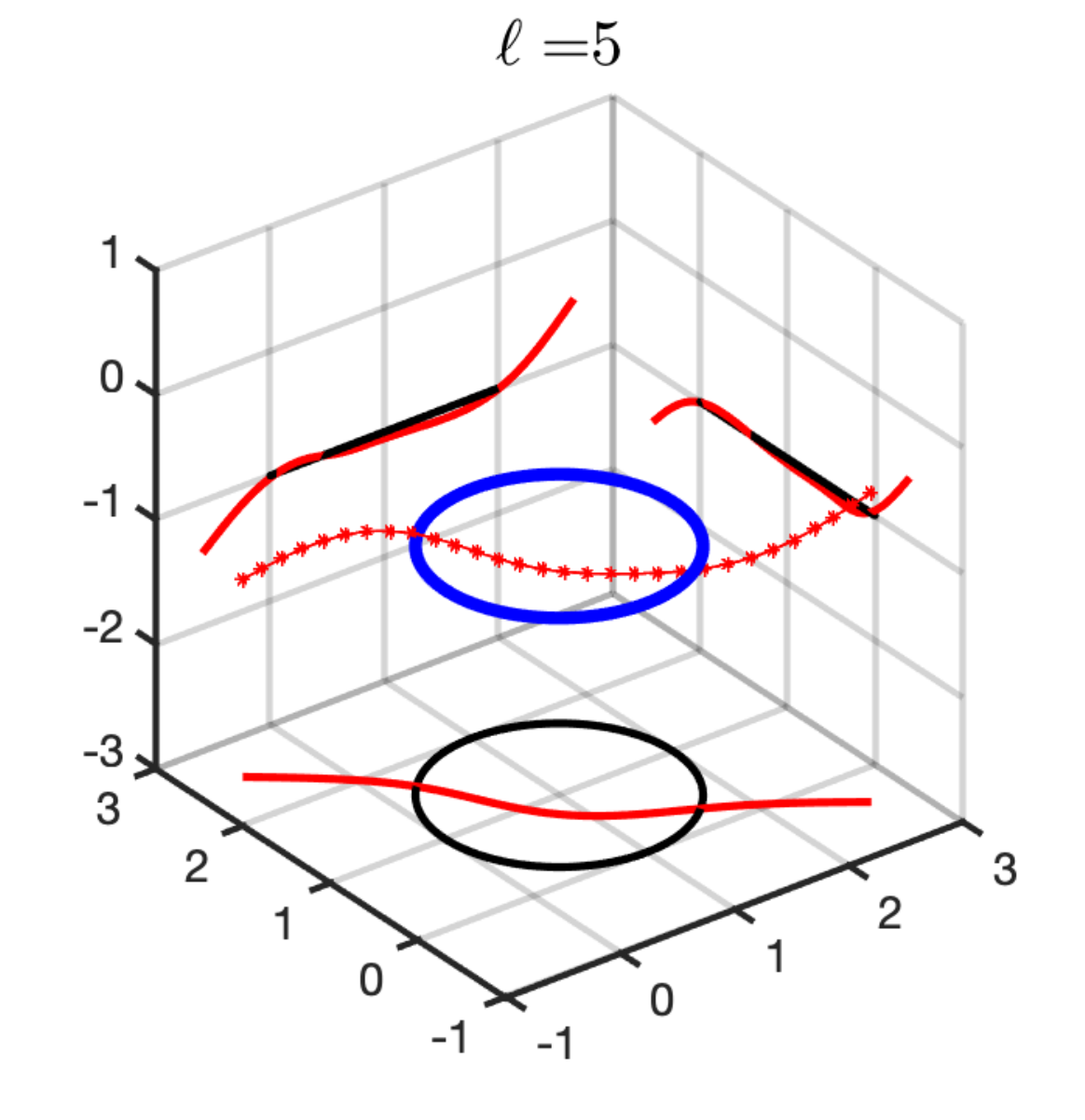}
    \end{subfigure}
    \begin{subfigure}{.32\textwidth}
      \includegraphics[width=1.\textwidth]{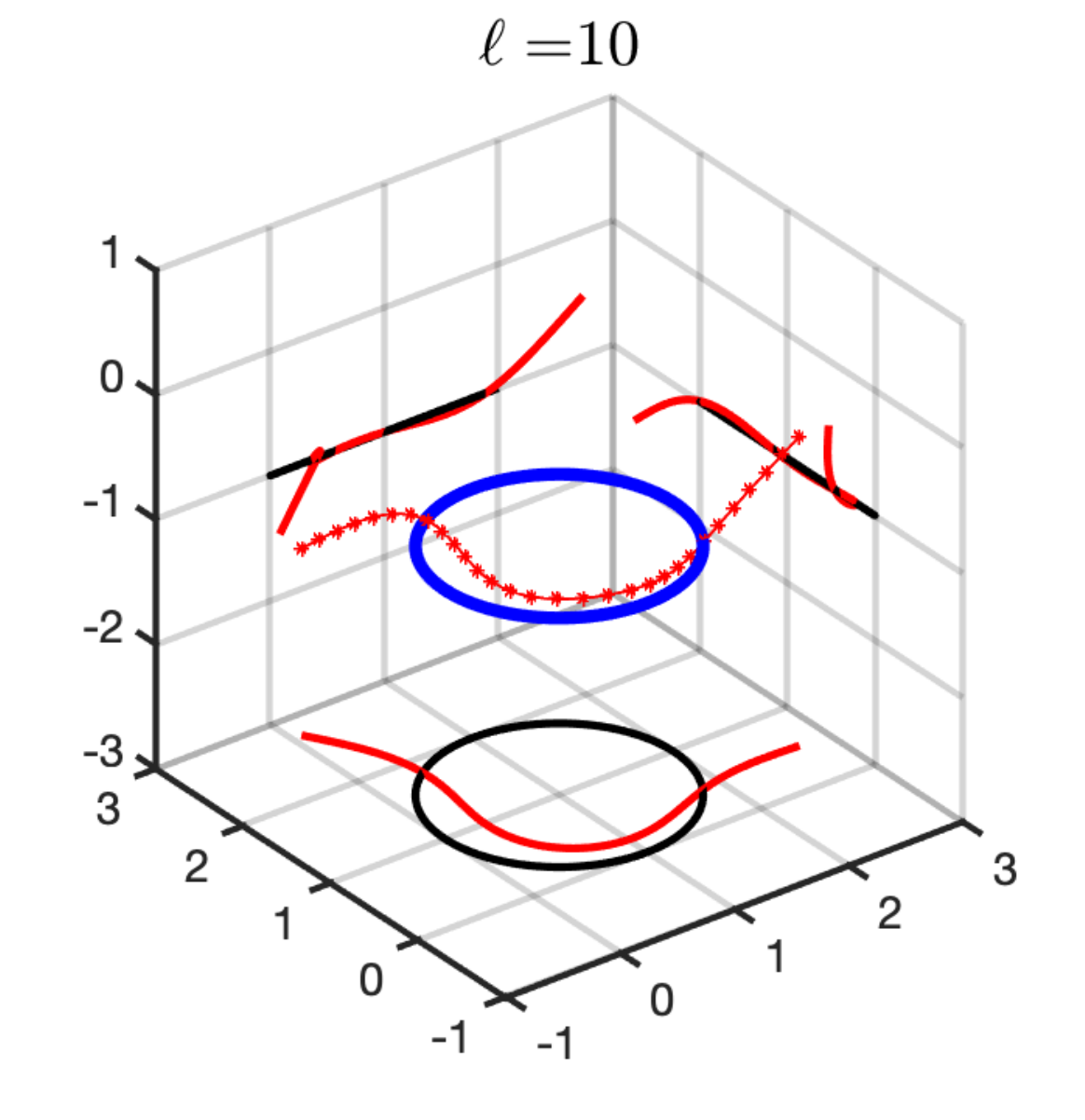}
    \end{subfigure}
    \begin{subfigure}{.32\textwidth}
      \includegraphics[width=1.\textwidth]{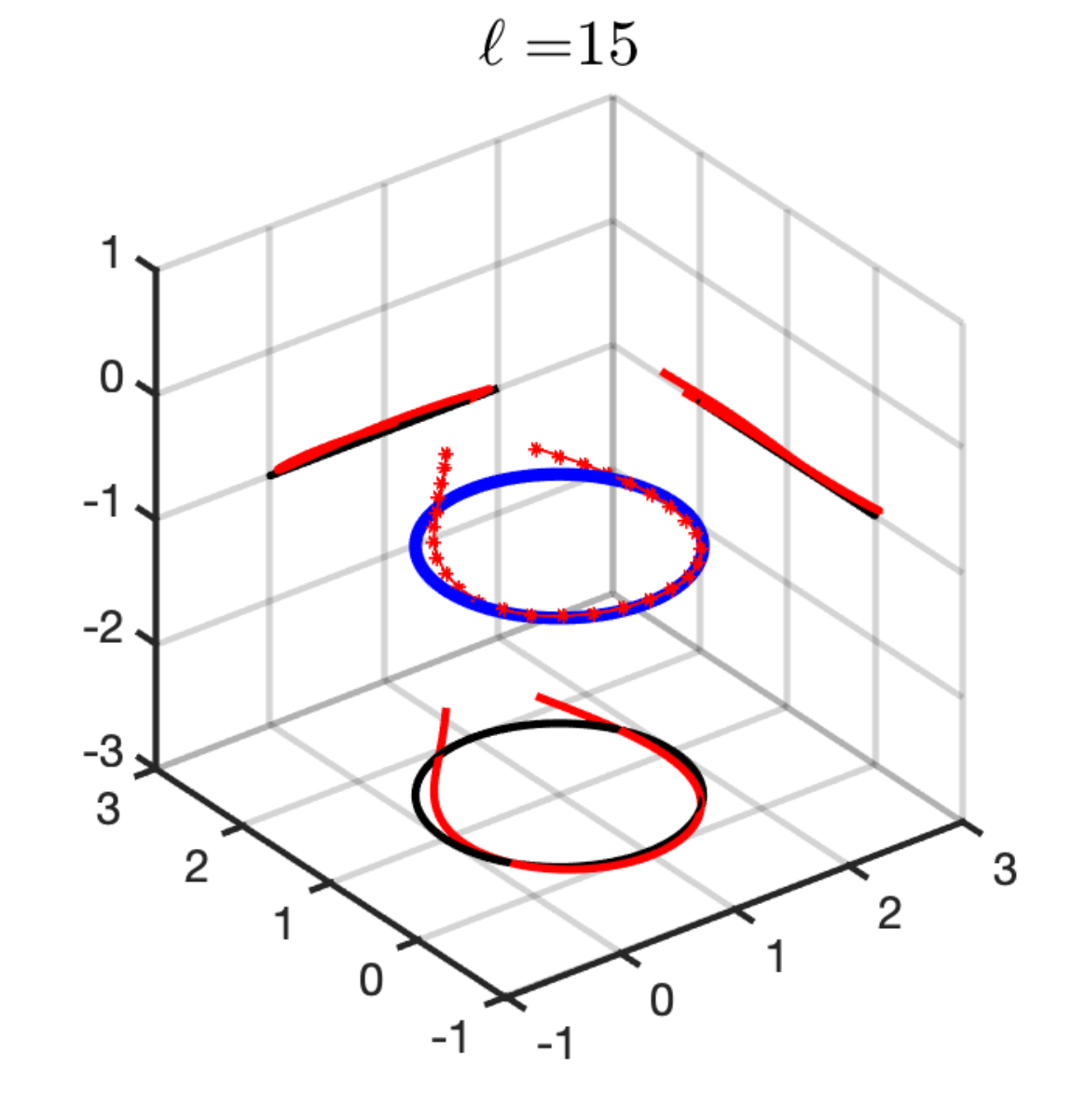}
    \end{subfigure}
    \begin{subfigure}{.32\textwidth}
      \includegraphics[width=1.\textwidth]{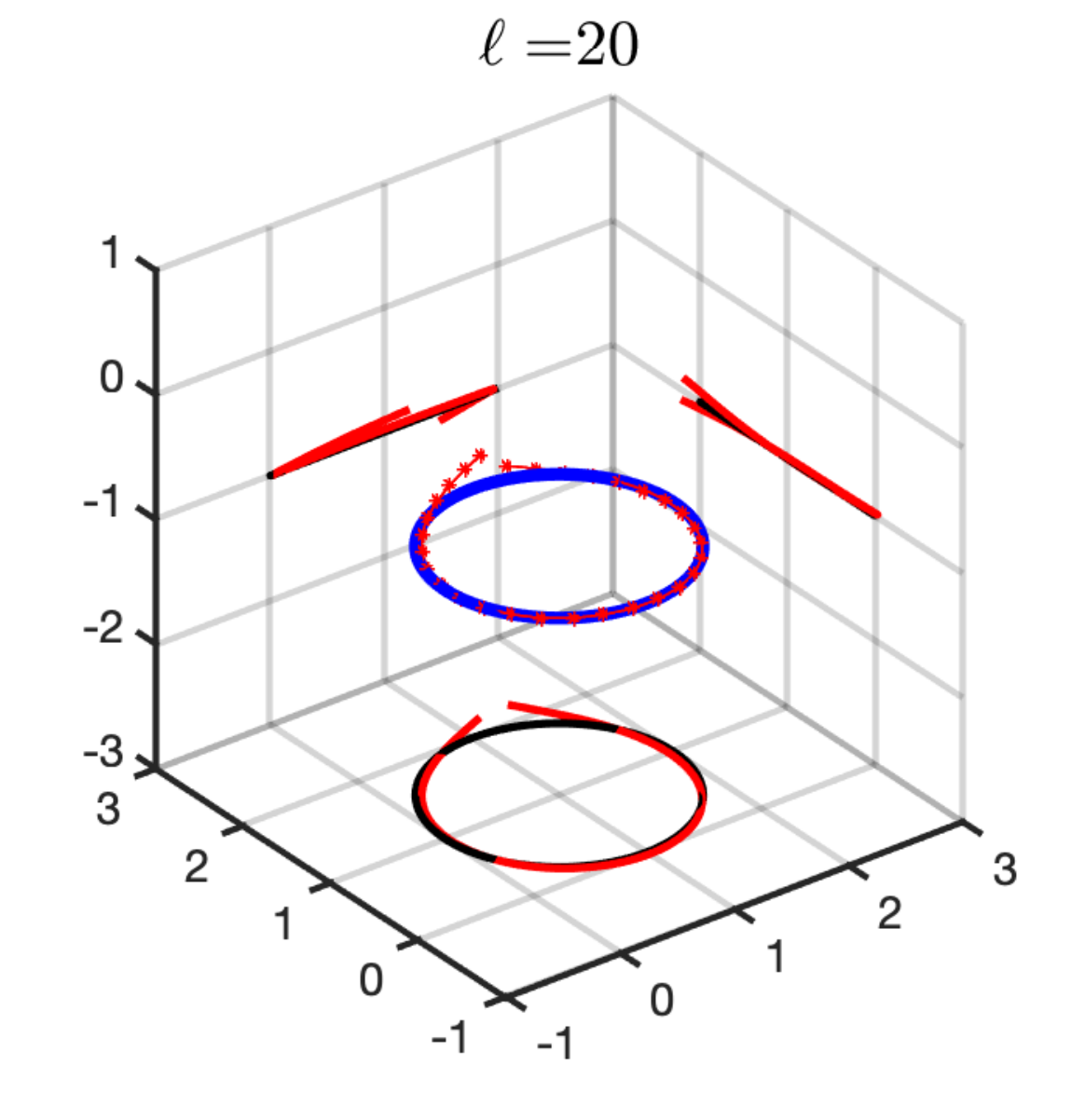}
    \end{subfigure}
    \begin{subfigure}{.32\textwidth}
      \includegraphics[width=1.\textwidth]{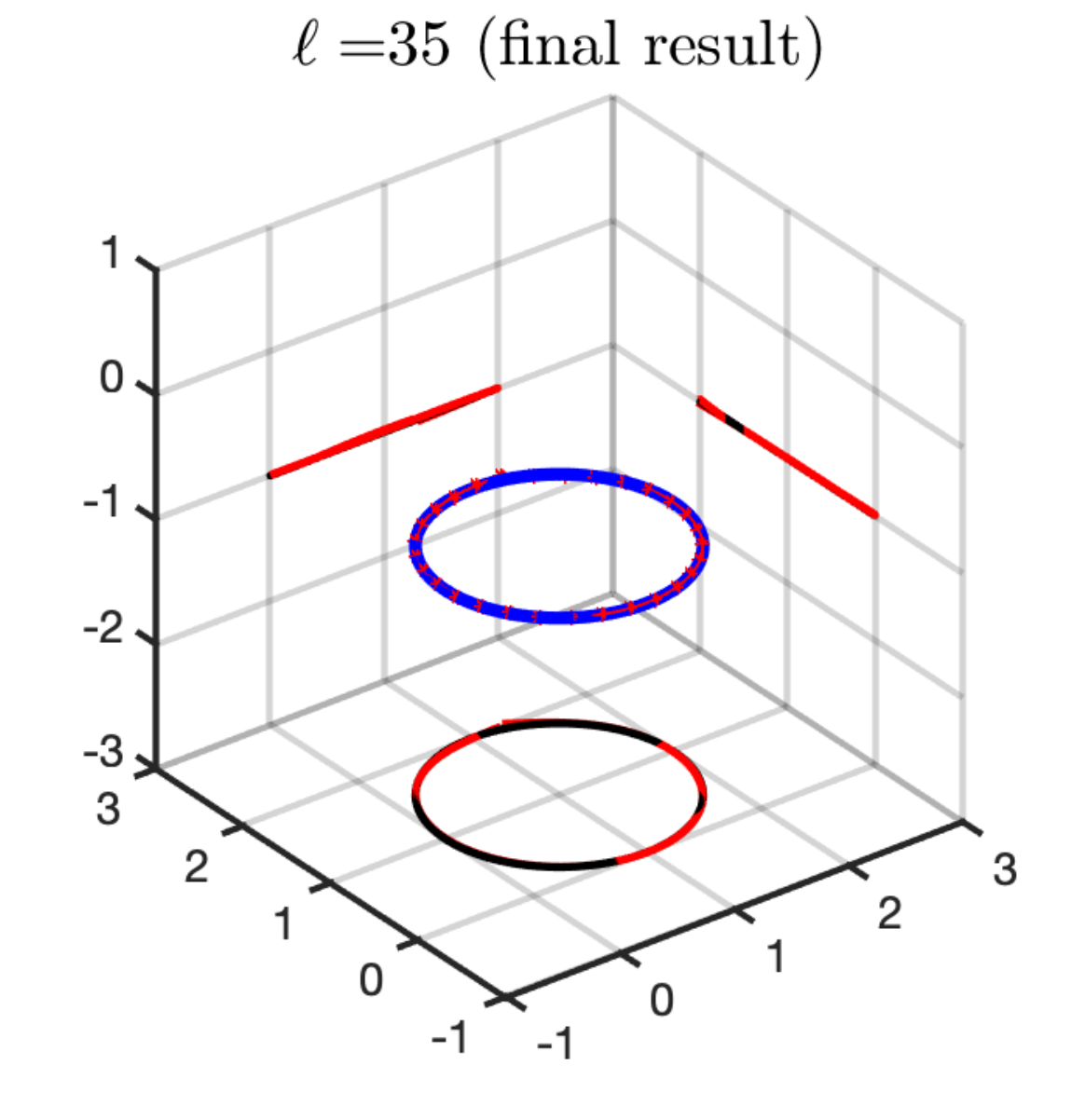}
    \end{subfigure}
    \caption{\small Reconstruction of the toroidal scatterer from
      Example~\ref{exa:Geometry1}. 
      The top-left plot shows the initial guess, and the
      bottom-right plot shows the final reconstruction.} 
    \label{fig:Recon1}
  \end{figure}
  We consider the setting from Example~\ref{exa:Geometry1}. 
  The initial guess is a straight line segment connecting the points 
  $[0,2,0]^\top$ and $[1,2,0]^\top$.
  The reconstruction algorithm stops after $35$ iterations.
  The initial guess, some intermediate steps and the final result of
  the reconstruction algorithm are shown in Figure~\ref{fig:Recon1}.
  The final reconstruction is very close to the exact center
  curve~$K$.~\hfill$\lozenge$ 
\end{example} 

\begin{example}
  \label{exa:Recon2}
  \begin{figure}[t]
    \centering
    \begin{subfigure}{.32\textwidth}
      \includegraphics[width=1\textwidth]{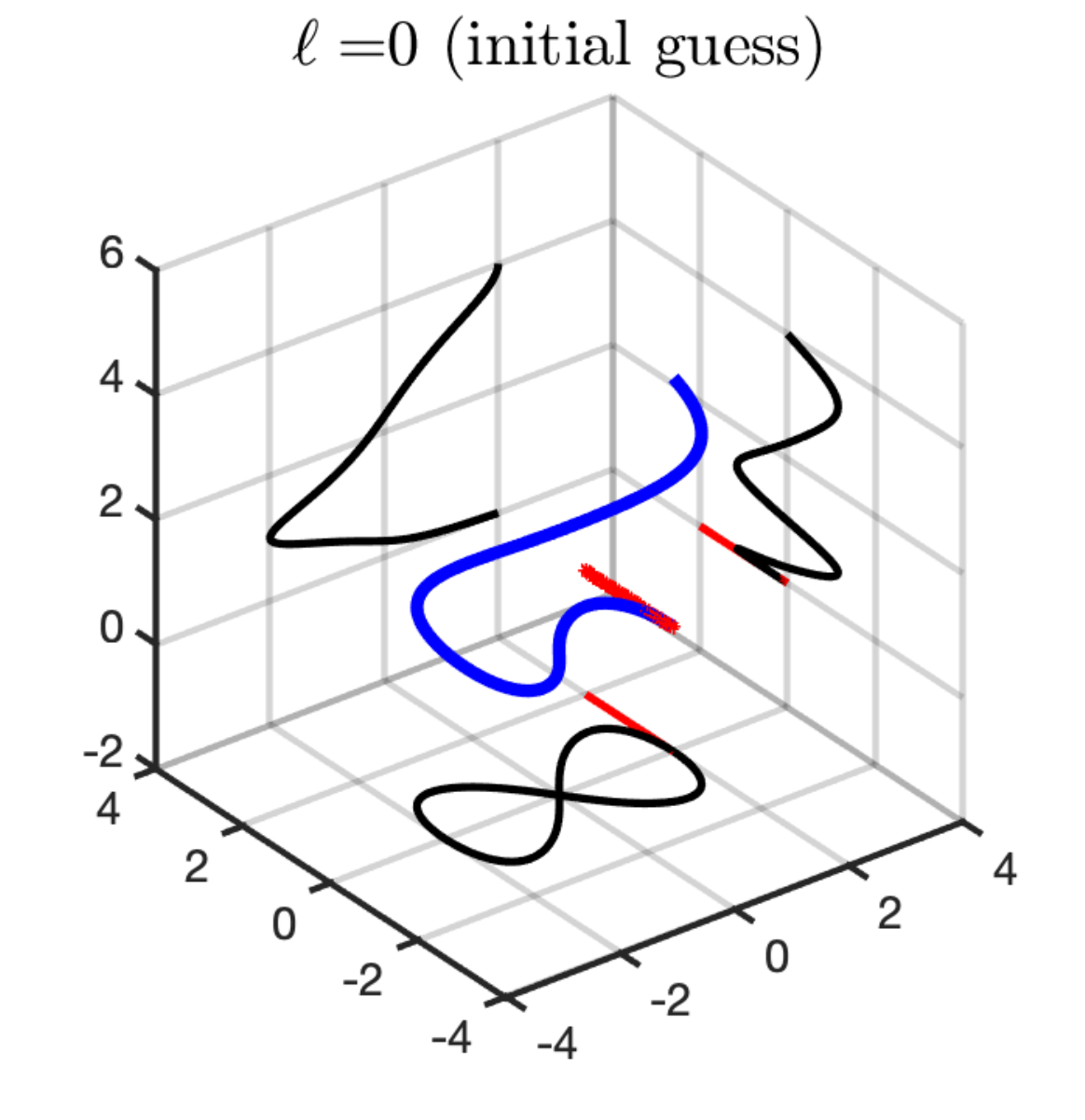}
    \end{subfigure}
    \begin{subfigure}{.32  \textwidth}           
      \includegraphics[width=1.\textwidth]{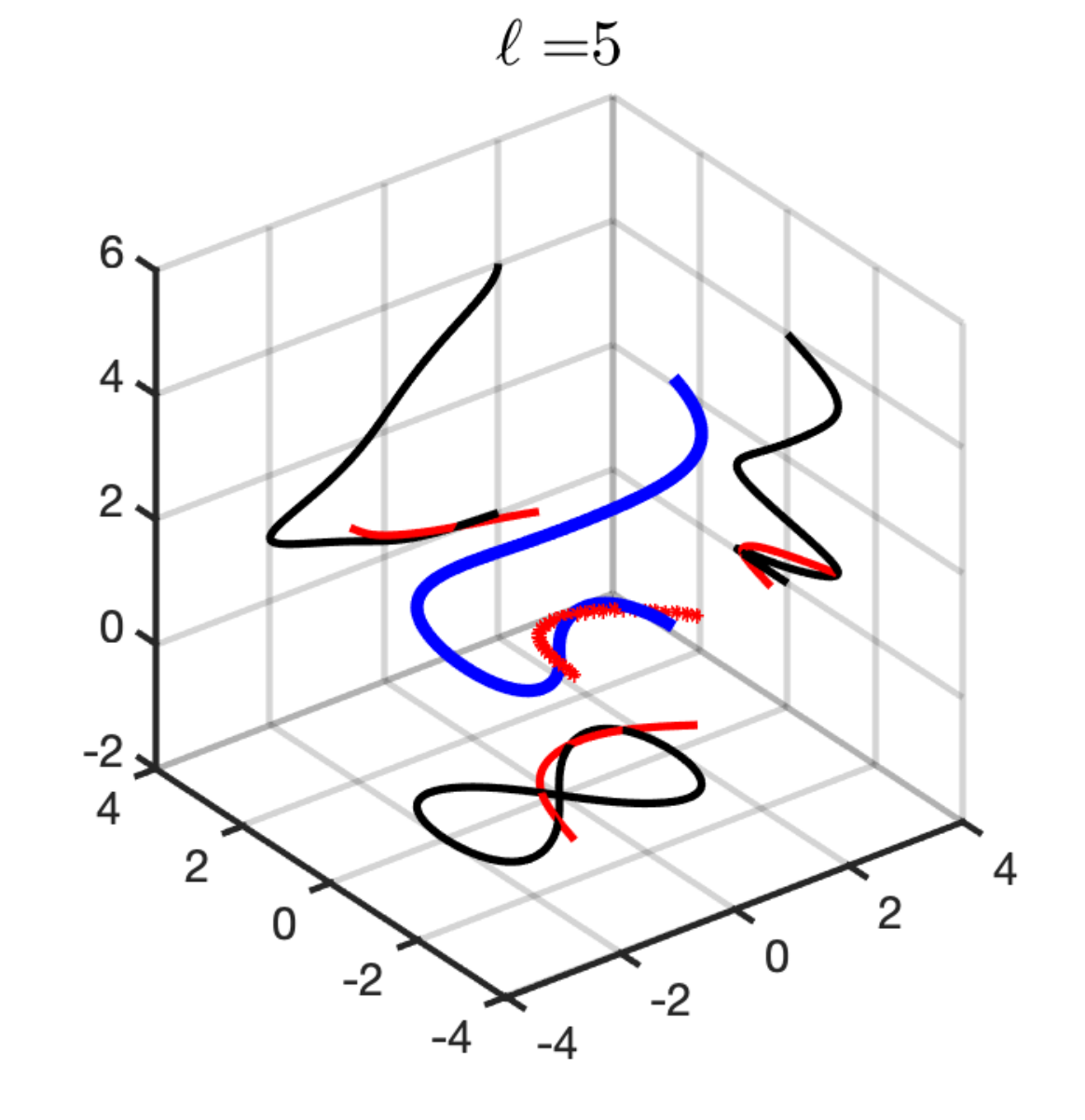}
    \end{subfigure}
    \begin{subfigure}{.32\textwidth}
      \includegraphics[width=1.\textwidth]{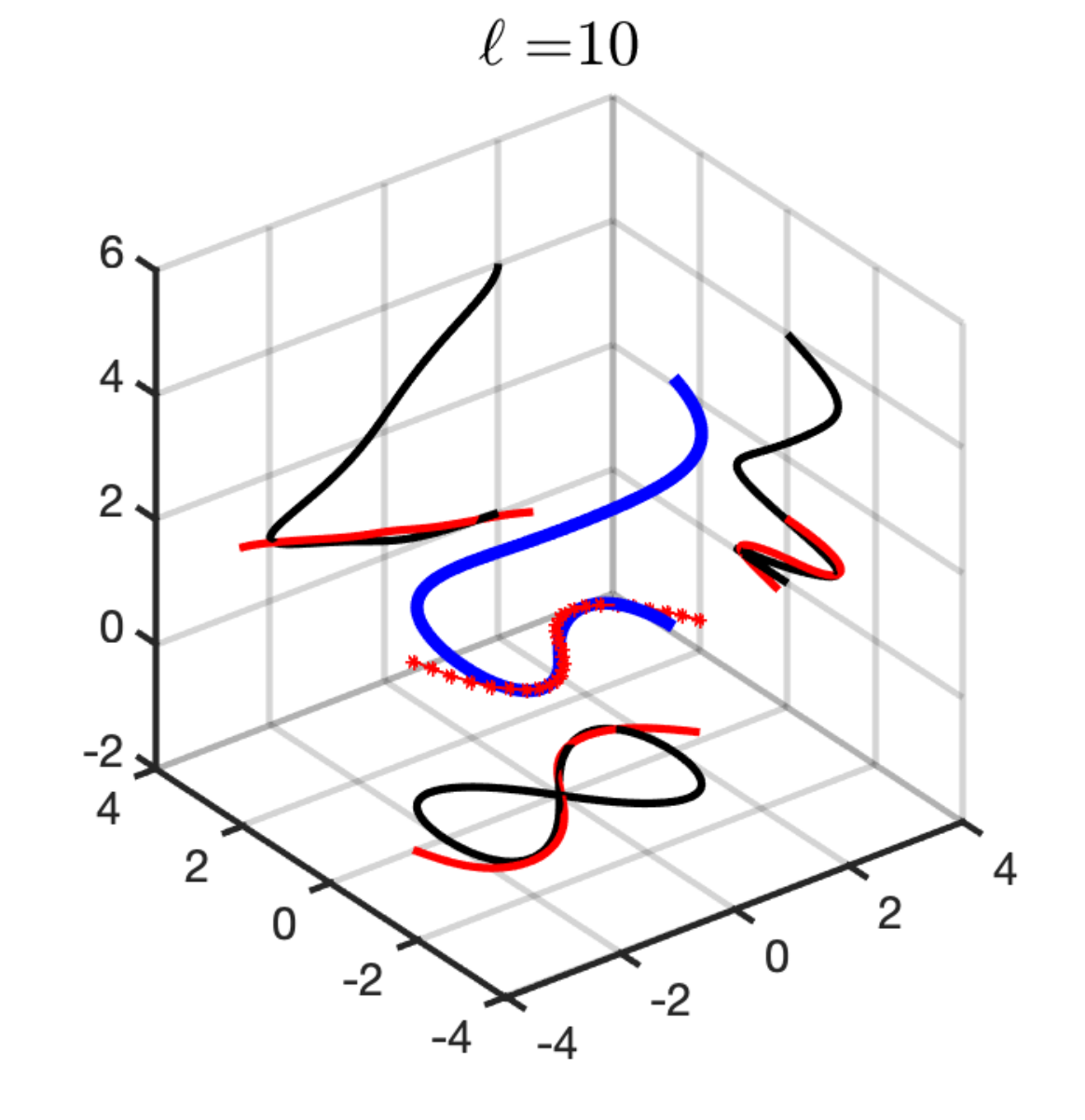}
    \end{subfigure}
    \begin{subfigure}{.32\textwidth}
      \includegraphics[width=1.\textwidth]{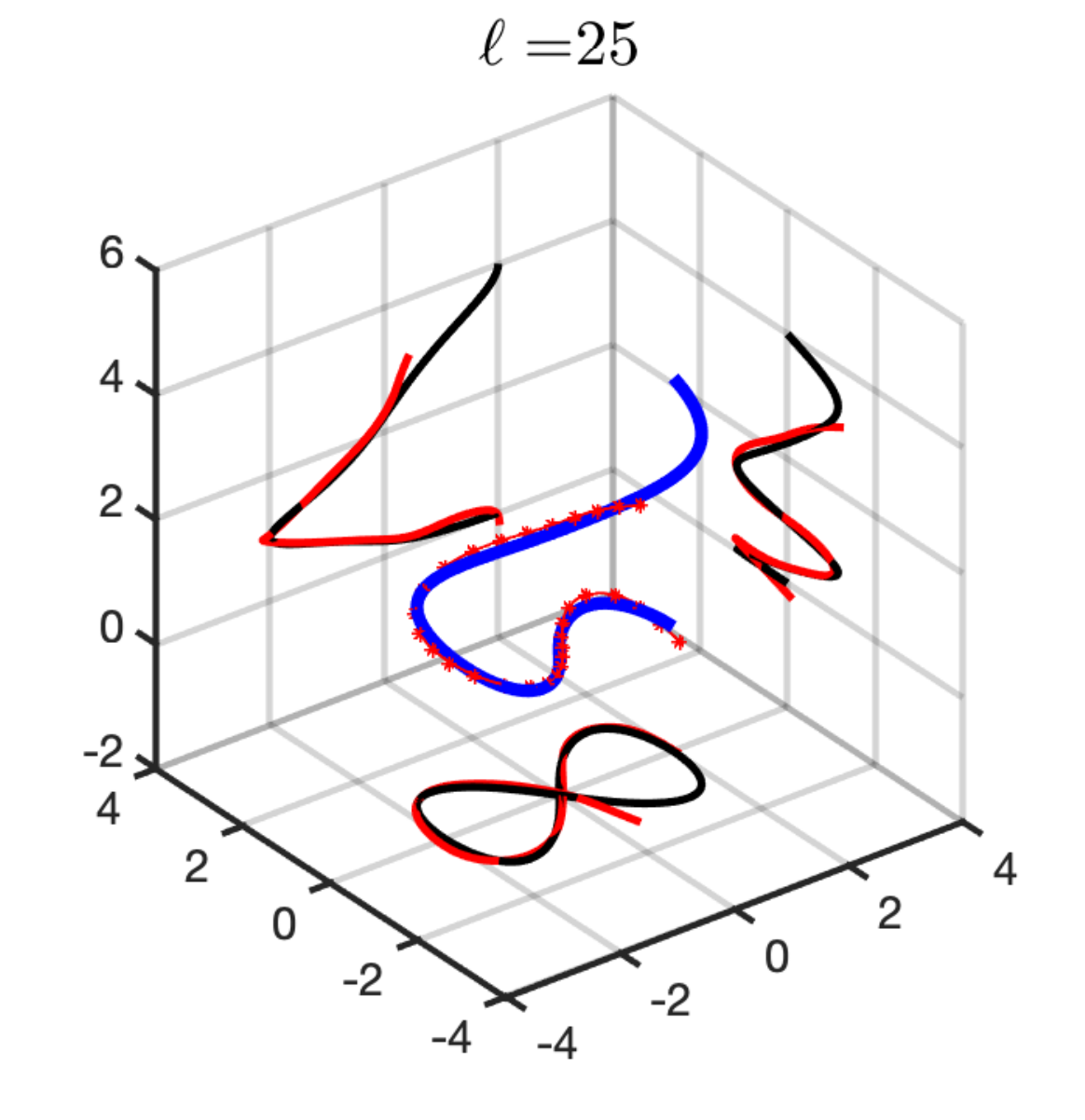}
    \end{subfigure}
    \begin{subfigure}{.32\textwidth}
      \includegraphics[width=1.\textwidth]{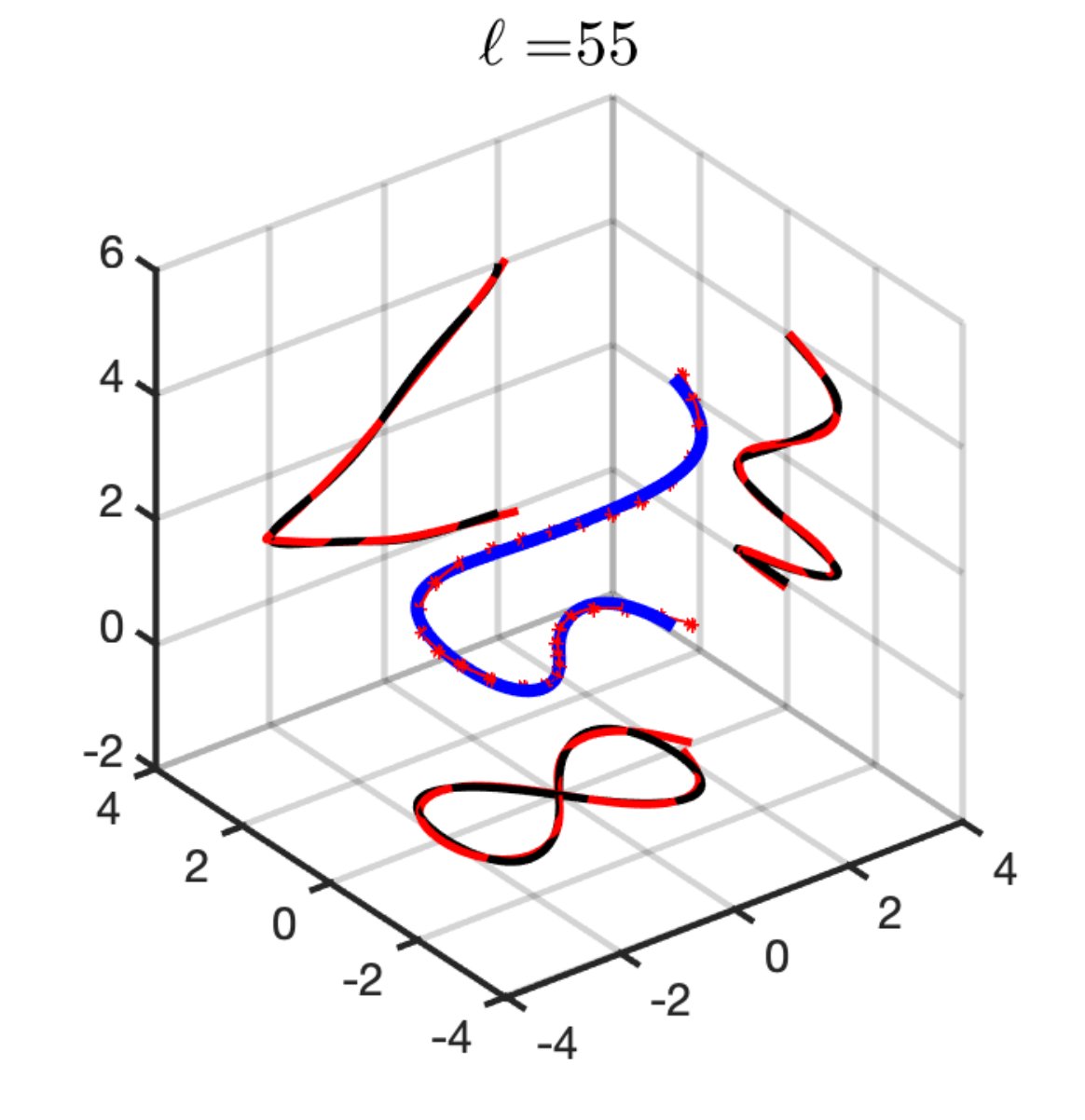}
    \end{subfigure}
    \begin{subfigure}{.32\textwidth}
      \includegraphics[width=1.\textwidth]{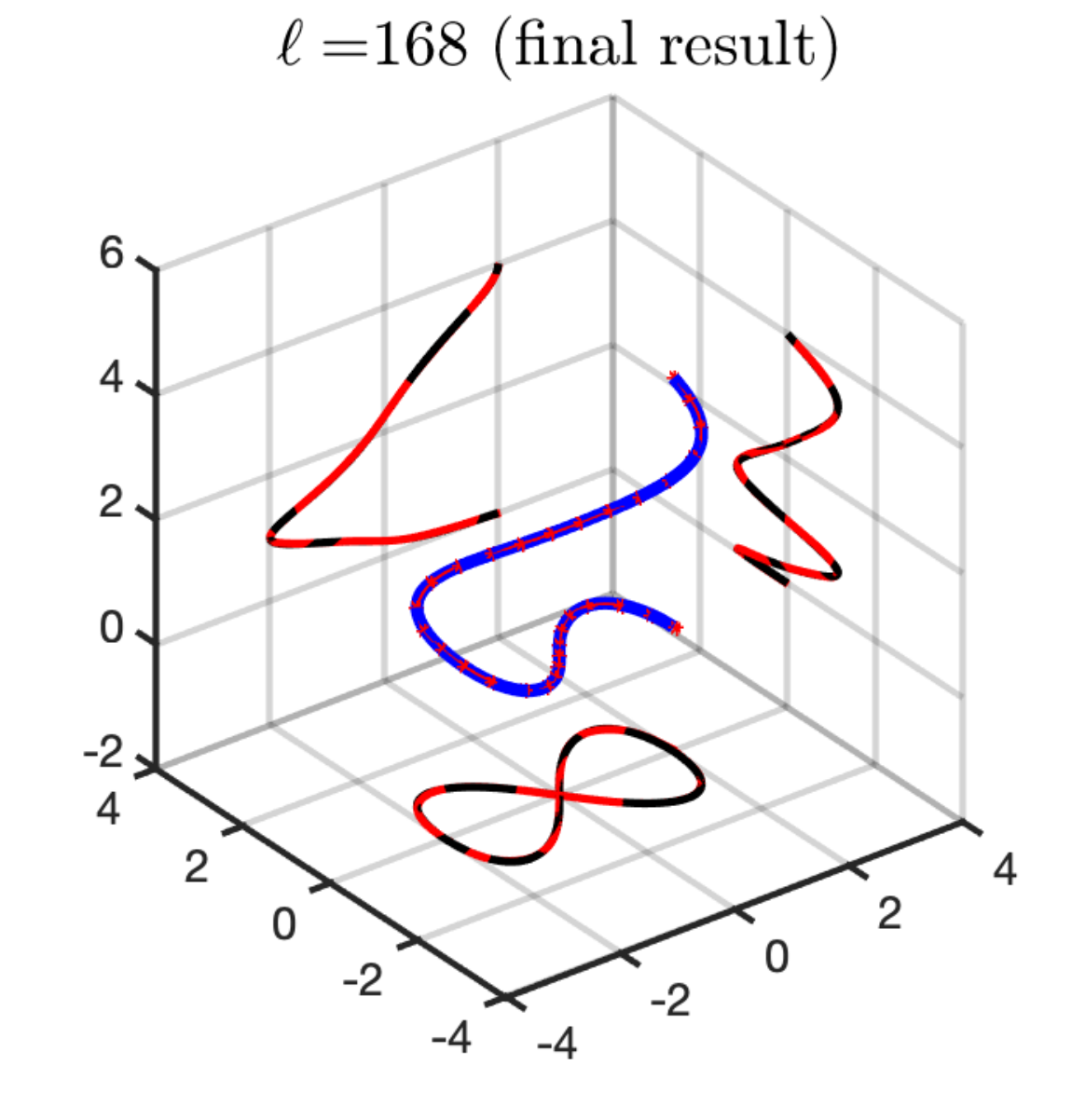}
    \end{subfigure}
    \caption{\small Reconstruction of the thin tubular scatterer from
      Example~\ref{exa:Geometry2}. 
      The top-left plot shows the initial guess, and the
      bottom-right plot shows the final reconstruction.}
    \label{fig:Recon2}
  \end{figure}
  We consider the setting from Example~\ref{exa:Geometry2}. 
  The initial guess is a straight line segment connecting the points 
  $[2,0,0]^\top$ and $[2,2,0]^\top$.
  The reconstruction algorithm stops after $168$ steps.
  The initial guess, some intermediate steps and the final result of
  the reconstruction algorithm are shown in Figure~\ref{fig:Recon2}.
  Again the final reconstruction is very close to the exact center
  curve~$K$. 
  \hfill$\lozenge$
\end{example} 

\begin{example}
  \label{exa:Recon3}
  \begin{figure}[t]
    \centering
    \begin{subfigure}{.32\textwidth}
      \includegraphics[width=1\textwidth]{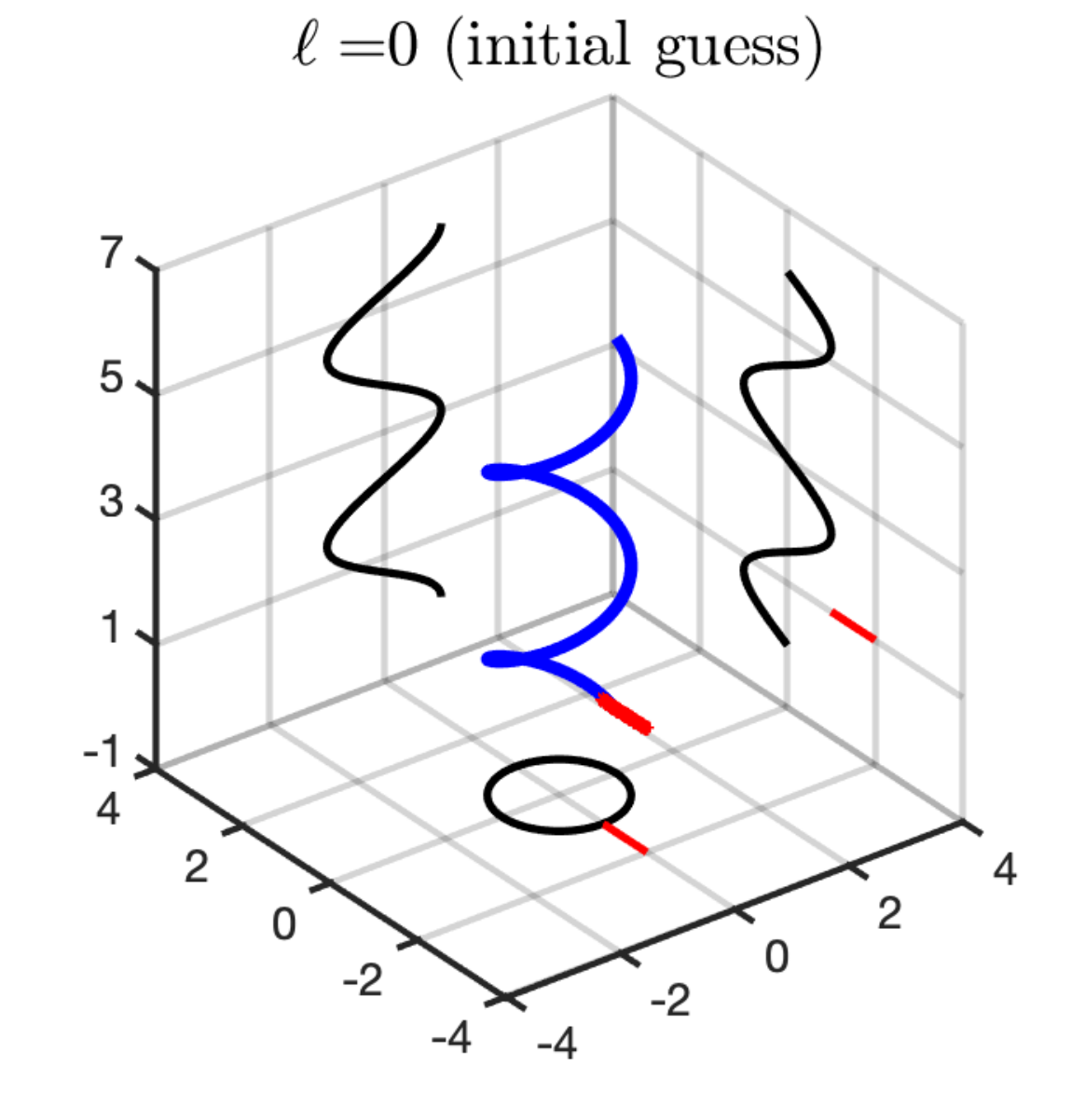}
    \end{subfigure}
    \begin{subfigure}{.32  \textwidth}           
      \includegraphics[width=1.\textwidth]{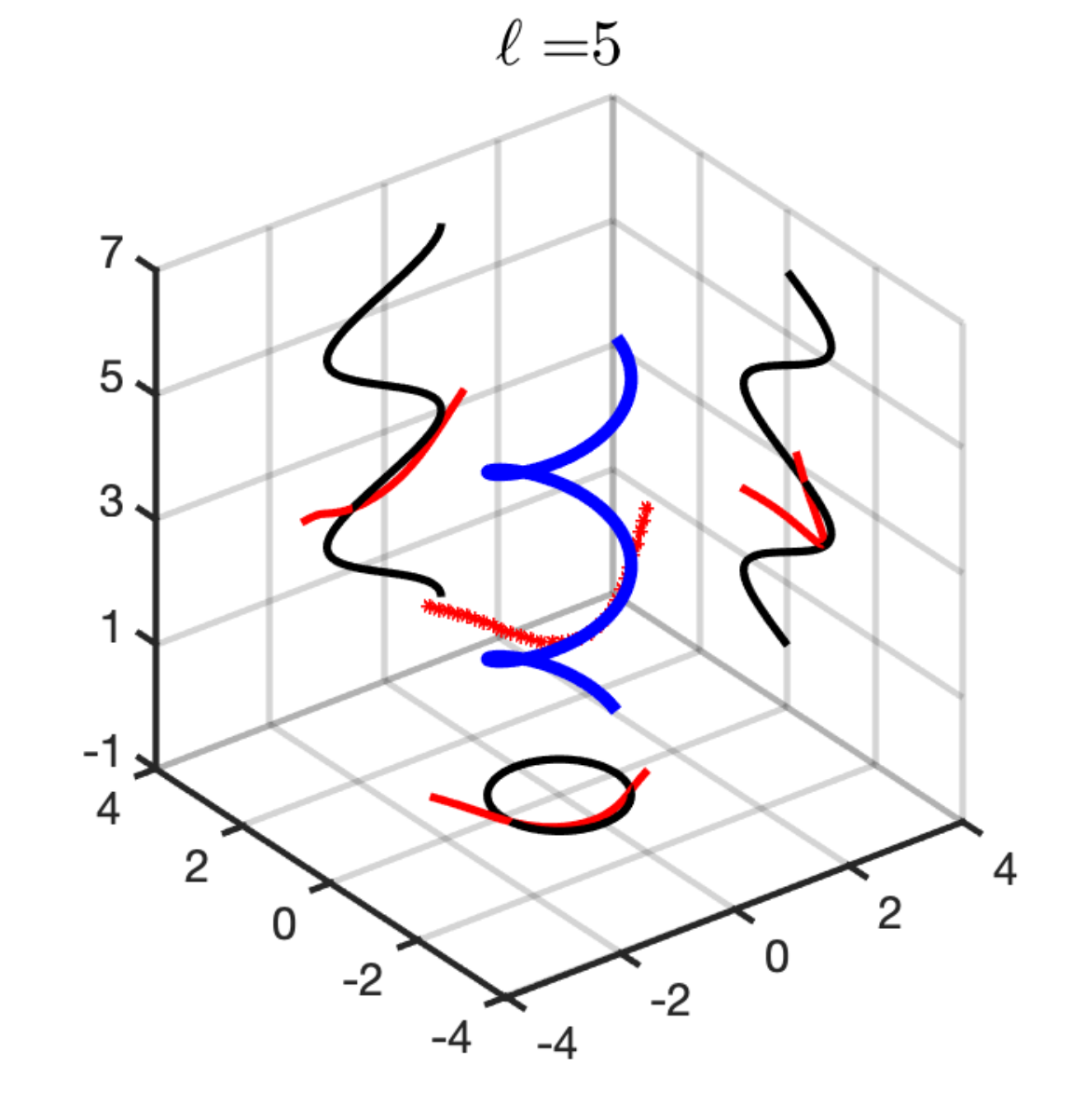}
    \end{subfigure}
    \begin{subfigure}{.32\textwidth}
      \includegraphics[width=1.\textwidth]{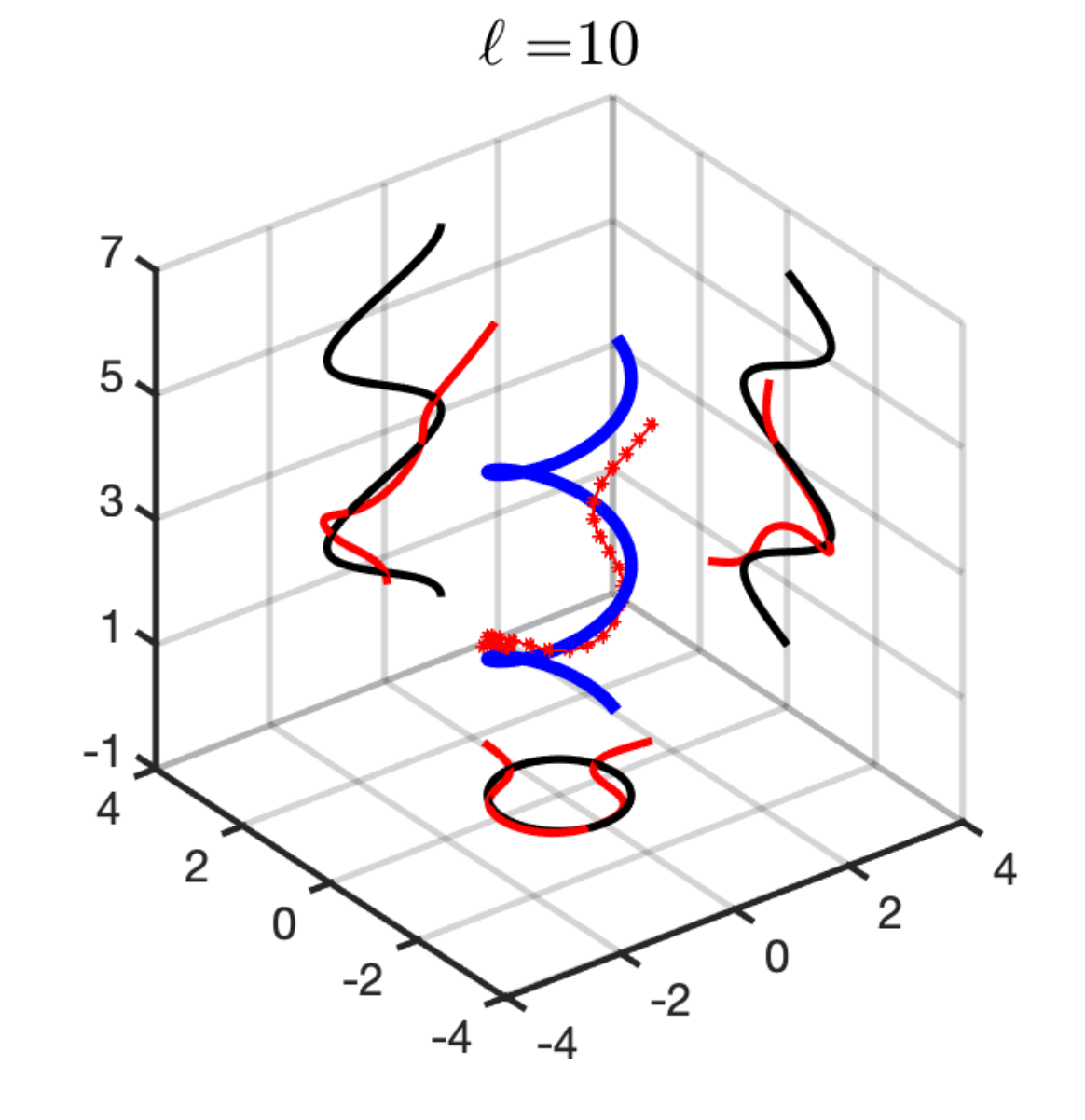}
    \end{subfigure}
    \begin{subfigure}{.32\textwidth}
      \includegraphics[width=1.\textwidth]{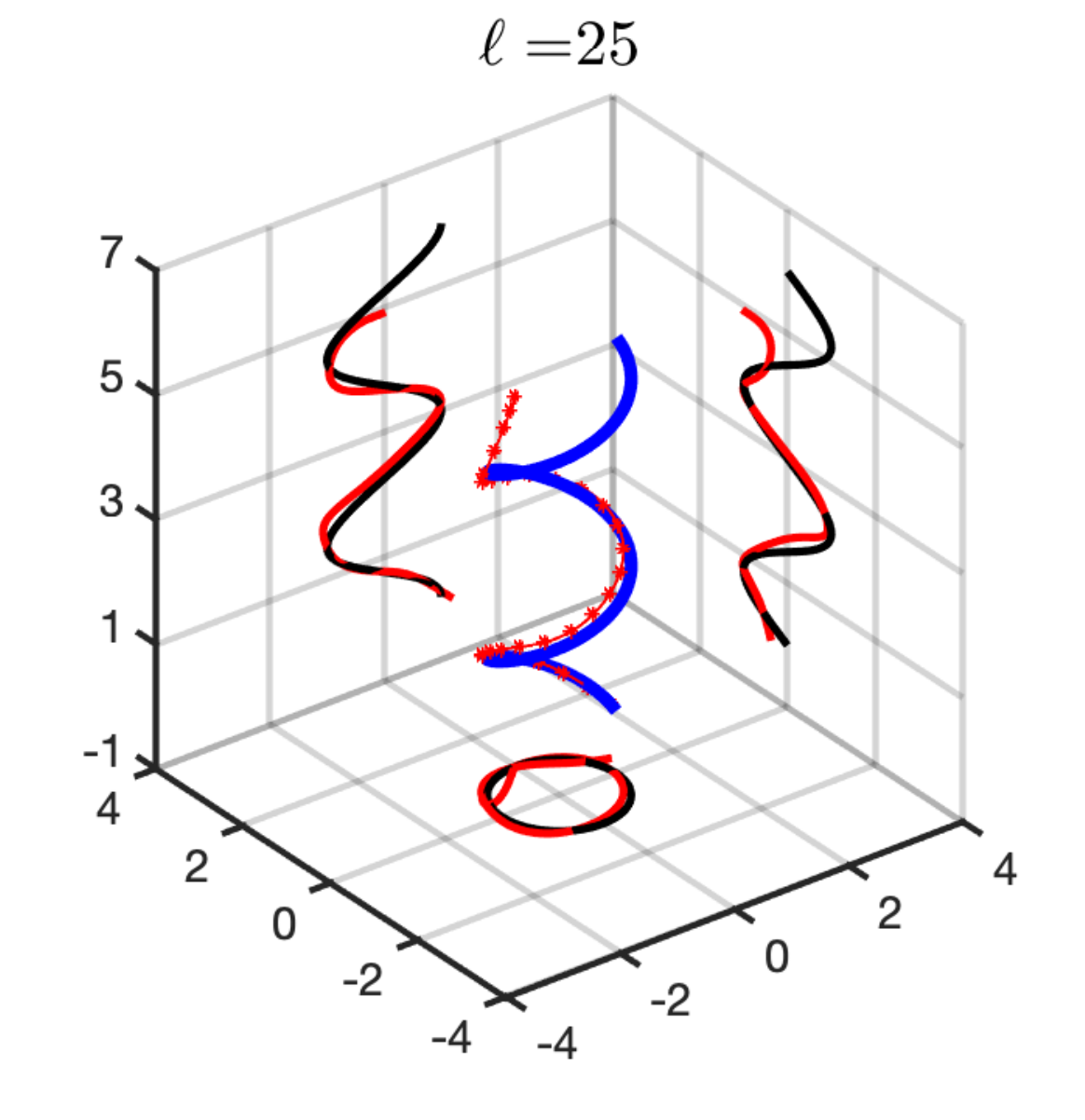}
    \end{subfigure}
    \begin{subfigure}{.32\textwidth}
      \includegraphics[width=1.\textwidth]{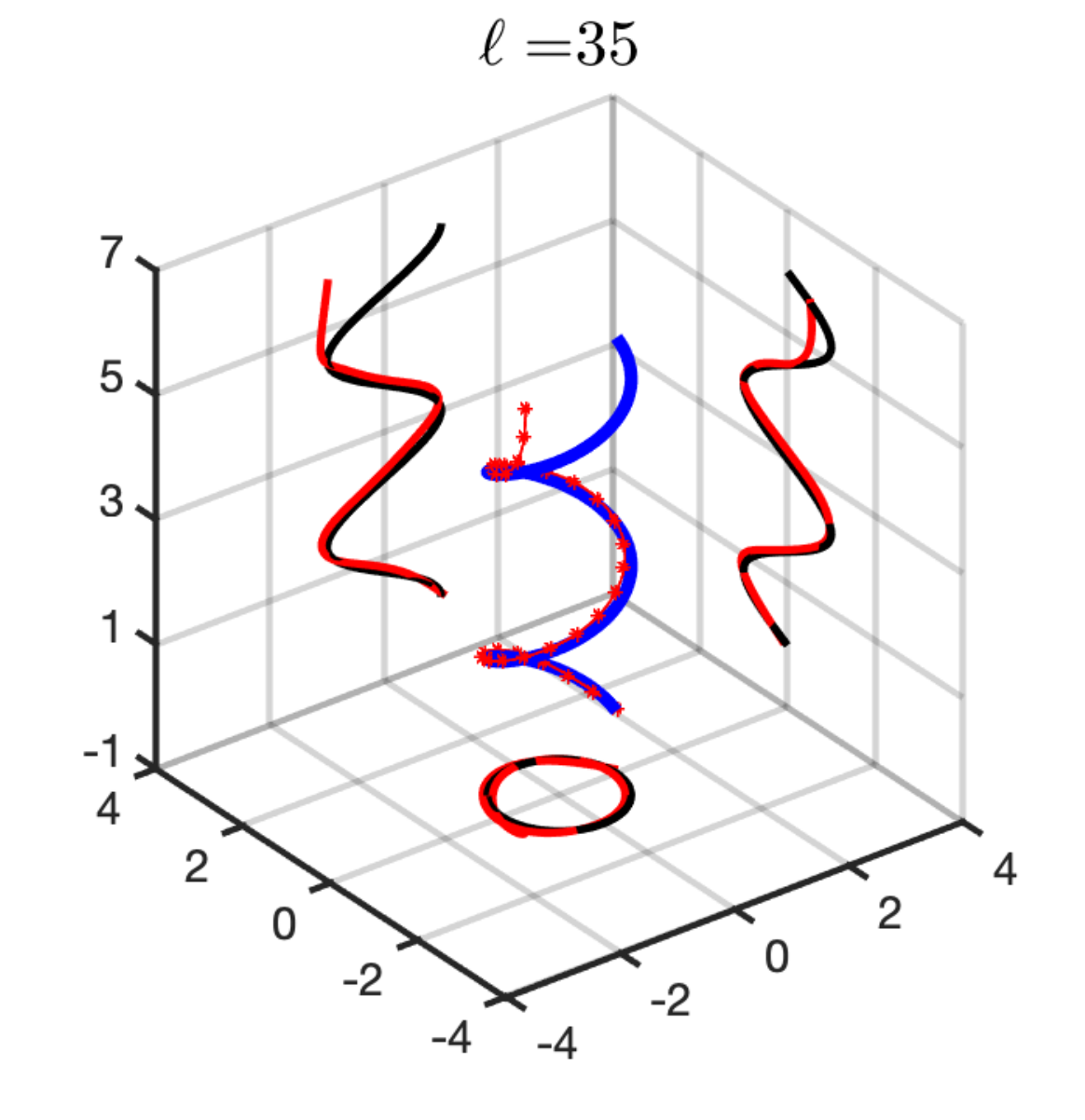}
    \end{subfigure}
    \begin{subfigure}{.32\textwidth}
      \includegraphics[width=1.\textwidth]{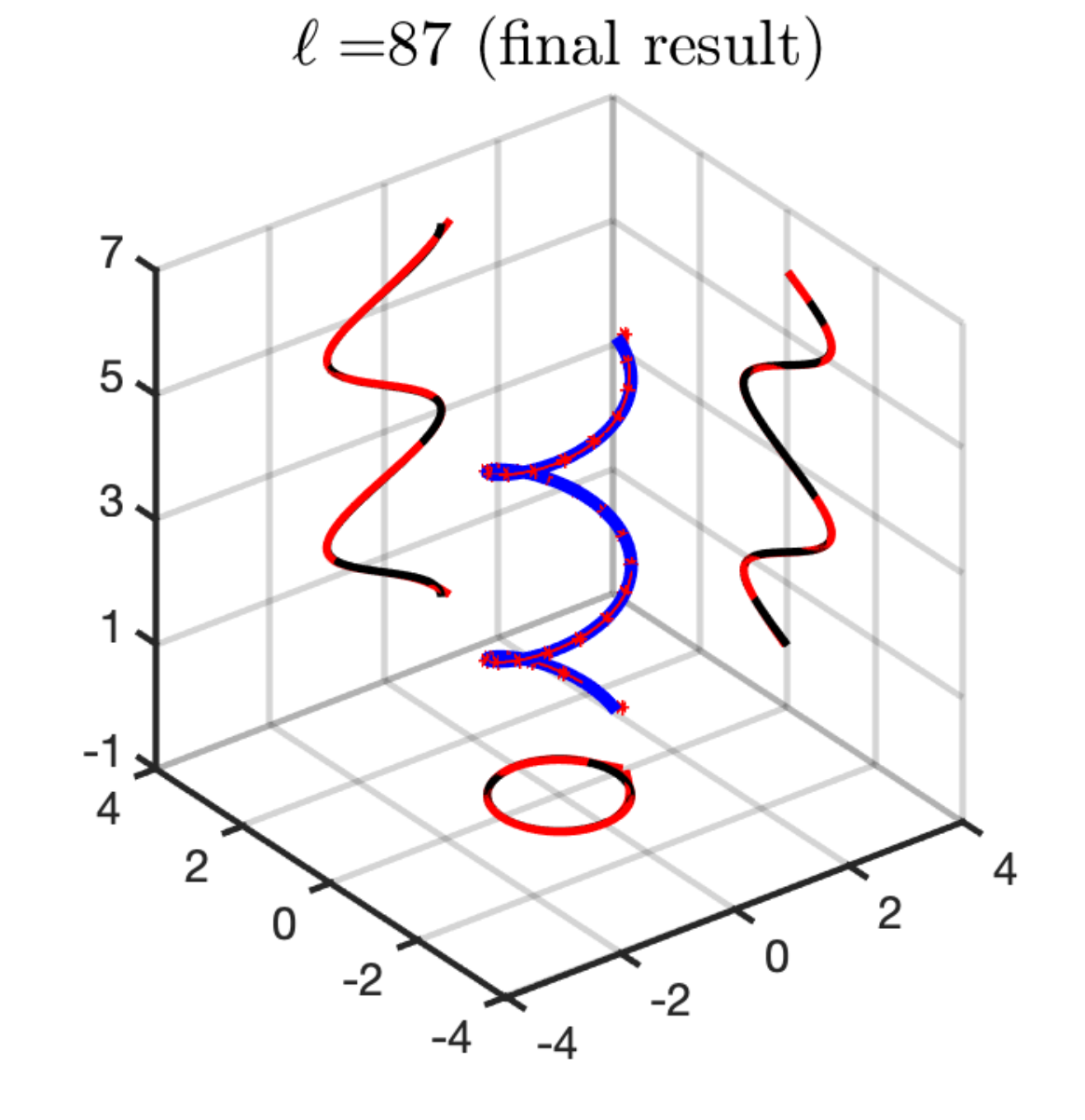}
    \end{subfigure}
    \caption{\small Reconstruction of the helical scatterer from
      Example~\ref{exa:Geometry3}. 
      The top-left plot shows the initial guess, and the
      bottom-right plot shows the final reconstruction.}
    \label{fig:Recon3}
  \end{figure}
  We consider the setting from Example~\ref{exa:Geometry3}. 
  The initial guess is a straight line segment connecting the points 
  $[0,-1,1]^\top$ and $[0,-2,1]^\top$.
  The reconstruction algorithm stops after~$87$ steps.
  The initial guess, some intermediate steps and the final result of
  the reconstruction algorithm are shown in Figure~\ref{fig:Recon3}.
  As in the previous examples, the final reconstruction is very close
  to the exact center curve $K$.
  \hfill$\lozenge$
\end{example}

In all three examples Algorithm~\ref{alg:reconstruction} provides
accurate approximations to the center curve $K$ of the unknown
scattering object $\Drho$. 
However, a suitable choice of the regularization parameters 
$\alpha_1$ and $\alpha_2$, and an initial guess $\bfp_{\tri}$
sufficiently close to the unknown center curve $K$ are crucial for a
successful reconstruction.

In our final example we study the sensitivity of the reconstruction
algorithm to noise in the far field data.

\begin{example}
  \label{exa:ReconNoisy}
  \begin{figure}[t]
    \centering
    \begin{subfigure}{.32\textwidth}
      \includegraphics[width=1.\textwidth]{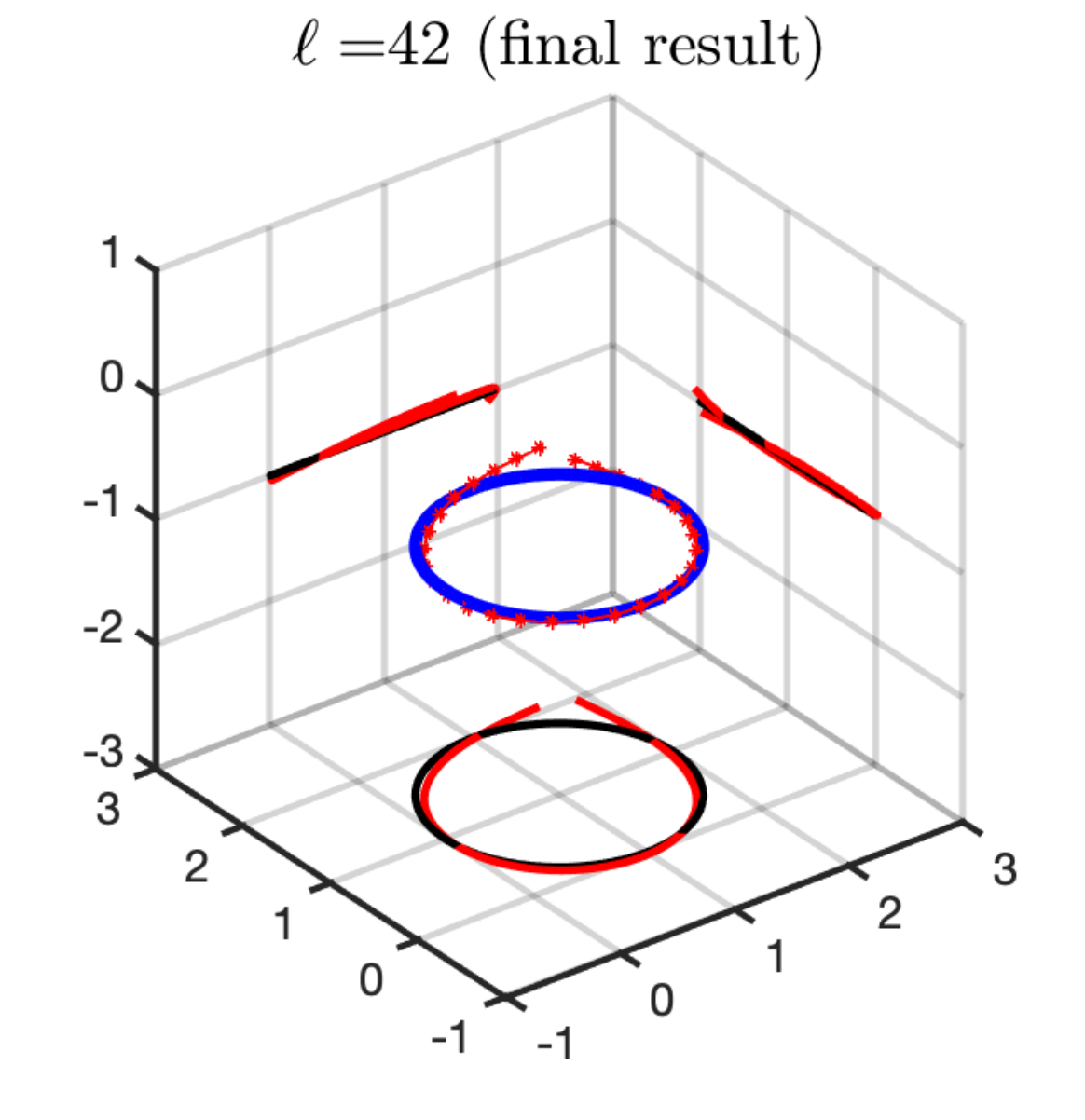}
    \end{subfigure}
    \begin{subfigure}{.32\textwidth}
      \includegraphics[width=1.\textwidth]{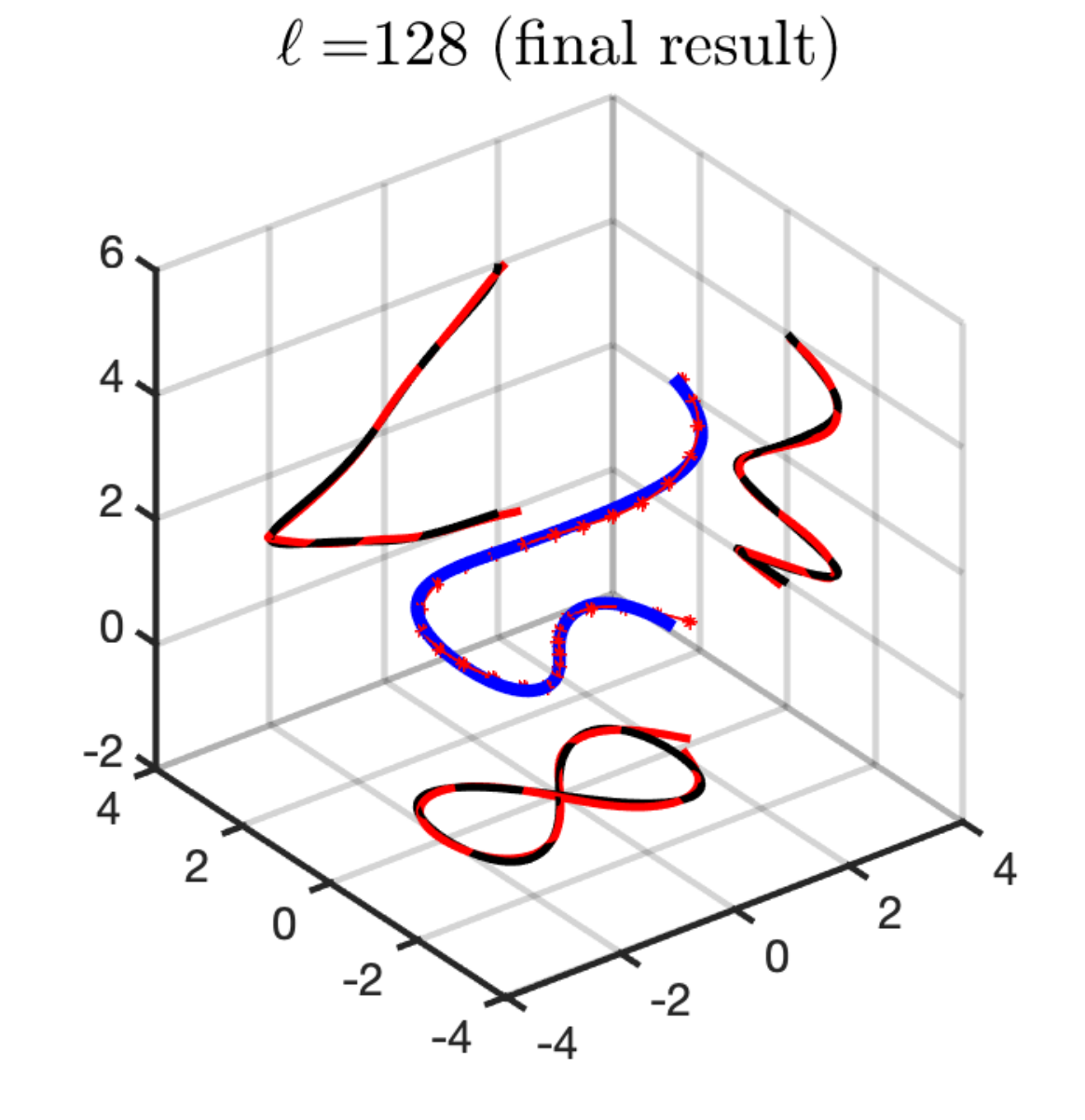}
    \end{subfigure}
    \begin{subfigure}{.32\textwidth}
      \includegraphics[width=1.\textwidth]{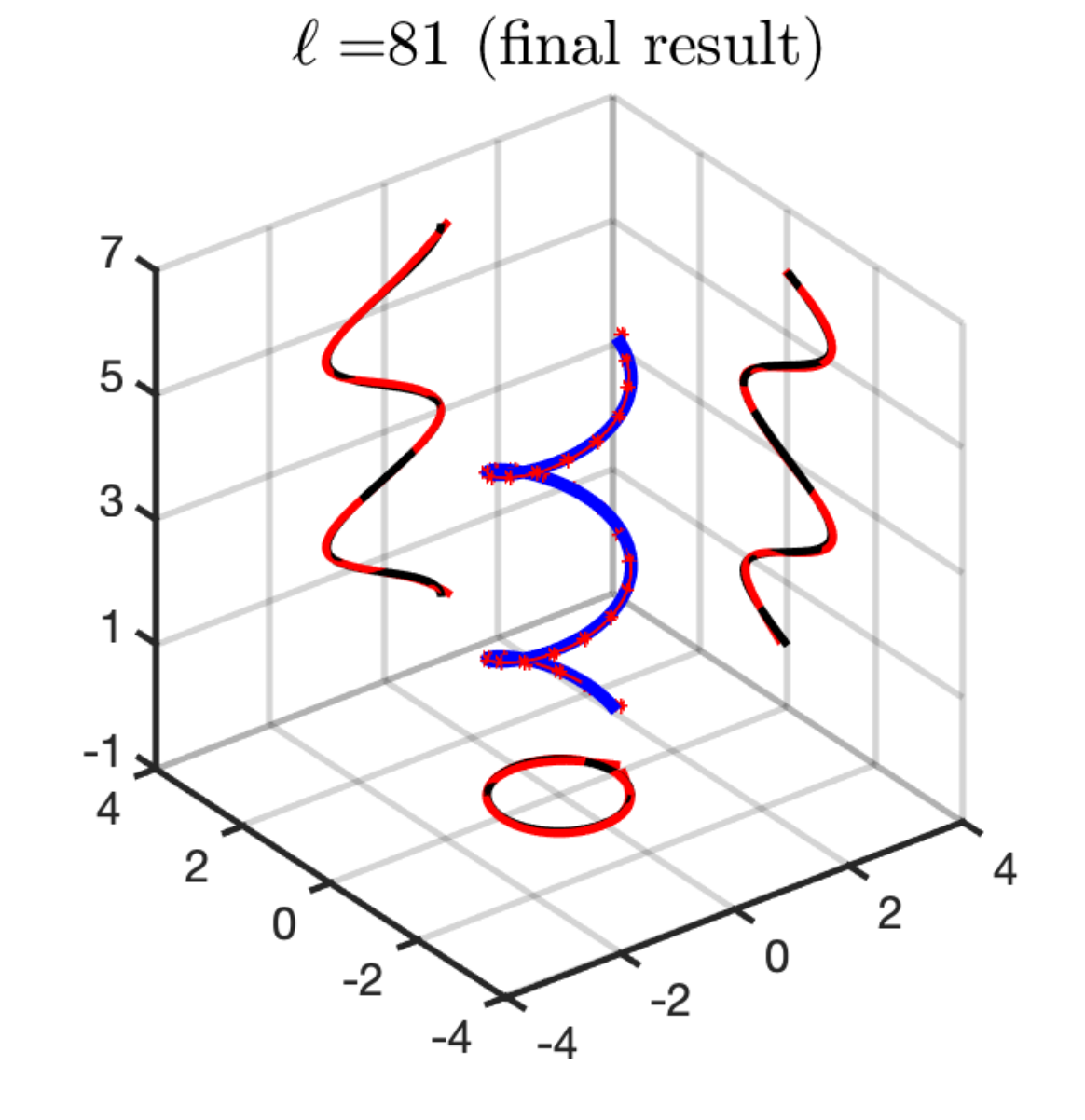}
    \end{subfigure}
    \caption{\small Reconstructions from noisy data with $30\%$ uniformly
      distributed additive noise.} 
    \label{fig:ReconNoisy}
  \end{figure}
  We repeat the previous computations but we add $30\%$ complex-valued
  uniformly distributed error to the electric far field patterns
  $\Einftyrho$ that have been simulates using Bempp.  
  We use the same initial guesses and the same initial values for the
  regularization parameters $\alpha_1$ and $\alpha_2$ as in
  Examples~\ref{exa:Recon1}--\ref{exa:Recon3}. 
  The three plots in Figure~\ref{fig:ReconNoisy} show the exact center
  curves (solid blue), the final reconstructions (solid red with
  stars), and the projections of these curves onto the coordinate
  planes.  
  The reconstruction algorithm stops after $42$ iterations for
  Example~\ref{exa:Recon1}, after $128$ iterations for
  Example~\ref{exa:Recon2}, and after $81$ iterations for
  Example~\ref{exa:Recon3}, respectively. 
  Despite the relatively high noise level, the final reconstructions
  are still very close to the exact center curves $K$. 
  
  We note that Algorithm~~\ref{alg:reconstruction} incorporates all
  available a priori information about the radius $\rho$ and the shape
  of the cross-section of the unknown scatterer, and its material
  parameters $\eps_r$ and $\mu_r$. 
  Furthermore it reconstructs a relatively low number of control
  points corresponding to the spline approximation~$\bfp_\tri$ of the
  center curve $K$ of the unknown thin tubular scattering object
  $\Drho$. 
  We also have carefully regularized the output least squares
  functional $\Phi$ in \eqref{eq:RegOLS}. 
  This might explain the good performance of the reconstruction
  algorithm even for rather noisy far field data. 
  \hfill$\lozenge$
\end{example}

\section*{Conclusions}
\label{sec:Conclusions}
The scattered electromagnetic field due to a thin tubular scattering
object in homogeneous free space can be approximated efficiently using
an asymptotic representation formula in terms of the dyadic Green's
function of the background medium, the incident electromagnetic field,
and two polarization tensors that encode the shape and the material
parameters of the thin tubular scatterering object.
In this work we have shown that, for a thin tubular scattering object
with a fixed cross-section that possibly twists along the center
curve, these three-dimensional polarization tensors can be computed
from the parametrization of the center curve and the two-dimensional
polarization tensors of the cross-section. 

For sufficiently regular cross-sections these two-dimensional
polarization tensors can be approximated by solving a two-dimensional
transmission problem for the Laplace equation. 
For ellipsoidal cross-sections explicit formulas are available. 
This gives a very efficient tool to analyze and simulate scattered
fields due to thin tubular structures. 

We have applied these results to develop an efficient iterative
reconstruction method to recover the center curve of a thin tubular
scattering object from far field observations of a single scattered
field. 
Our numerical examples illustrate the accuracy of the asymptotic
perturbation formula and the performance of the iterative
reconstruction method.

\begin{appendix}
  \section*{Appendix}
  \section{Local coordinates}
  \label{app:LocCoords}

  In this section we derive the Jacobian determinant $\JrK$ and
  compute the gradient in the local coordinate system~$\rK$
  from~\eqref{eq:LocalCoordsK}.
  The Frenet-Serret formulas state that the Frenet-Serret
  frame~$(\tK,\nK,\bK)$ from \eqref{eq:FrenetSerretFrame} satisfies 
  \begin{equation*}
    \tKprime \,=\, \kappa\nK \,, \qquad
    \bKprime \,=\, -\tau\nK \,, \qquad
    \nKprime \,=\, \tau\bK-\kappa\tK \,.
  \end{equation*}
  Therewith we find that 
  \begin{equation*}
    \begin{split}
      \frac{\di\rK}{\di s}(s,\eta,\zeta)
      &\,=\, 
      \tK(s) \Bigl( 1-\kappa(s) 
      \begin{bmatrix}
        1&0
      \end{bmatrix}
      \Rtheta(s)
      \begin{bmatrix}
        \eta\\\zeta
      \end{bmatrix}\Bigr)\\
      &\phantom{\,=\,}
      +
      \begin{bmatrix}
        \nK(s) & \bK(s)
      \end{bmatrix}
      \biggl(
      \begin{bmatrix}
        0 & -\tau(s)\\ \tau(s) & 0
      \end{bmatrix}
      \Rtheta(s) + \Rthetaprime(s) \biggr)
      \begin{bmatrix}
        \eta\\\zeta
      \end{bmatrix}
      \,,\\
      \frac{\di\rK}{\di\eta}(s,\eta,\zeta)
      &\,=\, \begin{bmatrix}
        \nK(s) & \bK(s)
      \end{bmatrix}\Rtheta(s)
      \begin{bmatrix}
        1\\0
      \end{bmatrix}\,,\\
      \frac{\di\rK}{\di\zeta}(s,\eta,\zeta)
      &\,=\, \begin{bmatrix}
        \nK(s) & \bK(s)
      \end{bmatrix}\Rtheta(s)
      \begin{bmatrix}
        0\\1
      \end{bmatrix}\,.
    \end{split}
  \end{equation*}
  Accordingly,
  \begin{equation*}
    \JrK(s,\eta,\zeta)
    \,:=\, \det D\rK(s,\eta,\zeta)
    \,=\, 1-\kappa(s)
    \begin{bmatrix}
      1&0
    \end{bmatrix}
    \Rtheta(s)
    \begin{bmatrix}
      \eta\\\zeta
    \end{bmatrix} \,. 
  \end{equation*}

  Furthermore, applying the chain rule we obtain that
  \begin{equation*}
    \begin{split}
      \frac{\di u\circ\rK}{\di s}(s,\eta,\zeta)
      &\,=\, \grad u (s,\eta,\zeta) \cdot \biggl( 
      \tK(s) \Bigl( 1-\kappa(s) 
      \begin{bmatrix}
        1&0
      \end{bmatrix}
      \Rtheta(s)
      \begin{bmatrix}
        \eta\\\zeta
      \end{bmatrix}\Bigr)\\
      &\phantom{\,=\, \grad u (s,\eta,\zeta) \cdot \Bigl(}
      +
      \begin{bmatrix}
        \nK(s) & \bK(s)
      \end{bmatrix}
      \Bigl(
      \begin{bmatrix}
        0 & -\tau(s)\\ \tau(s) & 0
      \end{bmatrix}
      \Rtheta(s) + \Rthetaprime(s) \Bigr)
      \begin{bmatrix}
        \eta\\\zeta
      \end{bmatrix} \biggr) \,,\\
      \frac{\di u\circ\rK}{\di\eta}(s,\eta,\zeta)
      &\,=\, \grad u (s,\eta,\zeta) \cdot \Bigl( 
      \begin{bmatrix}
        \nK(s) & \bK(s)
      \end{bmatrix}\Rtheta(s)
      \begin{bmatrix}
        1\\0
      \end{bmatrix} \Bigr) \,,\\
      \frac{\di u\circ\rK}{\di\zeta}(s,\eta,\zeta)
      &\,=\, \grad u (s,\eta,\zeta) \cdot \Bigl( 
      \begin{bmatrix}
        \nK(s) & \bK(s)
      \end{bmatrix}\Rtheta(s)
      \begin{bmatrix}
        0\\1
      \end{bmatrix} \Bigr) \,.
    \end{split}
  \end{equation*}
  Using the orthogonal decomposition
  \begin{equation*}
    \nabla u 
    \,=\, (\tK\cdot\grad u)\tK + (\nK\cdot\grad u)\nK 
    + (\bK\cdot\grad u)\bK 
  \end{equation*}
  and the notation
  \begin{equation*}
    \crossgrad u\circ\rK 
    \,:=\, \begin{bmatrix}
      \frac{\di u\circ\rK}{\di\eta} &
      \frac{\di u\circ\rK}{\di\zeta}
    \end{bmatrix}^\trans
  \end{equation*}
  for the two-dimensional gradient with respect to $(\eta,\zeta)$, we
  find that the gradient satisfies
  \begin{multline*}
    \grad u(\rK(s,\eta,\zeta))
    \,=\, \JrK^{-1}(s,\eta,\zeta) \biggl(
    \frac{\di u\circ\rK}{\di s} (s,\eta,\zeta)
    + \Bigl(\tau+\thetaprime\Bigr)(s)
    \begin{bmatrix}
      \zeta \\ -\eta
    \end{bmatrix} \cdot \bigl(\crossgrad u\circ\rK\bigr) (s,\eta,\zeta)
    \biggr)\tK(s)\\ 
    + \begin{bmatrix}
      \nK(s) & \bK(s)
    \end{bmatrix}
    \Rtheta(s) 
    \bigl( \crossgrad u\circ\rK\bigr)(s,\eta,\zeta) \,.
  \end{multline*}

  \section{Some estimates}
  \label{app:Estimates}
  In the following we collect some estimates that are used in the
  proof of Proposition~\ref{pro:PolTenWxi} (see also~\cite{CapVog03a}
  for \eqref{eq:Estwq1}--\eqref{eq:Estwq2}). 

  \begin{lemma}
    \label{lmm:Estwrho}
    Suppose $D\tm\Omega\tm\R^d$, $d=2,3$, let $\gamma_0,\gamma_1>0$,
    and let $\gamma\in L^\infty(\Omega)$ be defined by
    \begin{equation*}
      \gamma 
      \,:=\, 
      \begin{cases}
        \gamma_1\,, & \bfx \in D\,,\\
        \gamma_0\,, & \bfx \in \R^d\setminus\overline{D} \,,
      \end{cases}
    \end{equation*}
    Given $\bfF\in L^\infty(\Omega,\R^d)$, we denote by 
    $w\in H^1_0(\Omega)$ the unique solution to
    \begin{equation}
      \label{eq:wrho}
      \div(\gamma \grad w) 
      \,=\, \div(\chi_{D}\bfF) \quad \text{in $\Omega$} \,,\qquad
      w \,=\, 0 \quad \text{on $\di\Omega$} \,.
    \end{equation}
    Then, there exist constants $C,C_p>0$ such that
    \begin{subequations}
       
      \begin{align}
        \|\grad w\|_{L^2(\Omega)} 
        &\,\leq\, C |D|^{\frac12} 
          \|\bfF\|_{L^\infty(D)} \,,\label{eq:Estwq1}\\
        \|w\|_{L^2(\Omega)} 
        &\,\leq\, C |D|^{\frac34}
          \|\bfF\|_{L^\infty(D)} \,, \label{eq:Estwq2}\\
        \|w\|_{W^{1,p}(\Omega)} 
        &\,\leq\, C_p |D|^{\frac1p} 
          \|\bfF\|_{L^\infty(D)} \,, \qquad 1<p<2 \,.\label{eq:Estwq3}
      \end{align}
    \end{subequations}
  \end{lemma}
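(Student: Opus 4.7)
The plan is to establish the three estimates in the order \eqref{eq:Estwq1}, \eqref{eq:Estwq3}, \eqref{eq:Estwq2}. The first is a standard energy estimate; the second uses Meyers' higher-integrability theory for divergence-form elliptic equations with bounded measurable coefficients; the third then follows from the second by a Sobolev embedding followed by Hölder's inequality on the bounded domain $\Omega$.

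For \eqref{eq:Estwq1}, I would test the weak formulation
\[
  \int_\Omega \gamma\, \grad w \cdot \grad v \dx
  \,=\, \int_D \bfF \cdot \grad v \dx
  \qquad \text{for all } v \in H^1_0(\Omega)
\]
of \eqref{eq:wrho} with $v = w$, apply the Cauchy-Schwarz inequality on the right-hand side, and use the coercivity $\gamma \geq \min(\gamma_0, \gamma_1) > 0$ to divide through. For \eqref{eq:Estwq3}, note first that $\|\chi_D \bfF\|_{L^p(\Omega)} \leq |D|^{1/p} \|\bfF\|_{L^\infty(D)}$. Then I would invoke the Meyers-type $L^p$-regularity theory for divergence-form elliptic equations with bounded measurable coefficients (as used, for instance, in \cite{CapVog03a} in this exact setting) to conclude $\|\grad w\|_{L^p(\Omega)} \leq C_p \|\chi_D \bfF\|_{L^p(\Omega)}$ for $1<p<2$. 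Poincaré's inequality on the bounded domain $\Omega$ upgrades this gradient bound to the full $W^{1,p}$ estimate.

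For \eqref{eq:Estwq2}, I would apply \eqref{eq:Estwq3} with $p = 4/3$. In both dimensions $d\in\{2,3\}$, the Sobolev embedding $W^{1,4/3}(\Omega) \hookrightarrow L^q(\Omega)$ holds with $q=4$ for $d=2$ and $q=12/5$ for $d=3$, so that in each case $q > 2$. Since $|\Omega| < \infty$, Hölder's inequality then yields
\[
  \|w\|_{L^2(\Omega)}
  \,\leq\, |\Omega|^{\frac12-\frac1q}\, \|w\|_{L^q(\Omega)}
  \,\leq\, C\, \|w\|_{W^{1,4/3}(\Omega)}
  \,\leq\, C\, |D|^{3/4}\, \|\bfF\|_{L^\infty(D)} \,,
\]
which is \eqref{eq:Estwq2}. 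The hard part is really \eqref{eq:Estwq3}: the Meyers range around $p=2$ depends on the ellipticity contrast $\max(\gamma_0/\gamma_1,\gamma_1/\gamma_0)$, and one must verify that this range is wide enough to contain $p = 4/3$ (for \eqref{eq:Estwq2}) and $p = 6/5$ (needed in the estimates for $\rrhon^{(2)}$ and $\rrhon^{(3)}$ in the proof of Proposition~\ref{pro:PolTenWxi}); all remaining steps are routine.
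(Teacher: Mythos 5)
Your derivation of \eqref{eq:Estwq1} matches the paper's. Your route to \eqref{eq:Estwq2} — namely, applying \eqref{eq:Estwq3} with $p=4/3$ followed by $W^{1,4/3}(\Omega)\hookrightarrow L^q(\Omega)$ with $q>2$ in both dimensions — is a valid alternative: the paper instead uses a duality argument, testing the equation against the solution $z\in H^1_0(\Omega)$ of the constant-coefficient auxiliary problem $\div(\gamma_0\grad z)=-w$, using $\|z\|_{H^3}\lesssim\|w\|_{H^1}$ and $\|\grad z\|_{L^\infty}\lesssim\|z\|_{H^3}$. The paper's argument is independent of \eqref{eq:Estwq3}, whereas yours is not; both are fine modulo a correct proof of \eqref{eq:Estwq3}.

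The gap is in \eqref{eq:Estwq3}. You appeal to Meyers-type $L^p$-regularity, and you yourself flag that the Meyers exponent window $(2-\varepsilon,2+\varepsilon)$ depends on the ellipticity contrast $\max(\gamma_0/\gamma_1,\gamma_1/\gamma_0)$. For a generic contrast there is no reason this window reaches $p=4/3$, let alone $p=6/5$, and the statement asserts the estimate for \emph{all} $1<p<2$; Meyers on its own cannot deliver that range. The paper avoids this entirely by a perturbation (freezing) argument: rewrite
\begin{equation*}
  \div(\gamma_0\grad w)
  \,=\, \div(\chi_D\bfF) + \div\bigl((\gamma_0-\gamma)\grad w\bigr)
  \qquad\text{in }\Omega
\end{equation*}
and use that $-\div(\gamma_0\grad\,\cdot\,)$ is an isomorphism from $W^{1,p}_0(\Omega)$ onto $W^{-1,p}(\Omega)$ for \emph{every} $1<p<\infty$, since the coefficient is constant. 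Because $\gamma_0-\gamma$ is supported on $\overline{D}$, H\"older gives $\|(\gamma_0-\gamma)\grad w\|_{L^p(\Omega)}\leq C\,|D|^{1/p-1/2}\|\grad w\|_{L^2(\Omega)}$, and inserting \eqref{eq:Estwq1} produces the bound $C_p|D|^{1/p}\|\bfF\|_{L^\infty(D)}$ for every $1<p<2$ with no constraint coming from the contrast. This is the step missing from your sketch; if you replace the appeal to Meyers by this constant-coefficient-plus-perturbation argument, the rest of your plan, including your derivation of \eqref{eq:Estwq2}, goes through.
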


  \begin{proof}
    Using the weak formulation of \eqref{eq:wrho} and H\"older's
    inequality we find that
    \begin{equation*}
      \|\grad w\|_{L^2(\Omega)}^2 
      \,\leq\, C \int_\Omega \gamma \grad w \cdot \grad w \dx
      \,=\, C \int_\Omega \chi_{D} \bfF\cdot \grad w \dx
      \,\leq\, C |D|^{\frac12} \|\grad w\|_{L^2(\Omega)} 
      \|\bfF\|_{L^\infty(D)} \,.
    \end{equation*}
    This gives \eqref{eq:Estwq1}.

    Let $z \in H^1_0(\Omega)$ be the unique solution to
    \begin{equation}
      \label{eq:zrho}
      \div(\gamma_0\grad z) 
      \,=\, -w \quad \text{in $\Omega$}\,, \qquad 
      z \,=\, 0 \quad \text{on $\di\Omega$}\,.
    \end{equation}
    Elliptic regularity results (see, e.g., \cite[Thm.~8.13]{GilTru01}) 
    show that $\|z\|_{H^3(\Omega)}\leq C\|w\|_{H^1(\Omega)}$, and
    Sobolev's embedding theorem (see, e.g., \cite[p.~158]{GilTru01}) 
    gives 
    $\|\grad z\|_{L^\infty(\Omega)} \leq C
    \|z\|_{H^3(\Omega)}$. 
    Using the weak formulations of \eqref{eq:zrho} and \eqref{eq:wrho}
    we find that  
    \begin{equation*}
      \begin{split}
        \|w\|_{L^2(\Omega)}^2
        &\,=\, \int_{\Omega} \gamma_0 \grad z \cdot \grad w \dx
        \,=\, \int_{\Omega} \chi_{D} \bfF \cdot \grad z \dx
        + \int_{\Omega} (\gamma_0-\gamma) \grad z \cdot \grad w
        \dx\\
        &\,\leq\, \bigl( \|\chi_{D} \bfF\|_{L^1(\Omega)}
        + \|(\gamma_0-\gamma) \grad w\|_{L^1(\Omega)} \bigr)
        \|\grad z\|_{L^\infty(\Omega)}\\
        &\,\leq\, C \bigl(
        |D| \|\bfF\|_{L^\infty(D)}
        + |D|^{\frac12} \|\grad w\|_{L^2(\Omega)} \bigr)
        \|w\|_{H^1(\Omega)} \,.
      \end{split}
    \end{equation*}
    Applying Poincare's inequality and \eqref{eq:Estwq1} this shows
    \eqref{eq:Estwq2}. 

    Next we note that
    \begin{equation*}
      \div(\gamma_0\grad w) 
      \,=\, \div(\chi_{D} \bfF) 
      + \div\bigl((\gamma_0-\gamma)\grad w\bigr) \,.
    \end{equation*}
    If $1<p<2$, then the right hand side is in $W^{-1,p}(\Omega)$, and since
    $-\div(\gamma_0\grad \cdot)$ is an isomorphism from~$W^{1,p}_0(\Omega)$
    to $W^{-1,p}(\Omega)$ (see, e.g., \cite[p.~40]{BenLioPap78}), 
    we find using H\"older's inequality and \eqref{eq:Estwq1} that
    \begin{equation*}
      \begin{split}
        \|w\|_{W^{1,p}(\Omega)}
        &\,\leq\, C_p \|\chi_{D} \bfF\|_{L^p(\Omega)}
        + C_p \|(\gamma_0-\gamma)\nabla w\|_{L^p(\Omega)}\\
        &\,\leq\, C_p |D|^{\frac{1}{p}} \|\bfF\|_{L^\infty(D)}
        + C_p |D|^{\frac{1}{p}-\frac12} \|\nabla w\|_{L^2(\Omega)}
        \,\leq\, C_p |D|^{\frac{1}{p}} \|\bfF\|_{L^\infty(D)} \,.
      \end{split}
    \end{equation*}
    This gives \eqref{eq:Estwq3}.
  \end{proof}

  The next lemma is used in the proof of
  Theorem~\ref{thm:CharacterizationPolTen}~(b). 

  \begin{lemma}
    \label{lmm:Caccioppoli}
    Let $0<\rho<r/2$ and let $\Drho'\tm \Brho$ be open, where
    $\Brho\tm \R^2$ denotes the disk of radius $\rho$ around zero.
    Suppose that $A_0,A_1\in C^{0,1}(\Br,\R^{2\times 2})$ are
    symmetric and 
    \begin{equation*}
      c^{-1}
      \,\leq\, \bfxi' \cdot A_j(\bfx') \bfxi'
      \,\leq\, c \qquad
      \text{for all } \bfx'\in\Br \,,\; \bfxi'\in \Stwo \,,
      \text{ and } j=1,2 \,,
    \end{equation*}
    with some constant $c>0$, and let $\bfF\in C^{0,1}(\Br,\R^2)$.
    We define $A_\rho,\Atilde_\rho\in C^{0,1}(\Br,\R^{2\times 2})$ by
    \begin{equation*}
      A_\rho(\bfx') \,:=\,
      \begin{cases}
        A_1\,, & \bfx' \in \Drho\,,\\
        A_0\,, & \bfx' \in \Br\setminus\overline{\Drho} \,,
      \end{cases}
      \qquad\text{and}\qquad
      \Atilde_\rho(\bfx') \,:=\,
      \begin{cases}
        A_1(0)\,, & \bfx' \in \Drho\,,\\
        A_0(0)\,, & \bfx' \in \Br\setminus\overline{\Drho} \,,
      \end{cases}
    \end{equation*}
    and we consider the unique solutions 
    $\wrho,\wrhotilde\in H^1_0(\Br)$ to 
    \begin{subequations}
       
      \begin{align}
        \div(A_\rho\wrho) 
        &\,=\, \div(\chi_{\Drho} \bfF) 
          \quad \text{in } \Br \,, \qquad 
          \wrho = 0 \quad \text{on } \di\Br \,, \label{eq:Cacc-wrho}\\
        \div(\Atilde_\rho\wrhotilde) 
        &\,=\, \div(\chi_{\Drho} \bfF) 
          \quad \text{in } \Br \,, \qquad 
          \wrhotilde = 0 \quad \text{on } \di\Br \,.
      \end{align}
    \end{subequations}
    Then,
    \begin{equation}
      \label{eq:Caccioppoli}
      \bigl\| \grad \wrho - \grad \wrhotilde \bigr\|_{L^2(\Br)}
      \,=\, o(|\Drho|^{\frac12}) \qquad \text{as } \rho\to 0 \,.
    \end{equation}
  \end{lemma}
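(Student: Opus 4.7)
My plan is to reduce the estimate to a weighted energy integral involving $\wrhotilde$ and then to exploit both the Lipschitz continuity of $A_0,A_1$ at the origin and a Caccioppoli-type bound on the annular region where $\wrhotilde$ satisfies a homogeneous constant-coefficient equation.

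First I would set $v_\rho := \wrho - \wrhotilde \in H^1_0(\Br)$, which satisfies
\begin{equation*}
  \div\bigl(A_\rho \grad v_\rho\bigr)
  \,=\, \div\bigl( (\Atilde_\rho - A_\rho) \grad \wrhotilde \bigr)
  \quad \text{in } \Br \,, \qquad
  v_\rho \,=\, 0 \quad \text{on } \di\Br \,.
\end{equation*}
Testing with $v_\rho$ and using the uniform ellipticity of $A_\rho$ yields
\begin{equation*}
  \bigl\|\grad v_\rho\bigr\|_{L^2(\Br)}
  \,\leq\, C \bigl\| (\Atilde_\rho - A_\rho) \grad \wrhotilde \bigr\|_{L^2(\Br)} \,.
\end{equation*}
Since $A_0,A_1 \in C^{0,1}(\Br,\R^{2\times 2})$, one has $|\Atilde_\rho(\bfx') - A_\rho(\bfx')| \leq L|\bfx'|$ uniformly in $\Br$, so \eqref{eq:Caccioppoli} reduces to proving
\begin{equation*}
  I_\rho \,:=\, \int_{\Br} |\bfx'|^2 \bigl|\grad \wrhotilde(\bfx')\bigr|^2 \dif \bfx'
  \,=\, o(|\Drho|) \qquad \text{as } \rho\to 0 \,.
\end{equation*}

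To handle $I_\rho$ I would split $\Br$ at the intermediate scale $\rho^\alpha$ for some fixed $\alpha \in (0,1/2)$, observing that for $\rho$ sufficiently small one has $\Drho \subset \Brho \subset B_{2\rho^\alpha}'(0) \Subset \Br$. On the inner ball $B_{2\rho^\alpha}'(0)$ we use $|\bfx'|^2 \leq 4\rho^{2\alpha}$ together with $\|\grad\wrhotilde\|_{L^2(\Br)} \leq C|\Drho|^{1/2}$ from Lemma~\ref{lmm:Estwrho}, contributing at most $C \rho^{2\alpha} |\Drho| = o(|\Drho|)$. On the outer annulus $\Br \setminus B_{2\rho^\alpha}'(0)$ the coefficient matrix is frozen at $A_0(0)$ and the source vanishes, so $\wrhotilde$ solves $\div(A_0(0) \grad \wrhotilde) = 0$ there. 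Choosing a cutoff $\phi \in C_c^\infty(\Br)$ equal to $1$ on $\Br \setminus B_{2\rho^\alpha}'(0)$, vanishing on $B_{\rho^\alpha}'(0)$, and satisfying $|\grad \phi| \leq C \rho^{-\alpha}$, and testing the homogeneous equation against $\phi^2 \wrhotilde$ via the standard absorption argument produces the Caccioppoli-type bound
\begin{equation*}
  \bigl\|\grad\wrhotilde\bigr\|_{L^2(\Br \setminus B_{2\rho^\alpha}'(0))}
  \,\leq\, C \rho^{-\alpha} \|\wrhotilde\|_{L^2(\Br)}
  \,\leq\, C \rho^{-\alpha} |\Drho|^{3/4} \,,
\end{equation*}
where the last step is again from Lemma~\ref{lmm:Estwrho}. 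Bounding $|\bfx'|^2 \leq r^2$ on this annulus and using $|\Drho|^{1/2} \leq \sqrt{\pi}\,\rho$ yields a second contribution of order $\rho^{-2\alpha} |\Drho|^{3/2} \leq C \rho^{1-2\alpha} |\Drho|$, which is $o(|\Drho|)$ for every $\alpha \in (0,1/2)$.

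The principal technical point is the Caccioppoli step: one must keep the support of the cutoff $\phi$ strictly outside $\Drho$ so that the homogeneous constant-coefficient equation is valid on the test region, which is precisely why the threshold is introduced at the intermediate scale $\rho^\alpha$ rather than at the natural scale $\rho$ of the inclusion. Everything else is a direct consequence of the energy estimates already recorded in Lemma~\ref{lmm:Estwrho} combined with the Lipschitz continuity of $A_0$ and $A_1$ at the origin.
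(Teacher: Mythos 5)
Your proof is correct and follows essentially the same route as the paper: an energy estimate for the difference, a splitting at an intermediate scale $\rho^\alpha$ (the paper fixes $\alpha=1/4$), Lipschitz smallness of $\Atilde_\rho-A_\rho$ near the origin for the inner contribution, and a cutoff (Caccioppoli) bound combined with $\|\wrhotilde\|_{L^2(\Br)}\leq C|\Drho|^{3/4}$ for the outer contribution; the only cosmetic difference is that you carry the perturbation term on $\grad\wrhotilde$, whereas the paper carries it on $\grad\wrho$. One small slip: a cutoff cannot be both in $C_c^\infty(\Br)$ and equal to $1$ on $\Br\setminus B_{2\rho^{\alpha}}'(0)$; as with the paper's $\hrho$, it suffices to take it Lipschitz up to $\di\Br$ and equal to $1$ there, since $\phi^2\wrhotilde\in H^1_0(\Br)$ in any case.
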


  \begin{proof}
    Using \eqref{eq:Cacc-wrho} we find that
    \begin{equation*}
      \div(\Atilde_\rho \grad \wrho)
      \,=\, \div(\chi_{\Drho} \bfF(0)) 
      + \div\bigl( \chi_{\Drho} (\bfF-\bfF(0)) \bigr)
      + \div\bigl( (\Atilde_\rho-A_\rho) \grad\wrho \bigr)
      \qquad \text{in } \Br \,.
    \end{equation*}
    Therefore, introducting $\Omega' := B_{\rho^{1/4}}'(0)$ we can
    write $\wrho = \wrhotilde + v_1 + v_2 + v_3$, where 
    $v_1,v_2,v_3\in H^1_0(\Br)$ are the unique solutions to
    \begin{subequations}
       
      \begin{align}
        \div(\Atilde_\rho v_1) 
        &\,=\, \div\bigl( \chi_{\Drho} (\bfF-\bfF(0)) \bigr)
        &\text{in } \Br \,, 
        &&v_1 = 0 \quad \text{on } \di\Br \,,\label{eq:Cacc-v1}\\
        \div(\Atilde_\rho v_2) 
        &\,=\, \div\bigl( \chi_{\Omega'} (\Atilde_\rho-A_\rho) 
          \grad\wrho \bigr)
        &\text{in } \Br \,, 
        &&v_2 = 0 \quad \text{on } \di\Br \,, \label{eq:Cacc-v2}\\
        \div(\Atilde_\rho v_3) 
        &\,=\, \div\bigl( (1-\chi_{\Omega'}) (\Atilde_\rho-A_\rho) 
          \grad\wrho \bigr)
        &\text{in } \Br \,, 
        &&v_3 = 0 \quad \text{on } \di\Br \,. \label{eq:Cacc-v3}
      \end{align}
    \end{subequations}
    Using \eqref{eq:Estwq1} and the Lipschitz continuity of $\bfF$
    we find that
    \begin{equation}
      \label{eq:Cacc-Estv1}
      \|\grad v_1\|_{L^2(\Br)}
      \,\leq\, C \| \bfF-\bfF(0) \|_{L^\infty(\Drho')} |\Drho'|^{\frac12}
      \,\leq\, C \rho |\Drho'|^{\frac12}
      \,=\, o(|\Drho'|^{\frac12}) \,.
    \end{equation}
    Similarly, the well-posedness of \eqref{eq:Cacc-v2},
    \eqref{eq:Estwq1}, and the Lipschitz continuity of $A_0$ and
    $A_1$ show that
    \begin{equation}
      \label{eq:Cacc-Estv2}
      \begin{split}
        \|\grad v_2\|_{L^2(\Br)}
        &\,\leq\, C \bigl\| (\Atilde_\rho-A_\rho) 
        \chi_{\Omega'} \bigl\|_{L^\infty(\Br)} 
        \|\grad\wrho\|_{L^2(\Br)}\\
        &\,\leq\, C \Bigl( \|A_1-A_1(0)\|_{L^\infty(\Drho')} 
        + \|A_0-A_0(0)\|_{L^\infty(\Omega')} \Bigr) 
        |\Drho'|^{\frac12}\\
        &\,\leq\, C \bigl(\rho+\rho^{\frac14}\bigr) |\Drho'|^{\frac12}
        \,=\, o(|\Drho'|^{\frac12}) \,.
      \end{split}
    \end{equation}
    Next let $\hrho\in C^1([0,r])$ be a cut-off function satisfying 
    \begin{equation}
      \label{eq:Propertieshrho}
      0\leq \hrho\leq 1 \,,\qquad 
      \chi_{(0,\,\rho^{\frac12})} \hrho \,=\, 0\,,\qquad
      \chi_{(\rho^{\frac14},\,1)} \hrho 
      \,=\, \chi_{(\rho^{\frac14},\,1)}\,, \qquad
      \|\hrho'\|_{L^\infty((0,\,r))} \,\leq\, C \rho^{-\frac14} \,.
    \end{equation}
    (see \cite[Lmm.~3.6]{BerCapdeGFra09} for a similar construction).
    Using the weak formulation of \eqref{eq:Cacc-wrho} and integrating
    by parts shows that
    \begin{equation*}
      \begin{split}
        0 
        &\,=\, \int_{\Drho'} \bfF \cdot \grad (\hrho^2\wrho) \dx'
        \,=\, \int_{\Br} A_\rho \grad\wrho 
        \cdot \grad(\hrho^2\wrho) \dx'\\
        &\,=\, \int_{\Br} A_\rho \grad\wrho 
        \cdot \bigl( \hrho \grad(\hrho\wrho) 
        + \hrho\wrho \grad\hrho \bigr) \dx'\\
        &\,=\, \int_{\Br} A_\rho \grad(\hrho\wrho)
        \cdot \grad(\hrho\wrho) \dx'
        - \int_{\Br} A_\rho \wrho^2 \grad\hrho \cdot \grad\hrho \dx' \,.
      \end{split}
    \end{equation*}
    Accordingly,
    \begin{equation*}
      \| \grad(\hrho\wrho) \|_{L^2(\Br)}^2
      \,\leq\, C \|\grad\hrho\|_{L^\infty(\Br)}^2 
      \| \wrho \|_{L^2(\Br)}^2 \,,
    \end{equation*}
    and applying \eqref{eq:Propertieshrho} and \eqref{eq:Estwq2} 
    gives 
    \begin{equation*}
      \| \grad(\hrho\wrho) \|_{L^2(\Br)}
      \,\leq\, C \rho^{-\frac14}
      |\Drho'|^{\frac34} 
      \,\leq\, C |\Drho'|^{-\frac18} |\Drho'|^{\frac34} 
      \,=\, o(|\Drho'|^{\frac12}) \,,
    \end{equation*}
    where we used that $\Drho'\tm\Brho$ and thus 
    $|\Drho'|\leq \pi \rho^2$.
    Combining \eqref{eq:Cacc-v2} with \eqref{eq:Estwq1} we obtain that
    \begin{equation}
      \label{eq:Cacc-Estv3}
      \|v_2\|_{L^2(\Br)}
      \,\leq\, C \| \grad\wrho \|_{L^2(\Br\setminus\ol{\Omega'})}
      \,\leq\, C \| \grad(\hrho\wrho) \|_{L^2(\Br)}
      \,=\, o(|\Drho'|^{\frac12}) \,.
    \end{equation}
    Finally, \eqref{eq:Cacc-Estv1}, \eqref{eq:Cacc-Estv2}, and
    \eqref{eq:Cacc-Estv3} give \eqref{eq:Caccioppoli}.
  \end{proof}

\end{appendix}

\section*{Acknowledgments}
Funded by the Deutsche Forschungsgemeinschaft (DFG, German Research
Foundation) -- Project-ID 258734477 -- SFB 1173.

{\small

}

\end{document}